\newtheorem{prop}{Proposition}
\newtheorem{lemma}{Lemma}
\newtheorem{lem}{Lemma}
\newtheorem{theorem}{Theorem}
\newtheorem{corollary}{Corollary}
\newtheorem{cor}{Corollary}
\newtheorem{definition}{Definition}
\newtheorem{claim}{Claim}
\newtheorem{remark}{Remark}
\newtheorem{thm}{Theorem}
\newtheorem{defn}[thm]{Definition}
\newcommand{\bP}{\mathbb{P}}
\newcommand{\eproof}{\begin{flushright} $\square$ \end{flushright}}
\let\eproof\endproof 
\newcommand{\lie}{\mathfrak}
\providecommand{\bracket}[1]{\left \langle #1 \right \rangle}
\newcommand\zkn[1][k]{Z_{#1}^{(n)}}
\DeclareMathOperator{\SU}{SU} \DeclareMathOperator\sll{sl}
\newcommand{\tr}{\mathop{\fam0 Tr}\nolimits}
\newcommand{\cO}{{\mathcal O}}
\newcommand{\Hom}{\mathop{\fam0 Hom}\nolimits}
\newcommand{\End}{\mathop{\fam0 End}\nolimits}
\newcommand{\vol}{\mathop{\fam0 Vol}\nolimits}
\newcommand{\id}{\mathop{\fam0 Id}\nolimits}
\newcommand{\diag}{\mathop{\fam0 diag}\nolimits}
\newcommand{\Aut}{\mathop{\fam0 Aut}\nolimits}
\newcommand{\Id}{\mathop{\fam0 Id}\nolimits}
\newcommand{\bC}{{\mathbb C}}
\newcommand{\bR}{{\mathbb R}}
\newcommand{\bT}{{\bar T}}
\newcommand{\Z}{{\mathbb Z}}
\newcommand{\bZ}{\Z{}}
\newcommand{\D}{{\mathcal D}}
\newcommand{\ra}{\mathop{\fam0 \rightarrow}\nolimits}
\newcommand{\dbar}{ \bar \partial}
\newcommand{\cH}{ {\mathcal H}}
\newcommand{\T}{ {\mathcal T}}
\newcommand{\V}{ {\mathcal V}}\newcommand{\N}{ {\mathbb N}}
\renewcommand{\L}{{\mathcal L}}
\renewcommand{\P}{ {\mathbb P}}
\newcommand{\tG}{ {\tilde G}}
\newcommand{\tW}{ {\tilde W}}
\newcommand{\s}{\sigma}
\newcommand{\bN}{{\mathbf N}}
\newcommand{\Nablat}{{\mathbf {\hat \nabla}}^{t}}
\renewcommand{\epsilon}{\varepsilon}
\newcommand{\eps}{\epsilon}
\renewcommand{\phi}{\varphi}
\newcommand{\abs}[1]{\lvert #1 \rvert}
\newcommand{\bra}[1]{\langle #1 |}
\newcommand{\fraksl}{\mathfrak{sl}}
\newcommand{\bbC}{\mathbb{C}}
\newcommand{\bbZ}{\mathbb{Z}}
\newcommand{\bbN}{\mathbb{N}}
\newcommand{\bbR}{\mathbb{R}}
\newcommand{\Nabla}{\nabla}
\newcommand{\Vol}{\mathrm{Vol}}
\newcommand{\calD}{\mathcal{D}}
\newcommand{\calF}{\mathcal{F}}
\newcommand{\calL}{\mathcal{L}}
\newcommand{\calM}{\mathcal{M}}
\newcommand{\calT}{\mathcal{T}}
\newcommand{\su}{\mathfrak{su}}
\newcommand{\SL}{\mathrm{SL}}
\newcommand{\isom}{\cong}
\renewcommand{\Im}{\mathrm{Im}}
\renewcommand{\Re}{\mathrm{Re}}
\DeclareMathOperator{\Gr}{Gr}
\DeclareMathOperator{\pdeg}{pdeg}
\DeclareMathOperator{\U}{U}
\DeclareMathOperator{\Tr}{Tr}
\DeclareMathOperator{\rk}{rk}
\DeclareMathOperator{\lk}{lk}
\DeclareMathOperator{\Ric}{Ric}
\def\XXint#1#2#3{{\setbox0=\hbox{$#1{#2#3}{\int}$}
\vcenter{\hbox{$#2#3$}}\kern-.5\wd0}}
\newcommand{\Ker}{\text{Ker}}
\begin{document}

\title[A geometric formula for the Witten-Reshetikhin-Turaev Invariants]{A geometric formula for the Witten-Reshetikhin-Turaev Quantum Invariants and some applications}

\author{J{\o}rgen Ellegaard Andersen}
\address{Center for Quantum Geometry of Moduli
  Spaces\\ 
  University of Aarhus\\
  DK-8000, Denmark}

\email{andersen@qgm.au.dk}

\thanks{Supported in part by the center of excellence grant "Center for Quantum Geometry of Moduli Space" from the Danish National Research Foundation.}
\maketitle

\begin{abstract}
We provide a geometric construction of the boundary states for handlebodies 
which we in turn use to give a geometric formula for the Witten-Reshetikhin-Turaev 
quantum invariants.   We then analyze the asymptotics of this invariant in 
the special case of a three manifold given by 1-surgery on a knot and we show 
that if the knot has an irreducible representation of its fundamental group 
into $\SU(2)$, then its quantum invariant cannot equal those of the three 
sphere.   From this we conclude that if a knot has the same colored Jones 
polynomials as the unknot, it must be the unknot.  
\end{abstract}

\section{Introduction}
Witten constructed, via path integral techniques, a quantization of
Chern-Simons theory in $2+1$ dimensions, and he argued in \cite{W1}
that this produced a TQFT according to the Atiyah-Segal-Witten axioms \cite{At}, \cite{Segal}, indexed by a compact simple Lie group and
an integer level $k$. For the group $\SU(n)$ and level $k$, let us
denote this TQFT by $\zkn$. Combinatorially, this theory was first constructed by Reshetikhin and
Turaev, using representation theory of $U_q(\sll(n,\bC))$ at
$q=e^{(2\pi i)/(k+n)}$, in \cite{RT1} and \cite{RT2} (see also \cite{T}). Subsequently,
the TQFT's $\zkn$ were constructed using skein theory by Blanchet,
Habegger, Masbaum and Vogel in \cite{BHMV1}, \cite{BHMV2} and
\cite{B1}.

Let us first review the geometric construction of the Witten-Reshtikhin-Turaev TQFT.   
Let $\Sigma$ be a closed surface of genus $g>1$. Let $\Gamma$ be the mapping 
class group of $\Sigma$.   We will denote the moduli space of flat $\SU(2)$-connections 
on $\Sigma$ by $M$ and its smooth locus for $M'$. It is well known that $M'$ carries the Goldmann symplectic 
structure $\omega$, which is determined by choosing an invariant inner product 
on the Lie algebra of $\SU(2)$.   For the appropriate choice of scaling of 
this inner product we get that the class of $\omega$ generates $H^2(M',\Z)$.   
Let now $(\L,\nabla,\langle\cdot,\cdot\rangle)$ be a prequantum line bundle 
over $(M',\omega)$, e.  g.   the curvature of $\nabla$ is the symmplectic form\[
F_\nabla = -i\omega.  
\]
It is well known that $\Gamma$ acts by symplectomorphisms on $(M',\omega)$ and 
this action can be lifted to an action of $\Gamma$ on $\L$ which preserves 
$\nabla$ and $\langle\cdot,\cdot\rangle$.   There is a very natural $\Gamma$-equivariant 
family of complex structures on $M$ parametrized by Teichm\"{u}ller space $\T$ of $\Sigma$.   
Suppose $\sigma\in \T$ is a complex structure on $\Sigma$, then we can 
consider the moduli space of stable holomorphic bundles of rank $2$ and 
trivial determinant on the Riemann surface $\Sigma_\sigma$.   
This moduli space is naturally a complex algebraic variety $M_\sigma$ and by the 
theorem of Narasimhan and Seshadri \cite{NS1}, \cite{NS2}, we get a natural homeomorphism, which is a diffeomorphism on the smooth locus, from the 
algebraic geometric moduli space $M_\sigma$ to gauge theory moduli space $M$.   We get this way induced a 
complex structure on $M$ and we denote $M$ with this complex structure 
$M_\sigma$.  This structure in fact depends holomorphically on $\sigma\in \T$.   
The complex structure on $M_\sigma$ combines with the connection $\nabla$ to 
produce the structure of a holomorphic line bundle on $\L$ over $M'_\sigma$ and one 
gets a vector bundle
\[H^{(k)}_\sigma=H^0(M'_\sigma, \L^k).\]
We remark that we can extend $\L^k$ with its holomorphic complex structure to a rank one invertible locally free sheaf (which we also denote $\L^k$) over all of $M_\sigma$ and in fact it is well known that
$$ H^0(M'_\sigma, \L^k) \cong H^0(M_\sigma, \L^k).$$
Please \cite{H}, where this is used extensively to construct the connection. See also references to the literature for this fact in \cite{A10}. We note that this gives us the freedom to work with either model, precisely as is being used in \cite{H}.
There is a natural holomorphic structure on the bundle $H^{(k)}$ over $\T$.   
The main result pertaining to this bundle is the following theorem.

\begin{theorem}[Axelrod, Della Pietra and Witten; Hitchin] 
\label{Pflat}
The bundle $H^{(k)}$ supports a natural projectively flat $\Gamma$-invariant 
connection $\Nabla$.   
\end{theorem}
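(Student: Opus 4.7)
The plan is to follow the Hitchin/Axelrod--Della Pietra--Witten construction, which realizes $H^{(k)}$ as a holomorphic subbundle of the trivial bundle $\widetilde{\cH}^{(k)}$ over $\T$ with fiber $C^\infty(M',\L^k)$, and then writes the desired connection as an explicit deformation of the trivial connection $d$ on $\widetilde{\cH}^{(k)}$. Concretely, given a tangent vector $V\in T_\sigma\T$, the variation $\dot J(V)$ of the complex structure on $M_\sigma$ is a section of $\End(T_\sigma M)$ of type $(1,0)+(0,1)$, and via the Kähler form $\omega$ its $(0,1)$-part determines a symmetric bivector $G(V)\in H^0(M_\sigma,S^2T^{1,0}M_\sigma)$ (using that $\omega$ is parallel). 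Define the operator
\[
u(V) = \tfrac{1}{2(k+2)}\,\Delta_{G(V)} + \text{lower order},
\]
where $\Delta_{G(V)}s = \tr\!\bigl(G(V)\,\nabla^{1,0}\nabla^{1,0} s\bigr)$ acts on sections of $\L^k$ via the Chern connection, and the lower order term involves the Ricci potential needed to make the construction well defined. The proposed connection is $\Nabla = d + u$.

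The first key step is to show that $\Nabla$ preserves the holomorphic subbundle $H^{(k)}\subset\widetilde{\cH}^{(k)}$. If $s(\sigma)$ is a local holomorphic section of $H^{(k)}$ (i.e.\ $\bar\partial_\sigma s(\sigma)=0$ for all $\sigma$), then differentiating this identity in the direction $V$ produces an obstruction term $\dot{\bar\partial}(V)s$, which is of type $(0,1)$ and is the contraction of $\dot J(V)^{0,1}$ with $\nabla^{1,0}s$. The content of the calculation is that $\bar\partial_\sigma(\Delta_{G(V)}s)$ precisely cancels $(k+2)\,\dot{\bar\partial}(V)s$ up to terms involving the Ricci form; here one uses that the moduli space $M'$ has first Chern class proportional to $[\omega]$ (so $K_M\cong \L^{-2}$ in the $\SU(2)$ case), which is what produces the factor $k+2$ and allows the Ricci correction to be globally written as a scalar on holomorphic sections. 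Equivalently, one verifies $\bar\partial_\sigma\circ\Nabla_V = 0$ on holomorphic sections, using compatibility of $\nabla$ with $\omega$, the identity $\nabla^{0,1}G(V)=0$ (because $\dot J(V)$ is harmonic in a suitable sense), and the Kähler identities.

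The second step is to compute the curvature $F_{\Nabla}$. Since $\Nabla$ restricted to $H^{(k)}$ is defined by second-order operators varying smoothly in $\sigma$, $F_{\Nabla}(V,W)$ is itself a differential operator on $H^{(k)}_\sigma$. A direct computation (symmetrizing $[u(V),u(W)] + V(u(W)) - W(u(V)) - u([V,W])$) shows that all the differential-operator terms cancel by the flatness of $\nabla$ up to the curvature $F_\nabla=-i\omega$ and the constancy of the Ricci class, leaving a scalar expression that depends only on $(V,W)$ and on $\sigma$. This scalar is exactly the obstruction to flatness, and the claim is that $F_{\Nabla}$ is a multiple of the identity on each fiber, i.e.\ $\Nabla$ is projectively flat. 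Finally, $\Gamma$-invariance is automatic: every ingredient entering $u$ (the complex structure, $\omega$, $\nabla$, and the Chern connection on $\L$) has been chosen to be $\Gamma$-equivariant under the lifted action, so the construction commutes with the mapping class group action on $\widetilde{\cH}^{(k)}$.

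The main obstacle is Step 2, verifying that $\Nabla$ preserves holomorphicity with exactly the coefficient $1/(2(k+2))$. This depends on a delicate interplay between the Kähler identities on $M'_\sigma$, the holomorphic dependence $\sigma\mapsto J_\sigma$, and the identification $K_{M}\cong \L^{-2}$ that provides the crucial shift of level $k\mapsto k+2$; without this identification no choice of coefficient would make the connection preserve $H^{(k)}$. The curvature computation is then essentially formal once preservation of holomorphicity has been established.
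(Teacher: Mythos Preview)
Your proposal follows essentially the same route as the paper's Section~\ref{ghc} (which reviews the construction from \cite{A9}, \cite{ADW}, \cite{H}): embed $H^{(k)}$ in the trivial $C^\infty(M',\L^k)$-bundle, set $\Nabla = d + u$ with $u(V)$ built from $\Delta_{G(V)}$ plus a Ricci-potential correction, verify preservation of holomorphicity using the rigidity condition $\bar\partial_\sigma G(V)=0$ together with the first Chern class relation, and deduce $\Gamma$-invariance from naturality of all ingredients. Two minor corrections: for this moduli space one has $K_M\cong\L^{-4}$ (i.e.\ $n=4$ in the paper's normalization), not $\L^{-2}$, and this is what produces the level shift via $-\tfrac{1}{4k+2n}=-\tfrac{1}{4(k+2)}$; also note that the paper, like your sketch, does not actually carry out the curvature computation establishing projective flatness but defers that to the original references \cite{ADW}, \cite{H}.
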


This is a result proved independently by Axelrod, Della Pietra and Witten 
\cite{ADW} and by Hitchin \cite{H}. In section 2, we review out differential 
geometric construction of the connection $\Nabla$ in the general setting 
discussed in \cite{A9}. 

\begin{definition}
Let $\P\V^{(k)}(\Sigma)$ be the space of covariant constant sections of $\P(H^{(k)})$.

\end{definition}

We observe that there is a representation of the mapping class group $\Gamma$ on $\P\V^{(k)}(\Sigma)$.

By a theorem of Laszlo \cite{La1}, we know that there is an isomorphism between the bundle $H^{(k)}$ and then the bundle of conformal block constructed by Tsuchiya, Ueno and Yamada in \cite{TUY}, which takes the Hitchin connection to the TUY connection. In the paper \cite{AU1}, it is explain how one twist the bundle of conformal block with a fractional power of the Quillen determinant line bundle (in a version described in \cite{AU2}) over Teich\"{u}ller space, so as to obtain a vector bundle one which a central extension of the mapping class group naturally acts and which supports an invariant {\em flat} connection. Further it is shown in \cite{AU1}, that the resulting representations of a central extension of the mapping class group is part of a modular functor, which we in \cite{AU4} show is isomorphism to the modular functor underlying the Reshetikhin-Turaev TQFT for $U_q(\sll(2,\bC))$ at
$q=e^{(2\pi i)/(k+2)}$. Combining these results we get the following theorem

\begin{theorem}[Andersen and Ueno]
There is a natural $\Gamma$-equivariant isomorphism 
$$I_{\Sigma} : \P Z^{(k)}(\Sigma) \ra \P\V^{(k)}(\Sigma).$$
\end{theorem}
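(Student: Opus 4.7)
The plan is to obtain $I_\Sigma$ as a composition of three projective isomorphisms, each coming from one of the cited results, and then verify that the composed isomorphism is $\Gamma$-equivariant.

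First I would invoke Laszlo's theorem from \cite{La1}. This supplies, for each $\sigma\in\T$, an isomorphism between the fiber $H^{(k)}_\sigma$ and the fiber of the Tsuchiya--Ueno--Yamada bundle of conformal blocks $V^{(k)}_\sigma$ at the same point, and asserts that, up to scalars, these fiberwise isomorphisms intertwine the Hitchin connection $\Nabla$ with the TUY connection. Consequently Laszlo's theorem produces an isomorphism of projective bundles with projectively flat connection, and hence an induced isomorphism on spaces of covariant constant sections of the associated projectivizations. This gives a $\Gamma$-equivariant projective identification of $\P\V^{(k)}(\Sigma)$ with the analogous space built from $\P(V^{(k)})$.

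Next I would pass from the projective setting to an honest linear setting by twisting. Following \cite{AU1}, tensoring the conformal blocks bundle with an appropriate fractional power of the Quillen determinant line bundle (in the normalization of \cite{AU2}) over $\T$ produces a bundle carrying a genuinely flat connection for which a central extension $\tilde\Gamma$ of the mapping class group acts. Projectivizing kills both the Quillen twist and the central extension, so the space of flat sections of this twisted bundle projectivizes to exactly the space of covariant constant sections of $\P(V^{(k)})$. Thus at the projective level the twisted conformal block construction and the Hitchin construction define the same mapping class group representation.

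Now I would apply the identification of modular functors from \cite{AU4}. There it is shown that the modular functor built from the twisted TUY construction of \cite{AU1} is isomorphic, as a modular functor, to the modular functor underlying the Reshetikhin--Turaev TQFT $\zk$ for $U_q(\mathfrak{sl}(2,\bC))$ at $q=e^{2\pi i/(k+2)}$. Evaluating this isomorphism on the surface $\Sigma$ gives an isomorphism of $\tilde\Gamma$-representations between the $\Sigma$-fiber of the twisted conformal blocks modular functor and $Z^{(k)}(\Sigma)$, which descends to a $\Gamma$-equivariant projective isomorphism $\P Z^{(k)}(\Sigma)\cong \P\V^{(k)}(\Sigma)$ by composing with the previous two steps. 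Naturality in $\Sigma$ (hence in the action of $\Gamma$) is built into each of the input theorems, so the composed $I_\Sigma$ is automatically $\Gamma$-equivariant.

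The main obstacle in assembling the argument is bookkeeping of the central extensions and the Quillen twist: the Hitchin and Reshetikhin--Turaev sides produce only projective representations of $\Gamma$, while the TUY side naturally carries representations of central extensions $\tilde\Gamma$, and one must check that the projectivizations agree so that the isomorphisms of \cite{La1}, \cite{AU1}, \cite{AU2}, \cite{AU4} can legitimately be chained together. Once this compatibility is in place, the theorem follows immediately by composing the three isomorphisms, with $\Gamma$-equivariance inherited from each factor.
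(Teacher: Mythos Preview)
Your proposal is correct and follows exactly the route the paper takes: the paper does not give a separate proof of this theorem but simply states that it follows by combining Laszlo's identification of the Hitchin and TUY connections \cite{La1}, the Quillen-twist construction of \cite{AU1,AU2} producing a genuine flat connection and a modular functor, and the isomorphism of that modular functor with the Reshetikhin--Turaev one from \cite{AU4}. Your three-step chain and your remark about tracking the central extensions and projectivizations are precisely the content of the paragraph preceding the theorem.
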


This allows us to use the Hitchin connection in the bundle $H^{(k)}$ to study the Reshetikhin-Turaev TQFT, as we do in this paper.

Supppose now $H$ is a handlebody whose boundary is 
identified with $\Sigma$.   We will in this paper give a geometric definition of the 
boundary state, which the Reshetikhin-Turaev TQFT associated to $H$, i.  e.   
we will associate a covariant constant section $s_H^{(k)}$ of $H^{(k)}$ to $H$.   
To do this, suppose $P$ is a pair of pants decomposition of $\Sigma$, which is compatible 
with $H$, in the sense that all curves in $P$ bounds discs in $H$.   By 
mapping a flat $\SU(2)$-connection to the traces of its holonomy around each 
of the curves in $P$, we get a smooth map $h_P: M \to [-2,2]^{3g-3}$.  The fibers of this map is the so-called 
Jeffrey-Weitsman real polarization $F_P$ on the moduli space $M$.  The fibers 
over the part of the image which is contained in $(-2,2)^{3g-3}$ are 
Lagrangian sub-tori of $M$.   Fibers which map to the boundary of the image 
$h_P(M)\subset [-2,2]^{3g-3}$ are singular.   We will give a precise 
description of them in section \ref{Welldef}.
The geometric quantization of the moduli space $M$ with respect to the real 
polarizations $F_P$ was studied by Jeffrey and Weitsman in \cite{JW}. In general 
when one quantizes a compact symplectic manifold with respect to a real 
polarization with compact leaves, one needs to consider distributional 
sections of the pre-qunatum line bundle, which are covariant constant along 
the polarization (see e.  g.  \cite{Woodhouse}, \cite{A1} and \cite{A2}) One finds that these 
distributional sections are supported on the so-called Bohr-Sommerfeld fibers 
of the polarization.  

\begin{definition}
	Let $H_P^{(k)}$ denote the vector space of distributional sections of $\L^k$ 
	over $M$, which are covariant constnat along the directions of $F_P$.   
	A leaf $L$ of $F_p$, i.  e.   a fiber of $h_P$, is called a level $k$ Bohr-Sommerfeld 
	fiber if $(\L^k,\nabla)|_L$ is trivial.   We denote the set of level $k$ 
	Bohr-Sommerfeld fibers by $B_k(P)$.  
\end{definition}
We observe that, if $L$ is a leaf of $F_P$, then $L$ is a Bohr-Sommerfeld 
fiber if and only if $(\L,\nabla)|_L$ admits a covariant constant section 
defined on all of $L$.   By choosing a covariant constant section of 
$(\L,\nabla)|_L$ for each $L\in B_k(P)$ and considering them as distributional 
sections of $\L$ over $M$, we obtain a basis for $H^{(k)}_P$.   The main 
result of \ref{} is that 
\[
	\dim H_\sigma^{(k)} = \dim H_P^{(k)} 
\]
for all $\sigma\in \T$ and every pair of pants decomposition of $\Sigma$.
Consider a sequence $\sigma_t$, $t\in \bR_+ \cup\{0\} $ obtained from some 
arbitrary starting point $\sigma_0\in \T$, such that $\sigma_t$ is obtained 
from $\sigma_0$ by insertion of a flat cylinder of length $t$ into the cut of 
$\Sigma$ along each of the curves in $P$.   We have the following theorem from 
\cite{A3}. 

\begin{theorem}
\label{}
The complex polarizations of $M$ induced from $\sigma_t$ converges to $F_P$ as 
$t$ goes to infinity.  
\end{theorem}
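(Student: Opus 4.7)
The plan is to use Hitchin's description of the complex structure $J_t$ on $M$ induced by $\sigma_t$ via Hodge theory on $\Sigma_{\sigma_t}$, and to reduce the convergence of polarizations to a theorem on the convergence of K\"ahler polarizations under imaginary-time Hamiltonian flows. At a smooth point $[A]\in M$ lying over a regular value of $h_P$, tangent vectors are canonically identified with harmonic $\ad(A)$-valued $1$-forms on $\Sigma_{\sigma_t}$, and $J_t$ acts as (minus) the Hodge star $*_{\sigma_t}$. The real polarization $F_P$ at such a point is the Lagrangian span of the Hamiltonian vector fields $X_{h_{C_i}}$, $i=1,\ldots,3g-3$. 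Convergence of polarizations amounts to the statement that $T^{0,1}M_{\sigma_t}$, viewed as a point in the Grassmannian of Lagrangian subspaces of $T_{[A]}M\otimes\bC$, converges to $F_P\otimes\bC$ as $t\to\infty$.

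The geometric input that drives the whole argument is the identification of flat-cylinder insertion with an imaginary-time Hamiltonian flow on $M$. The Fenchel--Nielsen twist along $C_i$ is the real part of a canonical holomorphic vector field on a neighbourhood of $\T$ in its quasi-Fuchsian deformation space, whose imaginary part is (up to normalization) the infinitesimal grafting, i.e.\ the cylinder-insertion deformation at $C_i$; this is the classical Thurston--McMullen picture. Under the Narasimhan--Seshadri isomorphism, the twist flow on $\T$ descends on $M$ to the Hamiltonian flow of $h_{C_i}$. Consequently the family $\sigma_t$ induces on $M$ exactly the family of complex structures obtained from $J_0$ by the commuting imaginary-time flows $\exp(itX_{h_{C_1}})\circ\cdots\circ\exp(itX_{h_{C_{3g-3}}})$, where commutation follows from disjointness of the $C_i$ and the corresponding Poisson-commutation of the $h_{C_i}$. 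Given this identification, the statement reduces to a Baier--Florentino--Mour\~ao--Nunes type convergence theorem: the complex polarizations obtained by pushing $J_0$ forward along the commuting imaginary-time flows of a real-analytic completely integrable Hamiltonian system converge, on the open subset where the system is non-singular, to the real polarization spanned by its Hamiltonian vector fields. Complete integrability of $h_P$ on the open dense locus $h_P^{-1}((-2,2)^{3g-3})$ is the theorem of Jeffrey and Weitsman \cite{JW}, and the remaining ingredients reduce to a check in the Hitchin K\"ahler geometry that the matrix of $J_t$ in a Darboux frame adapted to $(X_{h_{C_i}}, X_{h_{C_i}}^\sharp)$ tends to a purely imaginary multiple of the identity on the $F_P$-directions.

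The main obstacle is the behaviour at the boundary of $h_P(M)\subset[-2,2]^{3g-3}$, where the Hamiltonian flows of $h_{C_i}$ are only partially defined, the leaves of $F_P$ become singular, and the grafting / imaginary-twist identification is only correct modulo corrections that must be controlled as one degenerates the Riemann surface. In \cite{A3} this is handled by restricting attention to the open subset of $M$ over $(-2,2)^{3g-3}$, which has full measure and contains every level-$k$ Bohr--Sommerfeld fibre, and by interpreting the convergence in the weak distributional sense needed to compare $H^{(k)}_{\sigma_t}$ with $H^{(k)}_P$ as $t\to\infty$.
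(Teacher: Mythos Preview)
The paper does not prove this theorem in the body; it is quoted from \cite{A3}, and the argument there (as reflected in the explicit computations around Theorem~\ref{prop1}) proceeds by direct analysis of the Hodge star on $\ad(A)$-valued harmonic $1$-forms as the conformal structure is stretched along the curves in $P$: one writes the holomorphic tangent space $P_t\subset T_{[A]}M\otimes\bC$ in a fixed symplectic frame adapted to the torus fibration, tracks the matrix $Z_t$ with $P_t=\mathrm{graph}(Z_t)$, and shows $Z_t\to 0$ from the neck-stretching behaviour of harmonic forms. Your proposal, by contrast, tries to identify the family $I_{\sigma_t}$ on $M$ with a Baier--Florentino--Mour\~ao--Nunes imaginary-time flow generated by the Goldman Hamiltonians $h_{C_i}$, and then invoke their general convergence theorem.

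The central gap is the identification step. You assert that ``the family $\sigma_t$ induces on $M$ exactly the family of complex structures obtained from $J_0$ by the commuting imaginary-time flows $\exp(itX_{h_{C_1}})\circ\cdots\circ\exp(itX_{h_{C_{3g-3}}})$'', but this is not established and is not a standard fact. The Thurston--McMullen statement you cite concerns the complexified Fenchel--Nielsen flow on Teichm\"uller space (or its quasi-Fuchsian thickening), i.e.\ a flow on $\T$; the Goldman twist flow of $h_{C_i}$ is a flow on $M$. These live on different spaces, and the bridge you need is a statement about the \emph{derivative of the Narasimhan--Seshadri map} $\T\to\{\text{complex structures on }M\}$: that the variation $V[I]$ in the grafting direction equals the Lie derivative of $I$ along $I_\sigma X_{h_{C_i}}$ (the ``imaginary'' Goldman field) on $M$. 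That is precisely the nontrivial analytic content, and it is essentially what \cite{A3} proves by hand via harmonic theory on the long cylinders. Without it, the BFMN machinery has no input. If you want to salvage the approach, you must either prove this variation-of-complex-structure identity directly, or show that $G(V)_\sigma$ for $V$ the grafting vector field coincides with the BFMN symbol built from $h_{C_i}$; either route will force you back into the harmonic-form estimates of \cite{A3}.

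There is also a factual error in your last paragraph: the open set $h_P^{-1}\bigl((-2,2)^{3g-3}\bigr)$ does \emph{not} contain every Bohr--Sommerfeld fibre. The fibre corresponding to the zero labelling $0\in L_k(P)$ --- the one carrying the handlebody boundary state --- is $h_P^{-1}(2,\ldots,2)$, which lies on the boundary stratum. The singular fibres are essential to the applications in this paper, so one cannot simply discard them.
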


Let 
\[
	P_t(\sigma_0,P) : H_{\sigma_0}^{(k)} \to H_{\sigma_t}^{(k)}
\]
be the parallel transport with respect to the Hitchin connection in $H^{(k)}$ 
over $\T$ along the curve $(\sigma_s)$, $s\in [0,t]$.   In Section \ref{sect4} we show
that there exist a limiting linear map
\[
	P_\infty (\sigma_0, P) : H_{\sigma_0}^{(k)} \to H_P^{(k)}.\label{eq:limitmap}
\]
and we further prove that 
\begin{theorem}
\label{Miso}
The map \ref{eq:limitmap} is an isomorphism.  
\end{theorem}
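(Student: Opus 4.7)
The plan is the following. Since $\dim H_{\sigma_0}^{(k)} = \dim H_P^{(k)}$ has already been established, it suffices to prove that $P_\infty(\sigma_0,P)$ is injective. I will do so by constructing, from the natural basis of $H_P^{(k)}$ indexed by Bohr--Sommerfeld fibers, a separating family of linear functionals on $H_{\sigma_0}^{(k)}$ obtained by composing with $P_\infty(\sigma_0,P)$.

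Concretely, for each $L\in B_k(P)$ fix a covariant constant section $\tau_L$ of $(\L^k,\nabla)|_L$. Then $\{\tau_L\}_{L\in B_k(P)}$ is a basis of $H_P^{(k)}$, and for each $L$ one has the coefficient functional $\ell_L\colon H_P^{(k)}\to\bC$. Define
\[
\ell_L^{\sigma_0} \;:=\; \ell_L\circ P_\infty(\sigma_0,P).
\]
For $s_0\in H_{\sigma_0}^{(k)}$ the parallel transport $s_t:=P_t(\sigma_0,P)(s_0)$ is a holomorphic section of $\L^k$ on $M_{\sigma_t}$, and I expect $\ell_L^{\sigma_0}(s_0)$ to be realized as a stationary phase limit. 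Concretely, one chooses a tubular neighborhood $U_L$ of $L$ on which $F_P$ restricts to a trivial Lagrangian torus fibration, extends $\tau_L$ to a smooth section of $\L^k$ across a transversal, and pairs $s_t$ with this extension against a compactly supported cut-off density in the transverse directions. The convergence result of Section~\ref{sect4} together with the fact that the complex polarization from $\sigma_t$ tends to $F_P$ ensures that as $t\to\infty$ this integral localizes to $L$ by ordinary stationary phase, producing precisely the Bohr--Sommerfeld coefficient of $P_\infty(\sigma_0,P)(s_0)$.

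To verify that the collection $\{\ell_L^{\sigma_0}\}_{L\in B_k(P)}$ is separating, I would construct a dual family. For each $L\in B_k(P)$ build, via a Bergman kernel / coherent state construction on $M_{\sigma_t}$, a sequence $\rho_L^{(t)}\in H_{\sigma_t}^{(k)}$ whose modulus squared concentrates Gaussian-wise on $L$ as $t\to\infty$ (this is where the particular choice of path $\sigma_t$, inserting long flat cylinders along the curves of $P$, does the work of sharpening the peaking). Parallel transporting these back to $\sigma_0$ yields vectors $\tilde\rho_L^{(t)}\in H_{\sigma_0}^{(k)}$. A direct asymptotic computation then shows that the matrix
\[
\bigl[\,\ell_{L'}^{\sigma_0}(\tilde\rho_L^{(t)})\,\bigr]_{L,L'\in B_k(P)}
\]
becomes asymptotically diagonal and non-singular, which would force the $\ell_L^{\sigma_0}$ to be linearly independent and hence $P_\infty(\sigma_0,P)$ to be injective.

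The main obstacle I anticipate is the analytic control of the stationary phase expansion near the singular Bohr--Sommerfeld fibers, i.e.\ those lying in $h_P^{-1}(\partial\,h_P(M))$. Over the open locus where $F_P$ is a smooth torus fibration the WKB argument is standard, but at the singular fibers one must use the explicit local description given in Section~\ref{Welldef} to check that the peaked sections $\rho_L^{(t)}$ are well-defined, globally holomorphic, and give nonvanishing leading coefficients. Once this is handled, injectivity combined with the equality of dimensions yields the isomorphism claimed in Theorem~\ref{Miso}.
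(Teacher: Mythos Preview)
Your approach is essentially the same as the paper's, only dualized. The paper proves \emph{surjectivity} rather than injectivity: for each $s_P\in H_P^{(k)}$ it defines $s_{P,\sigma_t}\in H_{\sigma_t}^{(k)}$ as the Riesz representative of the functional $s\mapsto\sum_{b\in B_k(P)}\int_{h_P^{-1}(b)}\langle s,s_P\rangle\,\Vol_{\sigma_t,b}$, and then shows $\lim_{t\to\infty}P_\infty(\sigma_t,P)(s_{P,\sigma_t})=s_P$ by the same degeneration-of-polarization asymptotics you invoke (the paper cites the techniques of \cite{A11}). Since the image of $P_\infty(\sigma_t,P)$ coincides with that of $P_\infty(\sigma_0,P)$ (they differ by the invertible parallel transport $P_t$), this forces surjectivity, and the dimension equality finishes the argument.

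Your peaked sections $\rho_L^{(t)}$ are exactly the paper's $s_{P,\sigma_t}$ for $s_P=\tau_L$: the Riesz dual of integration over a Bohr--Sommerfeld leaf is a coherent-state smear along that leaf. Your asymptotic-diagonality statement for $[\ell_{L'}^{\sigma_0}(\tilde\rho_L^{(t)})]$ is equivalent to the paper's limit $P_\infty(\sigma_t,P)(s_{\tau_L,\sigma_t})\to\tau_L$. So the analytic content is identical; you have simply packaged it as a separating family of functionals rather than as an approximate right inverse. Neither formulation avoids the issue you flag at the singular Bohr--Sommerfeld fibers; the paper does not treat this point in more detail than you do, referring instead to the arguments in \cite{A11}.
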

Jeffrey and Weitsman also describe the set $B_k(P)$ explicitly in \cite{JW} as 
follows.  We can associate to $P$ a trivalent graph $\Gamma_P$ as follows.   
Each pair of pants is represented by a vertex and two vertices are connected 
by an edge if they are adjacent on the surface $\Sigma$.   By the definition 
of $h_P$ above, we see that the set of leafs of $F_P$ is identified with a 
subset of the set of maps from the set of edges $E_{\Gamma_{P}}$ of $\Gamma_P$ 
to $[-2,2]$.   By identifying $[0,k]$ with $[-2,2]$ using the bijection 
\[
	t \mapsto 2\cos(\pi t/k)
\]
we can consider the set of leaves of $F_P$ as a subset of the set of maps from 
$E_{\Gamma_P}$ to $[0,k]$.   For each vertex $v$ in the set of vertices 
$V_{\Gamma_P}$ in $\Gamma_P$, we let $e_1(v)$, $e_2(v)$ and 
$e_3(v)$ be the three edges emanating from $v$ in some ordering of the edges around the vertex $v$.   
\newcommand{\relmiddle}[1]{\mathrel{}\middle#1\mathrel{}}
\begin{definition}
	For each pair of pants decomposition $P$ of $\Sigma$ we have that
	\[
L_k(P) = \left\{ l : E_{\Gamma_P} \to \{0,k\} \relmiddle| 
\begin{aligned} &\text{$l(e) \in 2\bZ$ if $e \in E_{\Gamma_P}$ is 
separating} \\ &\text{$(l(e_1(v)),l(e_2(v)),l(e_3(v)))$ is admissible 
$\forall v \in V_{\Gamma_P}$} \end{aligned} \right\},	
	\]
	where a triple of integers $(l_1, l_2, l_3)$ is said to be 
	\emph{admissible} if the following three conditions are satisfied.  
	\begin{align*}
		|l_1-l_2| \leq l_3 &\leq l_1+l+2 \\
		l_1+l_2+l_3 &\leq 2k \\
		l_1 + l_2 +l_3 &\in 2\Z
	\end{align*}
\end{definition}
Theorem 8.1 in \cite{JW} states that:

\begin{theorem}[Jeffrey-Weitsman]
\label{}
Under the above identification we have that 
\[
	B_k(P) = L_k(P).
\]
\end{theorem}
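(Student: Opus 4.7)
The plan is to reduce the theorem to an action--angle analysis combined with a classical existence criterion for $\SU(2)$--triangles. I would structure the argument in four steps.

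First, I would exhibit $F_P$ as (part of) a singular Lagrangian torus fibration coming from a Hamiltonian $T^{3g-3}$--action. Using Goldman's formula for the Hamiltonian vector field of a trace function of a simple closed curve, one checks that after the normalization $\theta_e := \pi^{-1}\arccos(h_{P,e}/2) \in [0,1]$, the functions $\theta_e$ Poisson--commute and each generates a periodic Hamiltonian flow of period $1$ (where defined) on the open dense subset $M^{\circ} \subset M$ where every $h_{P,e}$ lies in $(-2,2)$. Thus $M^{\circ}$ becomes a principal $T^{3g-3}$--bundle over the interior of the polytope $\Delta := h_P(M)$, with the fibers of $F_P$ over this interior being exactly the Lagrangian orbit tori.

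Second, I would compute the prequantum holonomy around each generating cycle. Because the $\theta_e$--action lifts to a prequantum $S^1$--action on $(\L,\nabla)$ with weight equal to the moment map (by the standard prequantization identity $\iota_X \nabla - \langle \mu, X\rangle = 0$ for a Hamiltonian $S^1$--action), the holonomy of $\nabla$ around the $e$--th cycle of a fiber $L_\lambda$ is $\exp(2\pi i\, \theta_e(\lambda))$. Therefore $(\L^k,\nabla)|_{L_\lambda}$ is trivial precisely when $k\theta_e(\lambda) \in \Z$ for every edge $e$. Under the bijection $t \mapsto 2\cos(\pi t/k)$, which is exactly the bijection encoded in $\theta_e = l(e)/k$, this gives integer labels $l \colon E_{\Gamma_P} \to \{0,1,\dots,k\}$.

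Third, I would characterize the image $\Delta$. By the very construction of $h_P$, a tuple of trace values $(t_e)$ lies in $\Delta$ iff at each vertex $v \in V_{\Gamma_P}$ the three boundary traces $(t_{e_1(v)},t_{e_2(v)},t_{e_3(v)})$ are realized as traces of elements $A_1,A_2,A_3 \in \SU(2)$ with $A_1A_2A_3=I$ (with additional homological constraints along separating curves). A classical spherical--triangle computation shows that this is possible iff the corresponding angles $\alpha_i = \arccos(t_i/2) \in [0,\pi]$ satisfy
\[
|\alpha_1-\alpha_2| \le \alpha_3 \le \alpha_1+\alpha_2 \quad\text{and}\quad \alpha_1+\alpha_2+\alpha_3 \le 2\pi,
\]
which upon substituting $\alpha_i = \pi l_i/k$ becomes exactly the admissibility inequalities $|l_1-l_2|\le l_3 \le l_1+l_2$ and $l_1+l_2+l_3 \le 2k$.

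Finally, the parity conditions have to be extracted from the singular fibers and their interaction with the prequantum data. Where $h_{P,e} = \pm 2$, the $\theta_e$--circle collapses and the Lagrangian fiber degenerates in the toric fashion; for a covariant--constant distributional section to extend continuously through these collapses, the integer labels must belong to the correct affine sublattice. Locally, this forces the parity condition $l(e_1(v))+l(e_2(v))+l(e_3(v)) \in 2\Z$ at each vertex (arising from the $\{\pm I\}$ ambiguity in representing a pair--of--pants holonomy and the resulting double--cover of the Goldman flow), and forces $l(e) \in 2\Z$ on separating edges (since then the Goldman flow has period $1/2$ rather than $1$, halving the Bohr--Sommerfeld spacing). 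Together with Steps 2 and 3, this gives $B_k(P) = L_k(P)$.

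The main obstacle is the careful analysis in the fourth step: one must work out precise local models of $(M,\omega,\L,\nabla)$ near the strata where a holonomy becomes $\pm I$ (where some representation becomes reducible on a piece of the pants decomposition), and show that the distributional covariant--constant sections extend across each such stratum exactly under the claimed parity condition. Everything else is essentially an action--angle computation combined with the $\SU(2)$ realizability of a spherical triangle.
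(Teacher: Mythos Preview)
The paper does not give its own proof of this statement; it simply records it as Theorem~8.1 of Jeffrey--Weitsman \cite{JW} and uses it as input. Your outline is essentially a reconstruction of the Jeffrey--Weitsman argument, and Steps~1--3 (Goldman action--angle variables, prequantum holonomy giving integrality, and the $\SU(2)$ spherical--triangle description of the image polytope) are correct and match what is done in \cite{JW}.

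Step~4, however, is misdirected. The Bohr--Sommerfeld set $B_k(P)$ is defined in the paper fiber by fiber: a leaf $L$ lies in $B_k(P)$ if and only if $(\L^k,\nabla)|_L$ is trivial. This is a local condition on a single regular fiber and has nothing to do with whether a covariant constant distributional section extends across singular strata. So the parity constraints cannot be extracted from an extension argument of the kind you describe; they must already be visible in the holonomy computation on a generic torus fiber. In \cite{JW} this is exactly what happens: your Step~2 formula ``holonomy around the $e$-th cycle equals $\exp(2\pi i\,\theta_e)$'' is an oversimplification. The Chern--Simons line bundle has a specific normalization, and the actual holonomy of $\nabla$ around the Goldman twist cycles carries extra half-integer phase shifts (equivalently, the effective torus acting on a regular fiber is a quotient of $T^{3g-3}$ by a finite group coming from the center $\{\pm\Id\}$ at each pair of pants). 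These corrections are precisely what force $l(e_1(v))+l(e_2(v))+l(e_3(v))\in 2\Z$ at every vertex. Your parenthetical remark about separating edges (period $1/2$ of the Goldman flow) is on the right track and is the correct mechanism there; the vertex parity should be obtained by the same kind of careful period/holonomy bookkeeping, folded into a corrected Step~2, rather than by any analysis near the walls of the polytope.
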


We recall that the Reshetikhin--Turaev TQFT assigns a Hermitian vector space to $\Sigma$, which given the pair of pants decomposition $P$ of $\Sigma$ is provided with a basis indexed exactly by $L_k(P)$, see \cite{RT1} , \cite{RT2}, \cite{T}. We also refer to the skein theory model of Blanchet, Habegger, Masbaum and Vogel, \cite{BHMV1}, \cite{BHMV2}, \cite{B1}. We let the vector corresponding to $l \in L_k(P)$ be denoted by $v_l$. 
  Let $0\in L_k(P)$ be the labeling 
corresponding to the zero map from $E_{\Gamma_P}$ to $\{0,k\}$.   We 
recall that in the Reshetikhin-Turaev TQFT the boundary vector associated to $H$ is exactly 
$v_0$.   We observe that the fiber of $h_P$ corresponding to $0\in L_k(P)$ is 
exactly the same as the space of connections in $M$ that extends over $H$ 
which is the same as $h^{-1}_P(2)$, where $2\in B_k(P)$ refers to 
$(2,2,\dots, 2)\in [-2,2]^{3g-3}$.   Moreover, as it is explained in \cite{Fr}, 
the Chern-Simons functional defines a section of $\mathcal L|_{h^{-1}_P(2)}$, 
whose $k$'th tensor power we denote $\exp(2\pi i k CS)$.  

\begin{definition}\label{HBV}
	We let $s^{(k)}_{H,P}(P) \in H^{(k)}_P
	$ be a covariant constant section of $\L^k
	$ over the Bohr-Sommerfeld fiber of $P$ corresponding to $0\in L_k(P)
$ which is given by $\exp(2\pi i CS)$.   Let $s^{(k)}_{H,P}(\sigma_0)\in 
H^{(k)}_{\sigma_0}$ be given by
\[
	s^{(k)}_{H,P}(\sigma_0)= P_\infty(\sigma_0, P)^{-1}(s^{(k)}_{H,P}(P)),
\]
for $\sigma_0 \in \T$.   
\end{definition}
We prove the following theorem in Section \ref{HSHV}.
\begin{theorem}
\label{Welldefth}
Suppose that $P_1$ and $P_2$ are pair of pants decomposition of $\Sigma$, which 
are compatible with $H$.   Then $s_{H,P_1}^{(k)}(\sigma_0)$ agrees with 
$s_{H,P_2}^{(k)}(\sigma_0)$ up to multiplication by a root of one.  
\end{theorem}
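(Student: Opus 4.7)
The plan is to isolate in Definition~\ref{HBV} what is intrinsic to $H$ versus what genuinely depends on the chosen pair of pants decomposition. As observed before that definition, when $P$ is compatible with $H$ the fiber $h_P^{-1}(2,\ldots,2)$ equals the set of flat $\SU(2)$-connections on $\Sigma$ that extend over $H$, a Lagrangian subvariety of $M$ determined by $H$ alone. In particular $h_{P_1}^{-1}(2) = h_{P_2}^{-1}(2)$. Moreover, the section $\exp(2\pi i k\, \mathrm{CS})$ of $\L^k$ over this subvariety is built from the Chern--Simons functional on $H$ (see \cite{Fr}), so it too is intrinsic to $H$ up to a root of unity reflecting the choice of lift of $\mathrm{CS}$. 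Consequently $s_{H,P_1}^{(k)}(P_1)$ and $s_{H,P_2}^{(k)}(P_2)$ are the same distributional section of $\L^k$ on $M$ modulo a root of unity, even though a priori they are viewed inside different spaces $H_{P_1}^{(k)}$ and $H_{P_2}^{(k)}$.

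The remaining task is to show that the parallel transports $P_\infty(\sigma_0, P_1)^{-1}$ and $P_\infty(\sigma_0, P_2)^{-1}$ send this common distributional section to the same vector in $H_{\sigma_0}^{(k)}$, again up to a root of unity. The cleanest route I see is to pass through the Reshetikhin--Turaev model via the Andersen--Ueno isomorphism $I_\Sigma \colon \P Z^{(k)}(\Sigma) \to \P \V^{(k)}(\Sigma)$. Under the combination of $I_\Sigma$ with $P_\infty(\sigma_0,P)^{-1}$, the Bohr--Sommerfeld basis of $H_P^{(k)}$ should correspond projectively to the Reshetikhin--Turaev conformal block basis $\{v_l\}_{l \in L_k(P)}$; this identification is the geometric content of the convergence theorem from \cite{A3} combined with Laszlo's theorem \cite{La1} and the identification of the resulting modular functor with Reshetikhin--Turaev carried out in \cite{AU1}, \cite{AU4}. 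Once this correspondence is established, the theorem reduces to the $P$-independence of the Reshetikhin--Turaev boundary vector $v_0$, which is simply the RT invariant of the empty link in $H$ and is manifestly the same element of $Z^{(k)}(\Sigma)$ for every $H$-compatible $P$.

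To upgrade projective agreement to agreement up to a root of unity one tracks the normalization ambiguities explicitly on both sides. On the geometric side, the freedom in choosing a unit covariant constant lift of $\exp(2\pi i k \,\mathrm{CS})$ on the compact Lagrangian $h_P^{-1}(2)$ is a finite cyclic group of roots of unity. On the Reshetikhin--Turaev side, once one fixes the central extension of the mapping class group and the associated $2p$-structure on $\Sigma$ as in \cite{BHMV1}, \cite{BHMV2}, the vector $v_0$ is determined up to a root of unity. The main obstacle I anticipate is making the Bohr--Sommerfeld-to-conformal-block correspondence precise enough, along the flat-cylinder degeneration $\sigma_t$ as $t \to \infty$, to yield agreement of normalizations rather than merely of projective classes; this requires carefully tracking scalars in the convergence analysis of Section~\ref{sect4} and of \cite{A3}. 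Granting this, the $P$-independence of $v_0$ immediately gives the claim.
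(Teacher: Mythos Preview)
Your proposal is essentially correct and follows the same strategy as the paper: identify the geometric data with the Reshetikhin--Turaev model via the Andersen--Ueno isomorphism $I_\Sigma$, and then invoke the manifest $P$-independence of the RT handlebody vector $v_0$. The paper's proof in Section~\ref{HSHV} is organized slightly differently: rather than appealing directly to the global modular-functor isomorphism, it reduces to the case where $P_1$ and $P_2$ differ by a single elementary move, and then uses the explicit identification of the Hitchin/KZ parallel transport on the four-punctured sphere (Section~\ref{4p}, Theorem~\ref{thm12}) and the once-punctured torus (Section~\ref{elliptic}, Theorem~\ref{thm13}) with the corresponding RT flip/$S$-matrix, which sends the zero label to the zero label. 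Your more abstract appeal to \cite{La1}, \cite{AU1}, \cite{AU4} is a legitimate shortcut, since the content of Sections~\ref{4p}--\ref{elliptic} is precisely what makes that identification compatible with factorization.

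One comment on your first paragraph: the observation that $h_{P_1}^{-1}(2)=h_{P_2}^{-1}(2)$ and that $\exp(2\pi i k\,\mathrm{CS})$ is intrinsic to $H$ is correct (and is noted in the introduction just before Definition~\ref{HBV}), but it does not by itself shorten the argument. The two limiting parallel transports $P_\infty(\sigma_0,P_1)$ and $P_\infty(\sigma_0,P_2)$ are constructed from \emph{different} degenerations in Teichm\"uller space and land in \emph{different} spaces $H_{P_1}^{(k)}$, $H_{P_2}^{(k)}$; knowing that the target distributional sections coincide does not give any a priori relation between the preimages. You correctly recognize this and shift the burden to the RT identification, which is where all the actual work lies. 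The ``root of unity'' rather than merely ``projective'' conclusion is, as you say, a matter of tracking the finite ambiguity in lifting the covariant constant section over the Bohr--Sommerfeld torus and the central-extension normalization on the RT side; the paper remarks after the theorem statement that this residual root of unity could be eliminated by also incorporating the fractional power of the Quillen determinant, but does not carry this out.
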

We do this by just analyzing the parallel transport of this state using the Hitchin connection. If we where to also take into account the the fractional power of the Quillen determinant bundle, one would be able to normalize the boundary vector, so as to elliminate this root of one. This will however not be important for us in this paper.

Now recall the definition of the Hermitian structure from \cite{BHMV1}. By Theorem  4.11 in \cite{BHMV1}. we have that the basis is orthogonal and the norms are given by the following formula
\begin{equation}
	[v_l,v_l] = \eta^{1-g}\frac{\prod_{v\in\nu_{\Gamma_P}}\langle l(v)\rangle}{\prod_{v\in\epsilon_{\Gamma_P}}\langle l(e)\rangle}	\label{eq:normofbasis},
\end{equation}
where
\[
 \eta = \sqrt{\frac 2 r} \sin(\pi/r)
\]
with $\langle j \rangle=(-1)^j[j+1]$ for any integer $j$ and for any triple of 
integers $(a,b,c)$
\[
	\langle a,b,c\rangle = 
	(-1)^{\alpha+\beta+\gamma}\frac{[\alpha+\beta+\gamma +1]! [\alpha]![\beta]![\gamma]!}{[a]![b]![c]!}
\]
with
\[
 a=\beta+\gamma, b= \alpha+\gamma, c=\alpha+\beta.  
\]
Furthermore $r=k+2$.  
We observe that (\ref{eq:normofbasis}) is positive for all $l\in L_k(P)$. We now introduce an orthonormal basis $\tilde v_l$, $l\in L_k(P)$, given by

\begin{align*}
	\tilde v_l = \frac{v_l}{[v_l,v_l]^\frac12}.
\end{align*}
As will demonstrated in this paper, the basis vector $\tilde v_l$ correspond to a covariant constant section of $\L^k$ of unit norm over the leaf of $F_P$ corresponding to $l$. We therefore define a Hermitian structure $(\cdot,\cdot)_P^{(k)}$ in $H_P^{(k)}$ as follows. Suppose $s_1,s_2 \in H_P^{(k)}$, then for each $L \in B_k(P)$ we have that $s_1$, $i = 1,2$, are covariant constant sections of $\calL^k|_L$. Hence we see that $\langle s_1, s_2 \rangle$ is constant along the leaves of $P$ in $B_k(P)$ and thus $\langle s_1, s_2 \rangle$ becomes a function on $B_k(P)$. Under the above identification of $B_k(P)$ with $L_k(P)$, we can thus interpret $\langle s_1, s_2 \rangle$ as a function defined on $L_k(P)$.
\begin{defn}
  For any $s_1,s_2 \in H_P^{(k)}$ we define
  \begin{align*}
		(s_1,s_2)_P^{(k)} = \sum_{l \in L_k(P)} \langle s_1, s_2 \rangle(l) .
  \end{align*}
\end{defn}
We observe that $(\cdot,\cdot)^{(k)}_P$ is positive definite.
 Similarly to \cite{A12}, we define the Hermitian structure $[\cdot,\cdot]_P^{(k)}$ determined by 
$P$ by the formula
\[
	[s_1,s_2]^{(k)}_{P,\sigma_0}= (P_\infty(\sigma_0,P)(s_1),P_\infty(\sigma_0,P)(s_2))^{(k)}_P,
\]
for all $s_1,s_2\in H_{\sigma_0}^{(k)}$.   We have the following Theorem analogous to the main result of \cite{A12}.

\begin{theorem}
\label{HS}
The Hermitian structure $[\cdot,\cdot]^{(k)}_P$ is projectively preserved by 
the Hitchin connection and it is projectively invariant under the mapping 
class group action.  
\end{theorem}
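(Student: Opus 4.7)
The plan is to deduce both statements from the compatibility of the limit map $P_\infty(\sigma_0, P)$ with Hitchin parallel transport and with the mapping class group action, together with projective independence of $[\cdot, \cdot]^{(k)}_P$ from $P$, which is the main obstacle and is analogous to the main result of \cite{A12}.

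The first step is to establish, for any $\sigma_0, \sigma_1 \in \T$ and Hitchin parallel transport $P_{\sigma_0, \sigma_1}: H^{(k)}_{\sigma_0} \to H^{(k)}_{\sigma_1}$, the identity
\[
P_\infty(\sigma_1, P) \circ P_{\sigma_0, \sigma_1} = c \cdot P_\infty(\sigma_0, P)
\]
for some nonzero scalar $c$. This is a direct consequence of projective flatness of $\Nabla$ (Theorem \ref{Pflat}): writing $P_\infty(\sigma_i, P) = \lim_{t\to\infty} P_{\sigma_i, \sigma_i^{(t)}}$ for the cylinder-insertion paths $\sigma_i^{(t)}$, projective flatness yields $P_{\sigma_1, \sigma_1^{(t)}} \circ P_{\sigma_0, \sigma_1} = c_t \cdot P_{\sigma_0, \sigma_1^{(t)}}$ for each $t$, and since $\sigma_0^{(t)}, \sigma_1^{(t)}$ both approach the same boundary limit associated to $P$, the corresponding parallel transports from $\sigma_0$ agree projectively in the $t\to\infty$ limit (this is where Theorem \ref{Miso} is used). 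Substituting into the definition of $[\cdot, \cdot]^{(k)}_{P, \sigma_1}$ gives
\[
[P_{\sigma_0,\sigma_1}(s_1), P_{\sigma_0,\sigma_1}(s_2)]^{(k)}_{P, \sigma_1} = |c|^2 \, [s_1, s_2]^{(k)}_{P, \sigma_0},
\]
establishing projective preservation by $\Nabla$.

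For the mapping class group action, the $\Gamma$-equivariance of $\Nabla$ and the naturality of the cylinder-insertion construction (sending insertion along $P$ to insertion along $\phi(P)$) yield, for $\phi \in \Gamma$,
\[
\phi \circ P_\infty(\sigma_0, P) = P_\infty(\phi(\sigma_0), \phi(P)) \circ \phi,
\]
where the left $\phi: H_P^{(k)} \to H_{\phi(P)}^{(k)}$ is an isometry because it preserves the prequantum data $(\nabla, \langle \cdot, \cdot \rangle)$ on $\L$ and bijectively maps the level $k$ Bohr-Sommerfeld fibers of $F_P$ to those of $F_{\phi(P)}$. This intertwining relation forces $\phi$ to take $[\cdot, \cdot]^{(k)}_{P, \sigma_0}$ precisely to $[\cdot, \cdot]^{(k)}_{\phi(P), \phi(\sigma_0)}$.

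The final step, and main obstacle, is the projective independence of $[\cdot, \cdot]^{(k)}_{Q, \sigma}$ from the pair of pants decomposition $Q$. The plan is to identify the orthonormal basis $P_\infty(\sigma, Q)^{-1}(\tilde v_l)$, $l \in L_k(Q)$, of $H^{(k)}_\sigma$ with the Reshetikhin-Turaev orthonormal basis for $Q$ under the Andersen-Ueno isomorphism $I_\Sigma$; the normalization (\ref{eq:normofbasis}) is precisely the Reshetikhin-Turaev norm formula, so this identification is an isometry. Since the change of basis between two pair of pants decompositions in the unitary RT TQFT is unitary (being generated by unitary $F$- and $S$-matrices), the Hermitian structures $[\cdot, \cdot]^{(k)}_{P_1, \sigma}$ and $[\cdot, \cdot]^{(k)}_{P_2, \sigma}$ must agree projectively on $H^{(k)}_\sigma$. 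Combining this with the intertwining from the previous paragraph shows that $\phi$ takes $[\cdot, \cdot]^{(k)}_{P, \sigma_0}$ projectively to $[\cdot, \cdot]^{(k)}_{P, \phi(\sigma_0)}$, completing the proof of projective invariance under the mapping class group action.
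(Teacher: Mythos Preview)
Your first step, deducing projective preservation by the Hitchin connection from projective flatness, is exactly the paper's argument (Theorem~\ref{thm14} in Section~\ref{HSHV}): any path from $\sigma_0$ to $\sigma_1$ can be deformed, up to scalar, into the concatenation of the canonical degeneration path from $\sigma_0$ toward $P$ with the reverse of that from $\sigma_1$, and then the definition of $[\cdot,\cdot]^{(k)}_{P}$ gives the result directly. Your treatment of the $\Gamma$-intertwining is also in line with the paper.

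Where you diverge is in the final step. You invoke a direct identification, under $I_\Sigma$, of the Bohr--Sommerfeld basis $P_\infty(\sigma,Q)^{-1}(\tilde v_l)$ with the Reshetikhin--Turaev factorization basis, and then appeal to unitarity of the RT change-of-basis. The paper instead reduces independence from $Q$ to the elementary Hatcher--Thurston moves and treats the two local models separately: the four-punctured sphere (Section~\ref{4p}, Theorem~\ref{thm12}) and the once-punctured torus (Section~\ref{elliptic}, Theorem~\ref{thm13}). In each case it explicitly identifies the Hitchin connection with the KZ/TUY connection and shows that the limiting Bohr--Sommerfeld decomposition agrees with the TUY factorization decomposition; only then does the appeal to \cite{AU1,AU2,AU3,AU4} and RT unitarity go through. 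Your shortcut is not wrong in spirit, but the global basis identification you assert is precisely what those two local theorems establish; without them, the claim that $I_\Sigma$ matches the two bases is unproven. If you intend your argument to be self-contained, you should either supply that identification or acknowledge that it rests on the content of Sections~\ref{4p} and~\ref{elliptic}.
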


This Theorem is proved in Section \ref{HSHV}. In complete analogy with the case studied in \cite{A12}, we can then similarly conclude that

\begin{theorem}
  \label{IP}
  There exist functions $G^{(k)} \in C^\infty(\calT,C^\infty(M))$, such that
  \begin{align*}
		(s_1,s_2)_\sigma^{(k)} = \int_M \langle s_1, s_2\rangle G_\sigma^{(k)} \frac{\omega^m}{m!}
  \end{align*}
  for $s_1 , s_2 \in H^0(M_\sigma, \calL^k)$, which has the asymptotic expansion
  \begin{align*}
		G_\sigma^{(k)} = \exp(-F_\sigma + O(1/k))
  \end{align*}
  for all $\sigma \in \calT$, where $F_\sigma \in C^\infty(M)$ is the Ricci potential for $(M'_\sigma,\omega)$.
\end{theorem}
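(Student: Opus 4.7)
The plan is to follow, in complete analogy with the companion paper \cite{A12}, the strategy of combining the projective invariance of Theorem \ref{HS} with Berezin-Toeplitz asymptotics. I would proceed in two stages: first establish the existence of a smooth positive density $G_\sigma^{(k)}$ representing $(\cdot,\cdot)_\sigma^{(k)}$ against the standard $L^2$-pairing on $M$, and second identify its leading $1/k$-asymptotics as $\exp(-F_\sigma)$.

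For existence, fix $\sigma \in \calT$. Both $(\cdot,\cdot)_\sigma^{(k)}$ and the $L^2$-pairing $\langle s_1, s_2 \rangle_{L^2} = \int_M \langle s_1, s_2\rangle \omega^m/m!$ are positive definite Hermitian forms on the finite-dimensional space $H^0(M_\sigma, \calL^k)$, and they depend smoothly on $\sigma$. Hence there is a unique positive self-adjoint operator $A_\sigma^{(k)}$ on this space with $(s_1,s_2)_\sigma^{(k)} = \langle A_\sigma^{(k)} s_1, s_2 \rangle_{L^2}$, again smooth in $\sigma$. Invoking the asymptotic completeness of the Toeplitz operator algebra of Bordemann-Meinrenken-Schlichenmaier, as exploited in \cite{A12}, one represents $A_\sigma^{(k)}$ as a Toeplitz operator $T_{G_\sigma^{(k)}}^{(k)}$ with smooth positive symbol $G_\sigma^{(k)}$, producing the asserted integral formula.

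For the asymptotic identification, I would differentiate the projective invariance provided by Theorem \ref{HS} along a smooth path $\sigma_t \in \calT$. Rewriting both sides of the resulting identity via the integral representation from the previous step, and using the explicit form of the Hitchin connection from Section 2 together with its expansion as a first-order differential operator modified by a Toeplitz correction, produces a transport equation for $G_{\sigma_t}^{(k)}$. Extracting the leading $1/k$-contributions yields a first-order PDE on the leading symbol $G_\sigma^{(0)} = \lim_{k \to \infty} G_\sigma^{(k)}$; by the defining property of the Ricci potential in terms of the Ricci form together with the variation of the K\"ahler structure along $\sigma_t$, the unique solution of this PDE, up to a projectively irrelevant multiplicative constant, is $\exp(-F_\sigma)$. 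The main obstacle is precisely this last matching: computing the Toeplitz symbol of Hitchin's second-order operator at first order in $1/k$ and recognizing the transport equation as that solved by the exponentiated Ricci potential. This is the heart of the symbol calculus developed in \cite{A12}, and one would adapt that calculation to the Hermitian structure $[\cdot,\cdot]^{(k)}_P$ studied here.
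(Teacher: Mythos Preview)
Your proposal is correct and matches the paper's own treatment: the paper does not give an independent proof of Theorem~\ref{IP} but simply states that it follows ``in complete analogy with the case studied in \cite{A12}'' once Theorem~\ref{HS} is in hand, and your outline (represent the Hermitian form by a Toeplitz operator, then use the symbol calculus for the Hitchin connection from \cite{A12} to derive the transport equation whose leading solution is $\exp(-F_\sigma)$) is precisely that argument. One small point worth tightening in your write-up is the passage from ``$A_\sigma^{(k)}$ is a Toeplitz operator for some symbol'' to a symbol varying smoothly in $\sigma$: since the Toeplitz map has a kernel at each fixed $k$, you should either make a canonical choice (e.g.\ the covariant/Berezin symbol) or note that only the asymptotic symbol class matters for the conclusion.
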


Suppose now that we have a Heegaard decomposition of a compact 3-manifold $X$, 
i.e. 
\[
 X = H_1 \cup_\Sigma H_2.
\]
In Section \ref{HSHV} we also prove the following theorem.  
\begin{theorem}
\label{Mtheorem}
The Reshetikhin-Turaev invariant $Z^{(k)}(X)$ of $X$ is given by
\[
	Z^{(k)}(X) = c_g^{(k)}[s_{H_1,P_1}^{(k)}(\sigma),s_{H_2,P_2}^{(k)}(\sigma)]^{(k)}_{P_1,\sigma}
\]
for any $\sigma\in \T$ and $P_i$ any pair of pants decomposition of $\Sigma$ 
which is compatible with $H_i$, $i=1,2$ and where $c_g^{(k)}$ is a constant 
that only depends on the genus of $\Sigma$ and the level $k$.   
\end{theorem}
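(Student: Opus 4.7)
The plan is to deduce the formula from the factorization property of the Reshetikhin-Turaev TQFT, together with Theorem~\ref{HS} and the Andersen-Ueno isomorphism $I_\Sigma$. Writing $X = H_1 \cup_\Sigma H_2$, the TQFT gluing axiom gives
\[
Z^{(k)}(X) = \langle Z^{(k)}(H_1), Z^{(k)}(H_2) \rangle^{\mathrm{RT}}_\Sigma,
\]
where $\langle \cdot,\cdot \rangle^{\mathrm{RT}}_\Sigma$ is the Reshetikhin-Turaev Hermitian pairing on $Z^{(k)}(\Sigma)$. In the basis $\{v_l\}_{l\in L_k(P_i)}$ attached to a pair of pants decomposition $P_i$ compatible with $H_i$, the boundary vector $Z^{(k)}(H_i)$ is precisely $v_0$, where $0 \in L_k(P_i)$ is the zero labeling. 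My task is therefore to translate the right hand side of this identity, via $I_\Sigma$, into the Hitchin-geometric side of the picture.

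First I identify, up to a genus-and-level-dependent scalar, the image $I_\Sigma(v_0^{P_i})$ with $s_{H_i,P_i}^{(k)}(\sigma)$. By Definition~\ref{HBV}, the latter is the inverse parallel transport of the Chern-Simons section $s_{H_i,P_i}^{(k)}(P_i) \in H_{P_i}^{(k)}$ over the Bohr-Sommerfeld fiber $h_{P_i}^{-1}(2)$. On the RT/TUY side, combining Laszlo's isomorphism between the Hitchin and the TUY bundles with the factorization rules for conformal blocks in the $t\to\infty$ sewing limit of the family $\sigma_t$ (cylinder stretching along the curves of $P_i$) shows that $P_\infty(\sigma, P_i)$ intertwines the conformal block basis $\{v_l\}_{l\in L_k(P_i)}$ with the Chern-Simons Bohr-Sommerfeld basis of $H^{(k)}_{P_i}$ indexed by $B_k(P_i) = L_k(P_i)$. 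Under this correspondence $v_0$ is sent to $s_{H_i,P_i}^{(k)}(P_i)$ up to a scalar that depends only on $g$ and $k$ (and a root of unity, by Theorem~\ref{Welldefth}).

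Second, I compare the two Hermitian structures on $\P\V^{(k)}(\Sigma)$. By Theorem~\ref{HS}, $[\cdot,\cdot]^{(k)}_P$ is projectively preserved by the Hitchin connection and projectively mapping class group invariant; the same properties are enjoyed by the Reshetikhin-Turaev pairing. The resulting projective pairings on the same projective mapping class group representation must then agree up to a positive scalar, and the normalization formula (\ref{eq:normofbasis}) together with our Definition of $(\cdot,\cdot)^{(k)}_P$ shows that this scalar depends only on the quantum integer data at level $k$ and on the factor $\eta^{1-g}$, hence only on $g$ and $k$; call it $c_g^{(k)}$. Combining this with the first step yields
\[
Z^{(k)}(X) = c_g^{(k)} \bigl[ s_{H_1,P_1}^{(k)}(\sigma), s_{H_2,P_2}^{(k)}(\sigma) \bigr]^{(k)}_{P_1,\sigma},
\]
with independence of $\sigma$ and of the auxiliary choice of $P_2$ following from Theorems~\ref{HS} and~\ref{Welldefth}.

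The main obstacle is the first identification: verifying that $P_\infty(\sigma, P_i)$ carries the conformal block basis to the Chern-Simons Bohr-Sommerfeld basis with scaling constants uniform in the labeling. This requires a careful comparison of the sewing construction of conformal blocks with the Jeffrey-Weitsman real polarization and the Chern-Simons section over the fiber $h_{P_i}^{-1}(2)$, and is the central geometric input assembled from the preceding sections; once it is in hand, all remaining steps of the proof are essentially formal.
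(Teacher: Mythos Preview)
Your proposal is correct and follows essentially the same route as the paper: the paper also starts from the TQFT gluing formula $Z_k(X) = (\calD \Delta^{-1})^m [Z_k(H_1),Z_k(H_2)]$, then uses the Andersen--Ueno isomorphism $I_\Sigma$ together with the factorization/sewing identification of the conformal-block basis with the Bohr--Sommerfeld basis to replace $Z_k(H_i)$ by $s_{H_i,P_i}^{(k)}(\sigma)$ and the RT pairing by $[\cdot,\cdot]^{(k)}_{P_1,\sigma}$. The only additional content in the paper is that it makes the constant explicit as $c_g^{(k)} = (\calD\Delta^{-1})^m$, with $m$ the Maslov-index integer from Turaev's gluing theorem.
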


In Section \ref{1surg} we use Theorem \ref{Mtheorem} to establish the following application.

\begin{theorem}
\label{notS3}
Suppose $X$ is obtained by $1$ surgery on a knot $K$.   If $K$ is not the 
unknot, then there exist a $k$ such that 
\[
 |Z^{(k)}(X)| \neq |Z^{(k)}(S^3)|.
\]
\end{theorem}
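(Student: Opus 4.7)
The plan is to apply Theorems \ref{Mtheorem} and \ref{IP} to express $Z^{(k)}(X)$ as an oscillatory integral on the moduli space $M$, then carry out a large-$k$ stationary phase analysis whose critical set is identified with the space of flat $\SU(2)$-connections on the closed manifold $X$, and finally compare the resulting expansion with that for $S^3$ using an irreducible $\SU(2)$-representation of $\pi_1(X)$ produced by the Kronheimer-Mrowka theorem.

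Fix a Heegaard decomposition $X = H_1 \cup_\Sigma H_2$ together with compatible pair of pants decompositions $P_1, P_2$. Theorem \ref{Mtheorem} gives
\[
 Z^{(k)}(X) = c_g^{(k)}\,\bigl[s_{H_1,P_1}^{(k)}(\sigma),\, s_{H_2,P_2}^{(k)}(\sigma)\bigr]^{(k)}_{P_1,\sigma},
\]
and Theorem \ref{IP} rewrites the pairing as
\[
 Z^{(k)}(X) = c_g^{(k)}\int_M \bigl\langle s_{H_1,P_1}^{(k)}(\sigma),\, s_{H_2,P_2}^{(k)}(\sigma)\bigr\rangle\, G_\sigma^{(k)}\,\frac{\omega^m}{m!},
\]
with $G_\sigma^{(k)} = \exp(-F_\sigma + O(1/k))$. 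By Definition \ref{HBV} and Theorem \ref{Miso} the sections $s_{H_i,P_i}^{(k)}(\sigma)$ are obtained by parallel transport from Chern-Simons sections supported on the isotropic submanifold $N_i \subset M$ of flat connections extending over $H_i$; in the $t\to\infty$ limit they become WKB states on $N_i$ with phase $\exp(2\pi i k\, \CS_{H_i})$. The integrand is therefore an oscillatory symbol of large parameter $k$ whose phase is the difference of the two Chern-Simons sections.

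Stationary phase reduces the integral to a sum over $N_1 \cap N_2$, which is exactly the moduli space of gauge classes of flat $\SU(2)$-connections on $X$. Each nondegenerate critical class $[A]$ contributes
\[
 a_{[A]}\, k^{-d_{[A]}/2}\, e^{2\pi i k\, \CS([A]) + i\psi_{[A]}}\,\bigl(1 + O(1/k)\bigr),
\]
where $a_{[A]}$ is essentially the square root of the Reidemeister torsion at $[A]$, $d_{[A]}$ is determined by the local moduli dimension and $\psi_{[A]}$ is a Maslov-type phase. For $S^3$ only the trivial flat connection exists, so the asymptotic expansion of $|Z^{(k)}(S^3)|$ is a non-oscillating formal series in $k^{-1}$. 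By the Kronheimer-Mrowka theorem, the nontriviality of $K$ yields an irreducible $\SU(2)$-representation $\rho$ of $\pi_1(X)$; after a generic perturbation we may take $[A_\rho]$ isolated and acyclic. Its stationary phase contribution is nonzero with Chern-Simons phase $\CS([A_\rho])\in \bR/\bZ$, producing an additional term in the expansion for $Z^{(k)}(X)$ absent from that of $S^3$. If $\CS([A_\rho])\notin \bZ$ this term oscillates in $k$ and cannot be matched by the $S^3$ asymptotics; if $\CS([A_\rho])\in \bZ$ it contributes a non-oscillating real correction of different order in $k$. In either case $|Z^{(k)}(X)| \neq |Z^{(k)}(S^3)|$ for infinitely many $k$.

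The main obstacle will be executing the stationary phase step rigorously: $M$ is singular along its reducible locus, the trivial connection sits on this stratum with a degenerate transverse Hessian, and the boundary states $s_{H_i,P_i}^{(k)}$ are distributional limits of holomorphic sections. These issues are handled by the standard gauge-theoretic resolution of a neighborhood of the trivial connection (yielding the expected $k^{-3/2}$ prefactor for an integer homology sphere) together with the smoothing built into $G_\sigma^{(k)}$ of Theorem \ref{IP}, which allows one to work with honest holomorphic representatives of the boundary states before taking the $t \to \infty$ limit of Definition \ref{HBV}. The nonvanishing of $a_{[A_\rho]}$ reduces to nonvanishing of the Reidemeister torsion at an isolated acyclic irreducible $\SU(2)$-representation, which is standard.
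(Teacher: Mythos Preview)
Your overall strategy coincides with the paper's: express $Z^{(k)}(X)$ via Theorem~\ref{Mtheorem} and Theorem~\ref{IP} as an oscillatory integral on $M$, identify the critical set with the flat $\SU(2)$-connections on $X$ (the intersection of the two Lagrangians $N_1,N_2$), invoke Kronheimer--Mrowka to produce an irreducible point in this intersection, and argue that the resulting contribution to the asymptotics cannot be matched by the single reducible contribution one sees for $S^3$.

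The substantive divergence is in how you propose to extract the asymptotics at the critical points. You assume (or arrange by ``generic perturbation'') that the irreducible critical point is isolated and nondegenerate, then apply ordinary stationary phase. The paper does not do this, and for good reason: the phase function here is a fixed geometric object (the difference of Chern--Simons sections along the two Lagrangians), and there is no perturbation available within the setup that would make a possibly degenerate critical point nondegenerate without destroying the identification with $Z^{(k)}(X)$. Instead the paper invokes the Varchenko machinery of Newton polyhedra (and the residue formula of Denef--Nicaise--Sargos) to obtain a leading term of the form $\lambda^{s_0}(\ln\lambda)^{\rho-1}$ with a computable nonzero coefficient, valid even when the Hessian is degenerate. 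This is what actually guarantees a nonvanishing contribution from the irreducible representation. Your appeal to ``standard gauge-theoretic resolution'' and to the smoothing in $G_\sigma^{(k)}$ does not address this; those handle the singular stratum of $M$ near the reducible locus, not the possible degeneracy of the phase at an irreducible critical point.

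A second, smaller gap: your endgame case split on whether $\CS([A_\rho])\in\bZ$ is not airtight. If the Chern--Simons value happens to be integral and the irreducible contribution enters at the same power of $k$ as the trivial one, you have not ruled out an exact cancellation in modulus. The paper sidesteps this by arguing directly that the additional intersection point forces a genuinely different asymptotic expansion (different exponents $s_0$ or different $\rho$, or a nonzero coefficient at a new term), rather than by a phase-oscillation dichotomy.
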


This theorem uses the Theorem by Kronheimer and Mrowka \cite{KM}, which states 
that $1$ surgery on a knot yields a three-manifold, which has an irreducible 
representation of its fundamental group to $\SU(2)$, in case the knot is not 
the unknot.  As we argue in Section \ref{1surg}, this has the following corollary.

\begin{corollary}
Suppose that $K$ is a knot.   If the colored Jones polynomial of $K$ are the 
same as those of the unknot, then $K$ is the unknot.  
\end{corollary}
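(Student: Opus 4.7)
The plan is to derive the corollary as a direct consequence of Theorem \ref{notS3} together with the standard Reshetikhin--Turaev surgery formula. Recall that if $X$ is obtained by $1$-surgery on a framed knot $K\subset S^3$, then the level $k$ WRT invariant $Z^{(k)}(X)$ can be expressed as a finite linear combination
\[
Z^{(k)}(X) = \lambda_k \sum_{l=0}^{k} \mu_{l,k}\, J_l(K),
\]
where $J_l(K)$ is the $l$-colored Jones polynomial of $K$ evaluated at $q = e^{2\pi i/(k+2)}$, and the coefficients $\lambda_k$ and $\mu_{l,k}$ depend only on the level $k$, the framing, and the signature correction (in particular, they do not depend on $K$). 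This is the content of the Reshetikhin--Turaev surgery construction via cabling by the Kirby color, and appears in \cite{RT1}, \cite{RT2}, \cite{T}, as well as in the skein-theoretic formulation of \cite{BHMV1}, \cite{BHMV2}, \cite{B1}.

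Given this, the argument is as follows. Suppose $K$ is a knot whose colored Jones polynomials $J_l(K)$ agree, as polynomials in $q$, with those of the unknot $U$ for every $l\ge 0$. Then in particular $J_l(K) = J_l(U)$ upon specialization at every root of unity $q = e^{2\pi i/(k+2)}$ and for every color $l$ appearing in the surgery sum. Letting $X$ denote the result of $1$-surgery on $K$ and $X_0 = S^3$ the result of $1$-surgery on $U$, the surgery formula then yields
\[
Z^{(k)}(X) = Z^{(k)}(X_0) = Z^{(k)}(S^3)
\]
for every level $k$. In particular $|Z^{(k)}(X)| = |Z^{(k)}(S^3)|$ for all $k$.

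Now I would invoke Theorem \ref{notS3}: if $K$ were not the unknot, then there would exist some level $k$ for which $|Z^{(k)}(X)| \ne |Z^{(k)}(S^3)|$, contradicting the equality just established. Hence $K$ must be the unknot, proving the corollary.

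The only genuinely delicate point is ensuring that the surgery formula is being applied with the correct framing: the statement of Theorem \ref{notS3} concerns $1$-surgery, so one should use the $0$-framed longitude plus a meridian (i.e.\ the framing that produces integral surgery coefficient $+1$), and the coefficients $\lambda_k, \mu_{l,k}$ above must be chosen accordingly. Since these coefficients are independent of $K$, however, they cancel identically between $X$ and $S^3$, so no obstruction arises. The Kronheimer--Mrowka input is already absorbed into Theorem \ref{notS3} and needs no further invocation here.
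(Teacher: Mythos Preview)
Your argument is correct and is precisely the deduction the paper has in mind: the surgery formula from Section~\ref{GF} expresses $Z^{(k)}$ of the $1$-surgered manifold as a universal linear combination of colored Jones values, so equality of all colored Jones polynomials forces $|Z^{(k)}(X)|=|Z^{(k)}(S^3)|$ for every $k$, and Theorem~\ref{notS3} then gives the contrapositive. Your remark on framing is also apt---the framing twist contributes a factor depending only on the color and level, not on $K$, so it cancels between $K$ and the unknot.
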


\textbf{Acknowledgements.} We thank Gregor Massbaum, Nicolai Reshetikhin, Bob Penner, S{\o}ren Fuglede J{\o}rgensen, Jakob Lindblad Blaavand, Jens-Jakob Kratmann Nissen and Jens Kristian Egsgaard for helpful discussion.

\section{The Hitchin connection}\label{ghc}\label{sect2}

In this section, we review our construction of the Hitchin connection
using the global differential geometric setting of \cite{A9}. This
approach is close in spirit to Axelrod, Della Pietra and Witten's in
\cite{ADW}, however we do not use any infinite dimensional gauge
theory. In fact, the setting is more general than the gauge theory
setting in which Hitchin in \cite{H} constructed his original
connection. But when applied to the gauge theory situation, we get the
corollary that Hitchin's connection agrees with Axelrod, Della Pietra
and Witten's.

Hence, we start in the general setting and let $(M,\omega)$ be any
compact symplectic manifold.

\begin{definition}\label{prequantumb}
  A prequantum line bundle $(\L, (\cdot,\cdot), \nabla)$ over the
  symplectic manifold $(M,\omega)$ consist of a complex line bundle
  $\L$ with a Hermitian structure $(\cdot,\cdot)$ and a compatible
  connection $\nabla$ whose curvature is
  \begin{align*}
    F_\nabla(X,Y) = [\nabla_X, \nabla_Y] - \nabla_{[X,Y]} = -i \omega
    (X,Y).
  \end{align*}
  We say that the symplectic manifold $(M,\omega)$ is prequantizable
  if there exist a prequantum line bundle over it.
\end{definition}

Recall that the condition for the existence of a prequantum line
bundle is that 
$$\biggl[\frac{\omega}{2\pi}\biggr]\in \Im(H^2(M,\bZ) \ra
H^2(M,\bR)).$$
 Furthermore, the inequivalent choices of prequantum line
bundles (if they exist) are parametriced by $H^1(M,U(1))$ (see
e.g. \cite{Woodhouse}).

We shall assume that $(M,\omega)$ is prequantizable and fix a
prequantum line bundle $(\L, (\cdot,\cdot), \nabla)$.

Assume that $\T$ is a smooth manifold which smoothly parametrizes
K\"{a}hler structures on $(M,\omega)$. This means that we have a
smooth\footnote{Here a smooth map from $\T$ to $C^\infty(M,W)$, for
  any smooth vector bundle $W$ over $M$, means a smooth section of
  $\pi_M^*(W)$ over $\T\times M$, where $\pi_M$ is the projection onto
  $M$. Likewise, a smooth $p$-form on $\T$ with values in
  $C^\infty(M,W)$ is, by definition, a smooth section of
  $\pi_{\T}^*\Lambda^p(\T)\otimes \pi_M^*(W)$ over $\T\times M$. We
  will also encounter the situation where we have a bundle $\tW$ over
  $\T\times M$ and then we will talk about a smooth $p$-form on $\T$
  with values in $C^\infty(M,\tW_\s)$ and mean a smooth section of
  $\pi_{\T}^*\Lambda^p(\T)\otimes \tW$ over $\T\times M$.}  map $I :
\T \ra C^\infty(M,\End(TM))$ such that $(M,\omega, I_\s)$ is a
K\"{a}hler manifold for each $\s\in \T$.

We will use the notation $M_\sigma$ for the complex manifold $(M,
I_\s)$. For each $\s\in \T$, we use $I_\s$ to split the complexified
tangent bundle $TM_\bC$ into the holomorphic and the anti-holomorphic
parts. These we denote by
$$T_{\s} = E(I_\s,i) = \Im(\Id - iI_\s)$$
and
$$\bT_{\s}= E(I_\s,-i) = \Im(\Id + iI_\s)$$
respectively.

The real K\"{a}hler-metric $g_\s$ on $(M_\s,\omega)$, extended complex
linearly to $TM_\bC$, is by definition
\begin{align}
  \label{eq:3}
  g_\s(X,Y) = \omega(X,I_\s Y),
\end{align}
where $X,Y \in C^\infty(M,TM_\bC)$.

The divergence of a vector field $X$ is the unique function
$\delta(X)$ determined by
\begin{align}
  \label{eq:1}
  \mathcal{L}_X \omega^m = \delta(X) \omega^m,
\end{align}
with $m = \dim M$.
It can be calculated by the formula $\delta(X) = \Lambda d (i_X
\omega)$, where $\Lambda$ denotes contraction with the K\"ahler form.
Even though the divergence only depend on the volume, which is
independent of the of the particular K\"ahler structure, it can be
expressed in terms of the Levi-Civita connection on $M_\sigma$ by
$\delta(X) = \tr \nabla_\sigma X$.

Inspired by this expression, we define the divergence of a symmetric
bivector field $$B \in C^\infty(M, S^2(TM_{\bC}))$$ by
\begin{align*}
  \delta_\sigma(B) = \tr \nabla_\sigma B.
\end{align*}
Notice that the divergence of bivector fields does depend on the point
$\sigma \in \mathcal{T}$.

Suppose $V$ is a vector field on $\T$. Then we can differentiate $I$
along $V$ and we denote this derivative by $V[I] : \T \ra
C^\infty(M,\End(TM_\bC))$. Differentiating the equation $I^2 = -\Id$,
we see that $V[I]$ anti-commutes with $I$. Hence, we get that
\[V[I]_\s \in C^\infty(M, (\bT_\s^*\otimes T_\s)\oplus
(T_\s^*\otimes \bT_\s))\] for each $\s\in \T$. Let
\begin{align*}
  V[I]_\s = V[I]'_\s + V[I]''_\s
\end{align*}
be the corresponding decomposition such that $V[I]'_\s\in C^\infty(M,
\bT_\s^*\otimes T_\s)$ and $V[I]''_\s\in C^\infty(M, T_\s^*\otimes
\bT_\s)$.

Now we will further assume that $\T$ is a complex manifold and that
$I$ is a holomorphic map from $\T$ to the space of all complex
structures on $M$.  Concretely, this means that
\[V'[I]_\s = V[I]'_\s\] and
\[V''[I]_\s = V[I]''_\s\] for all $\s\in \T$, where $V'$ means the
$(1,0)$-part of $V$ and $V''$ means the $(0,1)$-part of $V$ over $\T$.

Let us define $\tG(V) \in C^\infty(M , TM_\bC\otimes TM_\bC)$ by
\[V[I] = \tG(V) \omega,\] and define $G(V) \in C^\infty(M, T_\s
\otimes T_\s)$ such that
\[\tG(V) = G(V) + {\overline G(V)} \]
for all real vector fields $V$ on $\T$. 

We see that $\tG$ and $G$ are
one-forms on $\T$ with values in $C^\infty(M , TM_\bC\otimes TM_\bC)$
and $C^\infty(M, T_\s \otimes T_\s)$, respectively.  We observe that
\[V'[I] = G(V)\omega,\] and $G(V) = G(V')$.

Using the relation \eqref{eq:3}, one checks that
\begin{align*}
  \tilde G(V) = - V[g^{-1}],
\end{align*}
where $g^{-1} \in C^\infty(M, S^2(TM))$ is the symmetric bivector
field obtained by raising both indices on the metric tensor.  Clearly,
this implies that $\tG$ takes values in $C^\infty(M , S^2(TM_\bC))$
and thus $G$ takes values in $C^\infty(M, S^2(T_\s))$.

On $\L^k$, we have the smooth family of $\bar\partial$-operators
$\nabla^{0,1}$ defined at $\s\in \T$ by
\[\nabla^{0,1}_\s = \frac12 (1+i I_\s)\nabla.\]
For every $\sigma\in \T$, we consider the finite-dimensional subspace
of $C^\infty(M,\L^k)$ given by
\[H_\sigma^{(k)} = H^0(M_\s, \L^k) = \{s\in C^\infty(M, \L^k)\,| \, 
\nabla^{0,1}_\s s =0 \}.\] Let $\Nablat$ denote the trivial connection
in the trivial bundle $\mathcal{H}^{(k)} = \T\times C^\infty(M,\L^k)$,
and let $\D(M,\L^k)$ denote the vector space of differential operators
on $C^\infty(M,\L^k)$. For any smooth one-form $u$ on $\T$ with values
in $\D(M,\L^k)$, we have a connection $\Nabla$ in $\cH^{(k)}$ given by
$$\Nabla_V = \Nablat_V - u(V)$$
for any vector field $V$ on $\T$.

\begin{lemma}
  The connection $\Nabla$ in $\cH^{(k)}$ preserves the subspaces
  $H^{(k)}_\sigma \subset C^\infty(M,\L^k)$, for all $\sigma \in \T$,
  if and only if
  \begin{equation}
    \frac{i}2  V[I] \nabla^{1,0} s + \nabla^{0,1}u(V)s = 0\label{eqcond}
  \end{equation}
  for all vector fields $V$ on $\T$ and all smooth sections $s$ of
  $H^{(k)}$.
\end{lemma}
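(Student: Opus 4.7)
The plan is to differentiate the defining condition $\nabla^{0,1}_\sigma s = 0$ for sections of the holomorphic subbundle $H^{(k)} \subset \mathcal{H}^{(k)}$ along $V$, and then match the resulting identity against what is needed for the corrected connection $\nabla_V = \Nablat_V - u(V)$ to preserve holomorphicity at every point of $\T$.

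Concretely, I would fix a smooth local section $s$ of $H^{(k)}$, so $\nabla^{0,1}_\sigma s(\sigma) = 0$ for every $\sigma \in \T$. Writing $\nabla^{0,1}_\sigma = \tfrac{1}{2}(1 + iI_\sigma)\nabla$, with $\nabla$ independent of $\sigma$, and using that differentiation of $s$ in the trivial bundle is $\Nablat_V s$, applying $V$ to the equation $\nabla^{0,1}_\sigma s = 0$ yields
\[
\frac{i}{2} V[I]\,\nabla s \;+\; \nabla^{0,1}_\sigma \Nablat_V s \;=\; 0.
\]
The first term simplifies since $\nabla s = \nabla^{1,0}s + \nabla^{0,1}s$ and $\nabla^{0,1}s = 0$, giving $\tfrac{i}{2} V[I]\nabla^{1,0}s$. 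One should check that this term automatically lands in the space of $(0,1)$-forms with values in $\L^k$, which it does because $V[I]$ anticommutes with $I_\sigma$ and hence swaps $T_\sigma \leftrightarrow \bar T_\sigma$; this is consistent with it appearing alongside $\nabla^{0,1}_\sigma \Nablat_V s$.

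Next, I would translate the preservation condition into an equation for $u(V)$. Preservation of $H^{(k)}$ by $\nabla$ means $\nabla^{0,1}_\sigma(\nabla_V s) = 0$ for every $\sigma$ and every local section $s$ of $H^{(k)}$, i.e.,
\[
\nabla^{0,1}_\sigma \Nablat_V s \;=\; \nabla^{0,1}_\sigma u(V)\,s.
\]
Substituting the identity from the previous step gives precisely \eqref{eqcond}. For the converse, one simply reverses these manipulations: if \eqref{eqcond} holds for all $V$ and all $s \in H^{(k)}$, then $\nabla^{0,1}_\sigma \nabla_V s = \nabla^{0,1}_\sigma \Nablat_V s - \nabla^{0,1}_\sigma u(V) s = 0$.

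The only subtlety—and in my view the main point requiring care—is the passage from the pointwise statement (at a given $\sigma$, for a given $s_0 \in H^{(k)}_\sigma$) to the sectional statement (the identity must hold on local sections of $H^{(k)}$). This requires that every element of $H^{(k)}_\sigma$ extends to a local smooth section of $H^{(k)}$ over a neighborhood of $\sigma$, which is a consequence of $H^{(k)}$ being a smooth subbundle of $\mathcal{H}^{(k)}$ with locally constant rank. Granting this (it is standard in the setup of \cite{A9,ADW,H} and should be quoted as such), the rest of the argument is a direct computation with no further obstacles.
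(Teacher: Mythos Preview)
Your argument is correct and is exactly the standard computation one expects here: differentiate the holomorphicity condition $\nabla^{0,1}_\sigma s=0$ along $V$, use that $\nabla$ on $\L^k$ is independent of $\sigma$ so the only $\sigma$-dependence enters through $I_\sigma$, and compare with $\nabla^{0,1}_\sigma(\Nablat_V s - u(V)s)=0$. The paper does not actually prove this lemma in-text but defers to \cite{A9}; your derivation is precisely the argument given there, including the point that $V[I]$ anticommutes with $I_\sigma$ so that $\tfrac{i}{2}V[I]\nabla^{1,0}s$ is of type $(0,1)$, and the remark that the equivalence at the level of sections uses that $H^{(k)}$ is a smooth subbundle so every vector extends locally.
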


This result is not surprising. See \cite{A9} for a proof this
lemma. Observe that if this condition holds, we can conclude that the
collection of subspaces $H^{(k)}_\sigma \subset C^\infty(M,\L^k)$, for
all $\sigma \in \T$, form a subbundle $H^{(k)}$ of $\cH^{(k)}$.

We observe that $u(V'') = 0$ solves \eqref{eqcond} along the
anti-holomorphic directions on $\T$ since
\[V''[I] \nabla^{1,0} s = 0.\] In other words, the $(0,1)$-part of the
trivial connection $\Nablat$ induces a $\bar\partial$-operator on
$H^{(k)}$ and hence makes it a holomorphic vector bundle over $\T$.

This is of course not in general the situation in the $(1,0)$-direction. Let us now consider a particular $u$ and prove that it
solves \eqref{eqcond} under certain conditions.

On the K\"{a}hler manifold $(M_\s,\omega)$, we have the K\"{a}hler
metric and we have the Levi-Civita connection $\nabla$ in $T_\s$. We
also have the Ricci potential $F_\s\in C^\infty_0(M,\bR)$. here
\[C^\infty_0(M,\bR) = \left\{ f\in C^\infty(M,\bR) \mid \int_M f
  \omega^m = 0\right\}. \] The Ricci potential is the element of
$F_\s\in C^\infty_0(M,\bR)$ which satisfies
\[\Ric_\s = \Ric_\s^H + 2 i \partial_\s\dbar_\s F_\s,\]
where $\Ric_\s\in \Omega^{1,1}(M_\s)$ is the Ricci form and
$\Ric_\s^H$ is its harmonic part. In this way we get a
smooth function $F : \T \ra C^\infty_0(M,\bR)$.

For any symmetric bivector field $B\in C^\infty(M, S^2(TM))$ we get a
linear bundle map
\begin{align*}
  B \colon TM^* \ra TM
\end{align*}
given by contraction. In particular, for a smooth function $f$ on $M$,
we get a vector field 
$$B d f \in C^\infty(M,TM).$$

We define the operator
\begin{eqnarray*}
  \Delta_B &: &C^\infty(M,\L^k) \xrightarrow{\nabla} C^\infty(M,TM^*\otimes\L^k)
  \xrightarrow{B\otimes\Id}
  C^\infty(M,TM \otimes \L^k) \\
  && \qquad \xrightarrow{\nabla_\s\otimes \Id +
    \Id\otimes \nabla}
  C^\infty(M,TM^* \otimes TM \otimes\L^k)
  \xrightarrow{\tr} C^\infty(M,\L^k).
\end{eqnarray*}
Let's give a more concise formula for this operator.  Define the
operator
\begin{align*}
  \nabla^2_{X,Y} = \nabla_X \nabla_Y - \nabla_{\nabla_X Y},
\end{align*}
which is tensorial and symmetric in the vector fields $X$ and
$Y$. Thus, it can be evaluated on a symmetric bivector field and we
have
\begin{align*}
  \Delta_B = \nabla^2_B + \nabla_{\delta(B)}.
\end{align*}

Putting these constructions together, we consider, for some $n\in \bZ$
such that $2k+n \neq 0$, the following operator
\begin{equation}
  u(V) = \frac1{k+n/2}o(V) - V'[F],\label{equ}
\end{equation}
where
\begin{equation}
  o(V) = - \frac{1}{4} (\Delta_{G(V)} + 2\nabla_{G(V)dF} - 2n V'[F]).\label{eqo}
\end{equation}

The connection associated to this $u$ is denoted $\Nabla$, and we call
it the {\em Hitchin connection} in $\cH^{(k)}$. Following \cite{A9}, we now introduce the notion of a rigid family of K\"{a}hler structures.

\begin{definition}\label{Ridig}
  We say that the complex family $I$ of K\"{a}hler structures on
  $(M,\omega)$ is {\em rigid} if
  \[\dbar_\sigma (G(V)_\sigma) = 0 \]
  for all vector fields $V$ on $\T$ and all points $\sigma\in \T$.
\end{definition}

We will assume our holomorphic family $I$ is rigid. There are plenty
of examples of rigid holomorphic families of complex structures, see
e.g. \cite{AGL}.

\begin{theorem}\label{HCE}
  Suppose that $I$ is a rigid family of K\"{a}hler structures on the
  compact, prequantizable symplectic manifold $(M,\omega)$ which
  satisfies that there exists an $n\in \bZ$ such that the first Chern
  class of $(M,\omega)$ is $n [\frac{\omega}{2\pi}]\in H^2(M,\bZ)$ and
  $H^1(M,\bR) = 0$. Then $u$ given by \eqref{equ} and \eqref{eqo}
  satisfies \eqref{eqcond} for all $k$ such that $2k+n \neq 0$.
\end{theorem}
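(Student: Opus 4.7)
The plan is to verify the defining identity \eqref{eqcond} by substituting the specific $u(V)$ from \eqref{equ}--\eqref{eqo} and reducing the problem to a commutator calculation which closes up by virtue of the rigidity assumption together with the Chern class hypothesis.

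Since $G(V)$ and $V'[F]$ depend only on $V'$, the proposed $u$ automatically satisfies $u(V'')=0$ in the anti-holomorphic directions, and \eqref{eqcond} is trivially verified there because $V''[I]\nabla^{1,0}s = 0$ by the type decomposition. I may therefore assume $V$ is of type $(1,0)$ throughout. In this case $V[I] = G(V)\omega$, so the first term of \eqref{eqcond} is $\tfrac{i}{2}\,G(V)\omega\,\nabla^{1,0}s$, and the goal reduces to showing that $\nabla^{0,1}u(V)s$ produces exactly the negative of this expression when $s\in H^{(k)}_\sigma$.

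I would next expand $\nabla^{0,1}u(V)s$ term by term, exploiting $\nabla^{0,1}s = 0$ so that everything collapses into commutators. For the Laplacian piece, $\nabla^{0,1}\Delta_{G(V)}s = [\nabla^{0,1},\Delta_{G(V)}]s$; here rigidity $\dbar_\sigma G(V)_\sigma = 0$ lets $\nabla^{0,1}$ slide past the symmetric bivector field, reducing the task to commuting $\nabla^{0,1}$ through the two covariant derivatives in $\Delta_{G(V)} = \nabla^2_{G(V)} + \nabla_{\delta(G(V))}$. Two curvature contributions appear: the prequantum curvature $F_\nabla = -i\omega$ acting on $\L^k$ produces a term proportional to $-ik\,G(V)\omega\,\nabla^{1,0}s$, while the Levi-Civita curvature, once contracted, produces a term of the shape $\nabla_{\Ric_\sigma(G(V))}s$. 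A parallel but simpler computation handles $\nabla^{0,1}\nabla_{G(V)dF}s$ and $\nabla^{0,1}(V'[F]\,s) = \dbar_\sigma(V'[F])\otimes s$.

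The final step is to invoke the Chern class hypothesis. Since $c_1(M,\omega) = n[\omega/(2\pi)]$ and $\omega$ is harmonic on every $M_\sigma$, the harmonic part of the Ricci form equals $n\omega$, so
\[
\Ric_\sigma = n\omega + 2i\,\partial_\sigma\dbar_\sigma F_\sigma,
\]
where the existence and cohomological uniqueness of $F_\sigma$ are guaranteed by the $\partial\dbar$-lemma together with $H^1(M,\bR) = 0$. Substituting this identity splits the Ricci contribution into a piece proportional to $n\,G(V)\omega\,\nabla^{1,0}s$ and a piece involving $G(V)\,\partial\dbar F_\sigma$. The $n\omega$ piece combines with the prequantum $-ik\omega$ piece into a single overall coefficient of shape $k+n/2$, which is precisely the reason for the normalization $\tfrac{1}{k+n/2}$ in \eqref{equ}; the $\partial\dbar F$ piece, together with the contributions from $\nabla_{G(V)dF}$ and $\dbar_\sigma(V'[F])$, cancels against the $-V'[F]$ and $-2n V'[F]$ terms in \eqref{equ} and \eqref{eqo}. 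The main obstacle is the careful bookkeeping of these curvature insertions: tracking the symmetrization conventions in $\nabla^2_{G(V)}$, correctly handling the two prequantum curvature insertions that arise inside the second-order term, and checking that the Ricci-potential identity really does convert every surviving expression into a multiple of the single desired term $\tfrac{i}{2}\,G(V)\omega\,\nabla^{1,0}s$.
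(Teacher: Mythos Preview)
Your approach is essentially the one the paper takes: compute $\nabla^{0,1}u(V)s$ by commuting $\nabla^{0,1}$ through each piece of $u(V)$, use rigidity so that $\dbar_\sigma G(V)=0$ lets the antiholomorphic derivative pass the bivector, pick up the prequantum curvature as a $k$-proportional term and the Ricci curvature as an $n$-proportional term via $c_1(M,\omega)=n[\omega/2\pi]$, and then argue that the $F$-dependent remainders cancel. The paper organises exactly this calculation into Lemmas~\ref{dbarl}, \ref{Vriccipot} and~\ref{lemma4}.

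There is one point where your sketch misplaces a hypothesis. You invoke $H^1(M,\bR)=0$ for the existence and uniqueness of the Ricci potential $F_\sigma$; but on a compact K\"ahler manifold the $\partial\bar\partial$-lemma already gives $F_\sigma$ (normalised by $\int_M F_\sigma\,\omega^m=0$) with no assumption on $H^1$. The genuine use of $H^1(M,\bR)=0$ is elsewhere, namely in establishing the identity
\[
4i\,\bar\partial_\sigma(V'[F]_\sigma)\;=\;2\,(G(V)\,dF)_\sigma\,\omega \;+\;\delta_\sigma(G(V))_\sigma\,\omega,
\]
which is Lemma~\ref{Vriccipot}. The paper proves this by first showing (Lemma~\ref{lemma4}) that the difference of the two sides is a $(0,1)$-form that is both $\partial_\sigma$- and $\bar\partial_\sigma$-closed, hence closed; the hypothesis $H^1(M,\bR)=0$ then forces it to be exact, and an exact $(0,1)$-form on a compact K\"ahler manifold vanishes. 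Your final paragraph asserts that the ``$\partial\bar\partial F$ piece, together with the contributions from $\nabla_{G(V)dF}$ and $\bar\partial_\sigma(V'[F])$, cancels'' --- that cancellation is precisely this identity, and it is not a consequence of the Ricci-potential equation alone. So your outline is correct in shape, but the step you label as bookkeeping hides the one place where $H^1=0$ is actually doing work.
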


Hence, the Hitchin connection $\Nabla$ preserves the subbundle
$H^{(k)}$ under the stated conditions. Theorem \ref{HCE} is
established in \cite{A9} through the following three lemmas.

\begin{lemma} \label{dbarl} Assume that the first Chern class of
  $(M,\omega)$ is $n [\frac{\omega}{2\pi}]\in H^2(M,\bZ)$. For any
  $\s\in \T$ and for any $G\in H^0(M_\s, S^2(T_\s))$, we have the
  following formula
  \begin{align*}
    \nabla^{0,1}_\s (\Delta_G(s) + 2 \nabla_{G d F_\s}(s)) = - i (2 k
    + n) \omega G \nabla (s) + 2 ik \omega (G dF_\sigma)s + ik \omega
    \delta_\sigma(G) s,
  \end{align*}
  for all $s\in H^0(M_\s, \L^k)$.
\end{lemma}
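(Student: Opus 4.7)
The plan is to compute $\nabla^{0,1}_\sigma$ of each term on the left and match against the right by tracking the commutators carefully. The two main inputs are that $s$ is holomorphic, so $\nabla^{0,1}_\sigma s = 0$, and that rigidity of the family forces $\bar\partial_\sigma G = 0$, so in local holomorphic coordinates $G = G^{ij}\partial_i \partial_j$ with $G^{ij}$ holomorphic. Since the curvature of $\nabla$ on $\calL^k$ is $-ik\omega$, for any $(1,0)$ vector $X$ and $(0,1)$ vector $\bar Y$ we have
\[
[\nabla^{\calL^k}_{\bar Y}, \nabla^{\calL^k}_X] = -ik\,\omega(\bar Y, X),
\]
and similarly the Levi-Civita connection on $T_\sigma$ contributes a $(1,1)$-curvature that contracts with a holomorphic bivector to produce the Ricci endomorphism.

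The main calculation is to expand $\Delta_G(s) = \nabla^2_G s + \nabla_{\delta_\sigma(G)} s$ in local holomorphic coordinates, write $\nabla^2_G s = G^{ij}(\nabla_i\nabla_j - \nabla_{\nabla_i\partial_j})s$, and then apply $\nabla^{0,1}_\sigma$ term by term. When $\nabla_{\bar k}$ passes through a $\nabla_j$ or $\nabla_i$ and finally reaches $s$, it either annihilates $s$ or produces a line-bundle curvature factor $-ik\omega$; the intermediate commutator $[\nabla_{\bar k}, \nabla_j]$ acting on the $T_\sigma$-valued expression $G\nabla s$ yields a line-bundle curvature piece proportional to $\omega G\nabla s$ together with a tangent-bundle curvature piece. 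This latter piece, after tracing against $G$, is exactly the Ricci endomorphism of $G$. Collecting the line-bundle curvature contributions from the two derivatives produces the $-i(2k+n)\omega G\nabla(s)$ term (the $n$ being absorbed later), while the divergence term $\nabla_{\delta_\sigma(G)}$ accounts for the $ik\omega\delta_\sigma(G)s$ piece via commuting $\nabla^{0,1}$ past a single $\nabla$.

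The role of the hypothesis $c_1(M)=n[\omega/2\pi]$ and of the correction $2\nabla_{GdF_\sigma}$ enters here: the Ricci endomorphism arising from the tangent-bundle curvature is, by the definition of the Ricci potential, given by $\Ric_\sigma = n\omega + 2i\partial_\sigma\bar\partial_\sigma F_\sigma$. The $n\omega$ part combines with the line-bundle curvature contributions to upgrade $-i(2k)\omega G\nabla(s)$ to $-i(2k+n)\omega G\nabla(s)$. The $2i\partial\bar\partial F_\sigma$ part is precisely the $\bar\partial$ of $dF_\sigma$ in the $(1,0)$ direction, which is what $\nabla^{0,1}_\sigma(2\nabla_{GdF_\sigma}s)$ cancels against, leaving behind the residual term $2ik\omega(GdF_\sigma)s$ (produced, as before, by commuting $\nabla^{0,1}$ past the single $\nabla_{GdF_\sigma}$ and using that $GdF_\sigma$ is holomorphic to first order in the relevant sense).

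The principal technical obstacle is bookkeeping: one must keep straight three different curvature contributions (the line-bundle curvature hit twice, the line-bundle curvature hit once, and the tangent-bundle curvature), confirm that the $\delta_\sigma(G)$ piece accounts precisely for the trace of the Christoffel correction $\nabla_{\nabla_i\partial_j}$, and verify that the correction term $2\nabla_{GdF_\sigma}$ kills exactly the $\partial\bar\partial F_\sigma$ contribution from the Ricci curvature while leaving the advertised $2ik\omega(GdF_\sigma)s$ remainder. Once this is done, rigidity ($\bar\partial_\sigma G=0$) ensures there are no further terms from differentiating $G$, and the identity claimed in the lemma falls out.
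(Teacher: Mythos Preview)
The paper does not actually prove this lemma in the text; it is one of three lemmas quoted from \cite{A9}, and only the derivation of Lemma~\ref{Vriccipot} from Lemma~\ref{lemma4} is spelled out here. That said, your outline is the standard argument (the one in \cite{A9} and, in slightly different language, in Hitchin's original paper), and it is essentially correct.

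Two small points are worth tightening. First, you attribute $\bar\partial_\sigma G = 0$ to ``rigidity of the family''. In this lemma, holomorphicity of $G$ is simply the hypothesis $G\in H^0(M_\sigma,S^2(T_\sigma))$; rigidity is the separate assumption on the family $I$ that guarantees the particular bivector $G(V)$ coming from a variation of complex structure lands in this space. The distinction is logical rather than mathematical, but it is how the paper is organized.

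Second, your description of where the Ricci contribution arises is slightly imprecise. When you commute $\nabla_{\bar k}$ through $\nabla^2_G$, the tangent-bundle curvature produces a term $G^{ij}R^{l}_{\ ij\bar k}\nabla_l s$. Separately, commuting $\nabla_{\bar k}$ through $\nabla_{\delta_\sigma(G)}$ requires computing $\partial_{\bar k}(\delta_\sigma(G))^j$, which (since $G$ is holomorphic) reduces to $(\partial_{\bar k}\Gamma^{i}_{il})G^{lj} + (\partial_{\bar k}\Gamma^{j}_{il})G^{il}$. The second of these exactly cancels the curvature term from $\nabla^2_G$ (using the K\"ahler symmetry of $R$), and it is the first, $\partial_{\bar k}\Gamma^{i}_{il} = -\Ric_{l\bar k}$, that yields the surviving Ricci contribution. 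Your narrative suggests the Ricci term comes directly from tracing the curvature against $G$ in the $\nabla^2_G$ computation, which is not quite how the cancellation pattern works. Once this is straightened out, the rest of your outline (splitting $\Ric = n\omega + 2i\partial\bar\partial F$, absorbing $n\omega$ into the coefficient, and using $2\nabla_{GdF_\sigma}$ to cancel the $\partial\bar\partial F$ piece while leaving the line-bundle curvature remainder $2ik\omega(GdF_\sigma)s$) goes through exactly as you describe.
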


\begin{lemma}\label{Vriccipot}
  We have the following relation
  \begin{align*}
    4i \bar\partial_\s (V'[F]_\s) = 2 (G(V) dF)_\sigma \omega +
    \delta_\sigma (G(V))_\sigma \omega,
  \end{align*}
  provided that $H^1(M,\bR) = 0$.
\end{lemma}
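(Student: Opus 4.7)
The plan is to differentiate the defining relation for the Ricci potential,
\begin{equation*}
    \Ric_\sigma \;=\; \Ric^H_\sigma + 2i\,\partial_\sigma\bar\partial_\sigma F_\sigma,
\end{equation*}
along a $(1,0)$-vector field $V'$ on $\T$, and then isolate the term $\bar\partial_\sigma V'[F]_\sigma$ that appears on the left hand side of the lemma.

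First I would compute $V'[\Ric_\sigma]$ and $V'[\Ric^H_\sigma]$ separately. For the former, starting from the local K\"ahler expression $\Ric_\sigma = -i\partial_\sigma\bar\partial_\sigma\log\det g_\sigma$ and using the relation $V[g^{-1}] = -\tilde G(V)$ recorded in Section~\ref{sect2}, a direct local calculation writes $V'[\Ric_\sigma]$ as a $(1,1)$-form involving $\nabla_\sigma G(V)$, its trace $\delta_\sigma(G(V))$, and the contraction $G(V)dF$. For the harmonic piece, the class $[\Ric_\sigma] = 2\pi c_1(M)\in H^2(M,\bbR)$ is $\sigma$-independent, so $V'[\Ric^H_\sigma]$ is a closed exact real $(1,1)$-form, and the $\partial\bar\partial$-lemma on the compact K\"ahler manifold $M_\sigma$ writes it as $2i\partial_\sigma\bar\partial_\sigma h_\sigma$ for a unique real $h_\sigma$ with $\int_M h_\sigma\omega^m=0$. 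The assumption $H^1(M,\bbR)=0$ enters here in forcing $H^{0,1}(M_\sigma)=0$, which is needed to pin down the variation of the Hodge projection uniquely rather than up to an ambiguity in $\bar\partial_\sigma$-closed $(0,1)$-forms.

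Next I would track the $\sigma$-dependence of the Dolbeault operators. For a $\sigma$-dependent function $f_\sigma$, writing $\bar\partial_\sigma f_\sigma$ as the $(0,1)_\sigma$-projection of the $\sigma$-independent one-form $df_\sigma$ and differentiating gives
\begin{equation*}
    \bar\partial_\sigma V'[f_\sigma] \;=\; V'[\bar\partial_\sigma f_\sigma] - \tfrac{i}{2}\,V'[I]\bigl(df_\sigma\bigr),
\end{equation*}
and substituting $V'[I] = G(V)\omega$ from Section~\ref{sect2} converts the correction term, with $f=F$, into precisely a multiple of $(G(V)dF)_\sigma\omega$, reproducing the first term on the right hand side of the lemma. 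A parallel calculation for $\partial_\sigma$ and for the mixed operator $\partial_\sigma\bar\partial_\sigma$, combined with the expressions for $V'[\Ric_\sigma]$ and $V'[\Ric^H_\sigma]$ obtained in the previous step, reduces the statement to a tensorial identity in which the remaining divergence contributions assemble into $\delta_\sigma(G(V))\omega$ with the stated coefficient.

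The main obstacle is the bookkeeping: several $\sigma$-dependent objects ($\partial_\sigma$, $\bar\partial_\sigma$, the Hodge projection onto harmonics, $\nabla_\sigma$, and $\det g_\sigma$) all vary along $V'$, and all of the resulting cross-terms must combine to produce exactly the coefficients $4i$, $2$ and $1$ of the lemma. The hypothesis $H^1(M,\bbR)=0$ is used once, to solve the Hodge-theoretic equation determining $V'[\Ric^H_\sigma]$ uniquely; without it the identity would hold only modulo a harmonic ambiguity incompatible with writing $\bar\partial_\sigma V'[F]$ as a single globally defined $(0,1)$-form.
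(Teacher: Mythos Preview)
Your sketch has a real gap, and it is tied precisely to the role of the hypothesis $H^1(M,\bbR)=0$.

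The paper's argument does start, as you do, by differentiating $\Ric = \Ric^H + 2i\partial\bar\partial F$ along $V'$.  Under the standing assumption $c_1(M,\omega)=n[\omega/2\pi]$ one has $\Ric^H = n\omega$, which is $\sigma$-independent, so $V'[\Ric^H]=0$ trivially; no Hodge-projection analysis is needed for that term.  Combining the resulting expression with Lemma~\ref{lemma4}, which computes $2(V'[\Ric])^{1,1} = \partial(\delta(G(V))\omega)$, yields only
\[
\partial\bigl(\,2(G(V)dF)\omega + \delta(G(V))\omega - 4i\,\bar\partial V'[F]\,\bigr) = 0,
\]
i.e.\ the identity after applying $\partial$, not the identity itself.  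The remaining step is cohomological: the $(0,1)$-form in parentheses is now $\partial$-closed by the above, and separately $\bar\partial$-closed by Lemma~\ref{dbarl}; hence $d$-closed.  The hypothesis $H^1(M,\bbR)=0$ is invoked exactly here, to conclude that a closed $(0,1)$-form must vanish.

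Your plan instead asserts that direct bookkeeping will ``reduce the statement to a tensorial identity''.  This cannot be right as stated: the Ricci potential $F$, and hence $V'[F]$, is determined by a global elliptic equation together with the normalization $\int_M F\,\omega^m=0$; there is no purely local formula for $\bar\partial V'[F]$, so pointwise tensor algebra alone will not produce it.  What the local computation gives you is a formula for $\partial\bar\partial V'[F]$ (a second-order expression), which is exactly the $\partial$-differentiated identity above.  Passing back to a statement about $\bar\partial V'[F]$ itself requires the global vanishing step.

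In short: your identification of where $H^1(M,\bbR)=0$ enters is misplaced (it is not about uniqueness of the harmonic projector, which is irrelevant once $\Ric^H=n\omega$), and the ``tensorial identity'' you expect at the end does not exist.  The missing ingredients are Lemma~\ref{lemma4} for the $(1,1)$-part of $V'[\Ric]$, Lemma~\ref{dbarl} for $\bar\partial$-closedness, and the final cohomological argument that a closed $(0,1)$-form on $M_\sigma$ vanishes when $H^1(M,\bbR)=0$.
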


\begin{lemma}\label{lemma4}
  For any smooth vector field $V$ on $\T$, we have that
  \begin{equation}
    2(V'[\Ric])^{1,1} =  \partial (\delta(G(V)) \omega).
  \end{equation}
\end{lemma}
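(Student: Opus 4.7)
The plan is a direct local computation in holomorphic coordinates on $M_\sigma$, in the spirit of \cite{A9}. Starting from the standard Kähler-geometric formula
\[
\Ric_\sigma = -i\,\partial_\sigma \bar\partial_\sigma \log\det(g_\sigma),
\]
where $g_\sigma(X,Y) = \omega(X, I_\sigma Y)$, I would differentiate along $V'$. Since $I$ is assumed to be holomorphic in $\sigma$, the variation $V'[I]$ lies in $C^\infty(M, \bar T^*_\sigma \otimes T_\sigma)$ and satisfies $V'[I] = G(V)\omega$ by construction. This controls both $V'[\partial_\sigma]$ and $V'[\bar\partial_\sigma]$, while the variation of $\log\det g_\sigma$ is $\operatorname{tr}(g^{-1}V'[g_\sigma])$.

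Next I would evaluate $V'[g_\sigma](X,Y) = \omega(X, V'[I]Y)$ using $V'[I] = G(V)\omega$. The resulting trace $\operatorname{tr}(g_\sigma^{-1}V'[g_\sigma])$ can be rewritten, on account of the definition $\delta_\sigma(B) = \operatorname{tr}\nabla_\sigma B$ and the Kähler property of $\nabla_\sigma$, as a constant multiple of $\delta(G(V))$. This identifies the scalar piece entering the variation of the Ricci form.

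Finally I would extract the $(1,1)$-part of $V'[\Ric]$ with respect to $I_\sigma$. The terms arising from $V'[\partial_\sigma]$ and $V'[\bar\partial_\sigma]$ land in the $(2,0)$ and $(0,2)$ pieces (by the bidegree shift of $V'[I]$), while the contribution to the $(1,1)$-component comes exclusively from $-i\,\partial_\sigma\bar\partial_\sigma V'[\log\det g_\sigma]$, which after the identification above becomes a multiple of $\partial_\sigma\bar\partial_\sigma\delta(G(V))$. Reassembling with the closedness of $\omega$ (so that e.g.\ $\partial_\sigma(\delta(G(V))\omega) = \partial_\sigma\delta(G(V)) \wedge \omega$ and the Kähler identity $\partial_\sigma\bar\partial_\sigma f \wedge \omega^{m-1}$ controls $\partial_\sigma\bar\partial_\sigma f$) reproduces the right-hand side $\partial(\delta(G(V))\omega)$ up to the factor of $2$.

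The main obstacle is purely bookkeeping: both the differential operators $\partial_\sigma, \bar\partial_\sigma$ and the metric $g_\sigma$ depend on $\sigma$, so one must carefully distribute the derivative $V'$ among them and then project onto the $(1,1)$-component with respect to $I_\sigma$ itself, which is also varying. The nontrivial cancellations — in particular that the pieces coming from differentiating the operators themselves contribute only to the off-type components — are what makes the statement clean. Once this is sorted out, the identification with $\delta(G(V))$ through $\operatorname{tr}\nabla_\sigma G(V)$ and the use of $d\omega = 0$ make the final comparison essentially formal.
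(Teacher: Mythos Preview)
The paper does not actually prove Lemma~\ref{lemma4}: it is one of three lemmas quoted from \cite{A9} in order to explain the construction of the Hitchin connection, and only the derivation of Lemma~\ref{Vriccipot} \emph{from} Lemma~\ref{lemma4} is spelled out here. So there is no proof in this paper to compare your proposal against. Your overall strategy---vary the local formula $\Ric_\sigma=-i\,\partial_\sigma\bar\partial_\sigma\log\det g_\sigma$ along $V'$ and sort the result by Hodge type---is the natural approach and is indeed the one taken in \cite{A9}.

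That said, there is a genuine gap in your sketch. You assert that the scalar $\operatorname{tr}(g_\sigma^{-1}V'[g_\sigma])$ ``can be rewritten\ldots as a constant multiple of $\delta(G(V))$.'' This is a type mismatch: $\operatorname{tr}(g^{-1}V'[g])$ is a function on $M$, whereas $\delta_\sigma(G(V))=\operatorname{tr}\nabla_\sigma G(V)$ is a section of $T_\sigma$ (a $(1,0)$ vector field). They cannot be constant multiples of one another. The actual content of the lemma lies precisely in relating the $(0,1)$-form $\bar\partial_\sigma\bigl(\operatorname{tr}(g^{-1}V'[g])\bigr)$ to the $(0,1)$-form $\delta(G(V))\,\omega$, and this requires an honest local computation with Christoffel symbols (or an equivalent intrinsic argument) rather than an appeal to the definition of $\delta_\sigma$. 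You have identified the right ingredients, but the step you describe as ``essentially formal'' is in fact the substance of the proof. Your bidegree bookkeeping for the terms coming from $V'[\partial_\sigma]$ and $V'[\bar\partial_\sigma]$ also needs care, since these operators act on forms whose bigrading is itself $\sigma$-dependent; the claim that they contribute only off-type pieces is correct but not immediate.
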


Let us here recall how Lemma \ref{Vriccipot} is derived from Lemma
\ref{lemma4}. By the definition of the Ricci potential
\[\Ric = \Ric^H + 2 i \partial \bar \partial F,\]
where $\Ric^H = n\omega$ by the assumption $c_1(M, \omega) =
n[\frac{\omega}{2\pi}]$. Hence
\[V'[\Ric] = - d V'[I] d F + 2 i d \bar \partial V'[F],\] and
therefore
\[ 4i \partial \bar \partial V'[F] = 2(V'[\Ric])^{1,1} + 2\partial
V'[I] d F.\] From the above, we conclude that
$$ (2 (G(V) d F) \omega + \delta(G(V)) \omega -
4i \bar \partial V'[F])_\sigma \in \Omega^{0,1}_\s(M)$$ is a
$\partial_\sigma$-closed one-form on $M$. From Lemma \ref{dbarl}, it
follows that it is also $\bar \partial_\sigma$-closed, hence it must
be a closed one-form. Since we assume that $H^1(M,\bR) = 0$, we see
that it must be exact. But then it in fact vanishes since it is of
type $(0,1)$ on $M_\s$.

From the above we conclude that
$$
u(V) = \frac1{k+n/2}o(V) - V'[F] = - \frac1{4k+2n} \bigl(
  \Delta_{G(V)} + 2\nabla_{G(V)dF} + 4k V'[F]\bigr)
$$
solves \eqref{eqcond}. Thus we have established Theorem \ref{HCE} and
hence also provided an alternative proof of Theorem \ref{MainGHCI}.

In \cite{AGL} we use half-forms and the metaplectic correction to
prove the existence of a Hitchin connection in the context of
half-form quantization. The assumption that the first Chern class of
$(M,\omega)$ is $n [\frac{\omega}{2\pi}]\in H^2(M,\bZ)$ is then 
replaced by the vanishing of the second Stiefel-Whitney class of $M$
(see \cite{AGL} for more details).

Suppose $\Gamma$ is a group which acts by bundle automorphisms of $\L$
over $M$ preserving both the Hermitian structure and the connection in
$\L$. Then there is an induced action of $\Gamma$ on $(M,\omega)$. We
will further assume that $\Gamma$ acts on $\T$ and that $I$ is
$\Gamma$-equivariant. In this case we immediately get the following
invariance.

\begin{lemma}
  The natural induced action of $\Gamma$ on $\cH^{(k)}$ preserves the
  subbundle $H^{(k)}$ and the Hitchin connection.
\end{lemma}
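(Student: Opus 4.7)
The plan is to reduce the statement to the $\Gamma$-equivariance of the one-form $u$ that defines the Hitchin connection, and then verify that equivariance by tracking how each geometric ingredient of $u$ behaves under the $\Gamma$-action.

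First I would spell out the induced action: $\gamma\in\Gamma$ acts on $\cH^{(k)}=\T\times C^\infty(M,\L^k)$ by $(\sigma,s)\mapsto(\gamma\cdot\sigma,\gamma\cdot s)$, where the action on sections uses the given lift of $\gamma$ to $\L^k$. Since $\gamma$ preserves $\nabla$ and $I$ is $\Gamma$-equivariant, one checks directly that
\[
\gamma\cdot(\nabla^{0,1}_\sigma s)=\nabla^{0,1}_{\gamma\cdot\sigma}(\gamma\cdot s),
\]
so $\gamma$ sends $H^{(k)}_\sigma$ to $H^{(k)}_{\gamma\cdot\sigma}$. Consequently $H^{(k)}\subset\cH^{(k)}$ is a $\Gamma$-invariant subbundle.

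Next I would verify that $u$ is $\Gamma$-invariant as a one-form on $\T$ with values in differential operators on $C^\infty(M,\L^k)$. The trivial connection $\Nablat$ is automatically preserved by the diagonal $\Gamma$-action on $\T\times C^\infty(M,\L^k)$, so it suffices to show $\gamma\cdot u(V)_\sigma=u(\gamma_*V)_{\gamma\cdot\sigma}$ for every vector field $V$ on $\T$. Every ingredient of $u$ depends naturally on the $\Gamma$-equivariant data $(\omega,I_\sigma,\L,\nabla)$: the K\"ahler metric $g_\sigma=\omega(\cdot,I_\sigma\cdot)$, its Levi-Civita connection, the Ricci form and its harmonic part are each equivariant, and hence so is the Ricci potential $F_\sigma$ by the uniqueness of the solution in $C^\infty_0(M,\bR)$ of its defining equation. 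Differentiation of $I$ along $V$ gives $(\gamma_*V)[I]_{\gamma\cdot\sigma}=\gamma_*(V[I]_\sigma)$, whence $G(V)$, $G(V)dF$, the divergence $\delta_\sigma(G(V))$ and the second-order operator $\Delta_{G(V)}$ all transform equivariantly. Assembling these, both $o(V)$ and $V'[F]$ are equivariant, so $u$ is $\Gamma$-invariant.

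Combining the two steps yields $\gamma\cdot(\Nabla_V s)=\Nabla_{\gamma_*V}(\gamma\cdot s)$, which is exactly the statement that the Hitchin connection is preserved. The argument is essentially a naturality check, and the only mild subtlety I anticipate is the equivariance of the Ricci potential, which is handled by its uniqueness characterization in $C^\infty_0(M,\bR)$; no analytic input is required.
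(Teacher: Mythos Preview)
Your proposal is correct and is essentially the argument the paper has in mind: the paper states this lemma without proof, prefacing it only with ``In this case we immediately get the following invariance,'' so it is treating the result as an immediate consequence of the $\Gamma$-equivariance of all the geometric data entering $u$. Your write-up simply makes that naturality check explicit, and the one point you flag as potentially subtle---equivariance of the Ricci potential via its uniqueness in $C^\infty_0(M,\bR)$---is exactly the right justification.
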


\begin{remark}
\label{rem2}
We remark that if $M$ is not compact, but we know there exist a family of 
functions $F : \T \to C^\infty(M)$ which solves
\[
 \Ric = n\omega + 2i\partial \dbar F,
\]
then all the rest of the proof of Theorem \ref{HCE} is local and thus it applies 
in the noncompact case as well, and the theorem remains valid in this more 
general case.  
We further observe from the above argument that if the family $I$ satisfies 
$F_\sigma = 0$ for all $\sigma\in \T$, then the above construction also gives 
a Hitchin connection, which in that case is simply given by 
\[
 u(V) = -\frac 1 {4k} \Delta_{G(V)}.
\]
An example of this is if $M$ is a torus and $I$ is a family of linear complex 
structures on $M$, see e.g. \cite{AB}.  
\end{remark}

\section{Non-negative polarizations on moduli spaces}
\label{sect3}

In this section we review the setting and results from \cite{A3} and we discuss the immediate generalizations to surfaces with marked points.

Let $\Sigma$ be a closed oriented surface and let $R$ be a finite set of points on $\Sigma$ and set $\tilde{\Sigma} = \Sigma - R$.
\begin{defn}
  A system $\tilde{P}$ of $q$ disjoint closed curves on $\tilde{\Sigma}$ is called \emph{admissible} if no two curves from the system are homotopic on $\tilde{\Sigma}$ and none of the curves are null-homotopic on $\tilde{\Sigma}$  nor homotopic on $\tilde{\Sigma}$ to a curve which is contained in a disc-neighborhood of one of the points in $R$.
\end{defn}
For an admissible system of curves $\tilde{P}$ on $\tilde{\Sigma}$, let $\bar{\Sigma}$ be the complement in $\tilde{\Sigma}$ of the curves in $\tilde{P}$. Suppose $c_0$ is any assignment of conjugacy classes of $\SU(2)$ to each of the points in $R$. Let $N$ be the moduli space of flat $\SU(2)$-connections on $\tilde{\Sigma}$ with holonomy around each of the points in $R$ contained in the conjugacy classes determined by $c_0$.

We let $h_{\tilde{P}} : N \to [-2,2]^{\tilde{P}}$ be the map which maps a connection to the trace of the holonomy around the curves in $\tilde{P}$. We let $N_c = h^{-1}_{\tilde{P}}(c)$ for all $c \in [-2,2]^{\tilde{P}}$. Consider the moduli space $\bar{N}$ consisting of flat connections on $\bar{\Sigma}$ with holonomy around each of the points in $R$ contained in the conjugacy class determined by $c_0$. Let $\bar{N}^c$ be the subspace of $\bar{N}$ consisting of the connections, which also has holonomy around each of the two boundary components corresponding to any curve $\gamma \in \tilde{P}$ given by $c(\gamma)$. The projection map
\begin{align*}
	\pi : N \to \bar{N}
\end{align*}
induces projection maps
\begin{align*}
	\pi_c : N_c \to \bar{N}^c.
\end{align*}
For each of the conjugacy classes $c(\gamma) \in [-2,2]$, $\gamma \in \tilde P$, we choose an element in the conjugacy class and let $Z_{c(\gamma)}$ be the centralizer of this element in $c(\gamma)$. Hence we see that for $c(\gamma) \in (-2,2)$, we have that $Z_{c(\gamma)} \isom \U(1)$, and for $c(\gamma) = \pm 2$, we have that $Z_{c(\gamma)} \isom \SU(2)$. Furthermore, if we have a flat connection $\bar{A}$ on $\bar{\Sigma}$, representing a point in $\bar{N}^c$, we define $Z_{\bar A}$ to be the automorphism group of $\bar{A}$. Now fix a flat connection $A$ on $\tilde{\Sigma}$ such that $[A] \in N_c$ and $\pi_c([A]) = [\bar{A}]$. If we fix parametrizations of each of the components of a tubular neighborhood of $\tilde{P}$ by $S^1 \times (-1,1)$, which for each $\gamma \in \tilde P$ maps $S^1 \times \{0\}$ to $\gamma$, we can assume that $A$ restricted to each component of this tubular neighborhood is of the form $A = \xi_\gamma \, d\theta$, where $\theta$ is a coordinate on $S^1$ and $\xi_\gamma \in \su(2)$ such that $\exp(\xi_\gamma) \in c(\gamma)$ is the chosen element in the conjugacy class for all $\gamma \in \tilde{P}$. We can now associate to any element in the Lie group
\begin{align*}
	z \in Z_c = \prod_{\gamma \in \tilde{P}} Z_{c(\gamma)}
\end{align*}
a broken gauge transformation $g_z$ with support in the chosen tubular neighborhood of $\tilde{P}$, such that the restriction of $g_z$ to the connected component around $\gamma$ is given by $g = \exp(\psi(t)\eta(\gamma))$, where $z(\gamma) = \exp(\eta(\gamma))$ and $\psi : (-1,1) \to [0,1]$ is identically zero on $(0,1)$ and near $-1$, and it is identically $1$ on $(-\eps,0]$, for some small positive $\eps$.

From this, it is clear that the Lie group $Z_{\bar{A}}$ acts on $Z_c$ and we have the following Lemma from \cite{A3}.
\begin{lem}
  We have a smooth $Z_{\bar{A}}$-invariant surjective map
  \begin{align*}
		\tilde{\Phi}_A : Z_c \to \pi_c^{-1}([\bar{A}]),
  \end{align*}
  given by mapping $g \in Z_c$ to $g^*A$. This map induces an isomorphism
  \begin{align*}
		\Phi_A : Z_c/Z_{\bar A} \to \pi_c^{-1}([\bar{A}]).
  \end{align*}
\end{lem}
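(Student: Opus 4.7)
The plan is to verify in sequence that $\tilde\Phi_A$: (i) takes values in $\pi_c^{-1}([\bar A])$, (ii) is smooth, (iii) is invariant under the action of $Z_{\bar A}$ on $Z_c$ (to be identified), and (iv) descends to a bijection on the quotient. The geometric input that makes everything work is that $A = \xi_\gamma\, d\theta$ on the tubular neighborhood of each $\gamma \in \tilde P$, and that elements of $Z_{c(\gamma)}$, being in the centralizer of $\exp(\xi_\gamma)$, automatically commute with $\xi_\gamma$ itself since both lie in the same maximal torus of $\SU(2)$.

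For well-definedness, note that $g_z$ is a genuine smooth gauge transformation on $\bar\Sigma$ and is locally constant on each region where $\psi$ is. On the subintervals of the tubular neighborhood of $\gamma$ where $g_z \equiv 1$ or $g_z \equiv z(\gamma)$, the centralizer property gives $g_z^*A = \Ad(g_z^{-1})(\xi_\gamma)\,d\theta = \xi_\gamma\,d\theta = A$, so $g_z^*A$ is continuous, and in fact smooth, across $\gamma$; thus $g_z^*A$ is a globally defined flat connection on $\tilde\Sigma$. Since $g_z$ is trivial outside the tubular neighborhoods of $\tilde P$, the holonomies around points of $R$ are unchanged, giving $[g_z^*A] \in N_c$, and since $g_z|_{\bar\Sigma}$ is unbroken, $\pi_c([g_z^*A]) = [\bar A]$. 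Smoothness of $\tilde\Phi_A$ follows from the smooth dependence of $g_z$ on $z$ through $\exp$ and the fixed cut-off $\psi$, composed with the smooth gauge action on connections.

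For the $Z_{\bar A}$-invariance, I would first identify the action. Any $h \in Z_{\bar A}$ preserves $\bar A = \xi_\gamma\,d\theta$ on the two boundary circles of $\bar\Sigma$ parallel to $\gamma$; the classification of gauge symmetries of a flat connection on a cylinder shows that after normalizing by conjugation with $\exp(\xi_\gamma\theta)$, the one-sided boundary values of $h$ are constant elements $h^\pm_\gamma \in Z_{c(\gamma)}$, and these define an action of $Z_{\bar A}$ on $Z_c$ in the obvious way. Invariance is then proved by extending $h$ to a broken gauge transformation $\tilde h$ on $\tilde\Sigma$ whose restrictions to the tubular neighborhoods preserve $A$ (again by the centralizer property); then $\tilde h \circ g_z$ differs from $g_{h \cdot z}$ by an ordinary smooth gauge transformation on $\tilde\Sigma$, hence $[g_{h\cdot z}^*A] = [g_z^*A]$ in $N_c$.

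Finally, for surjectivity, given $[A'] \in \pi_c^{-1}([\bar A])$ pick a representative $A'$ and a gauge transformation $h : \bar\Sigma \to \SU(2)$ with $h \cdot A'|_{\bar\Sigma} = \bar A$. Smoothness of $A'$ across each $\gamma$ together with the ODE $h^{-1}dh = \bar A - \Ad(h^{-1}) A'$ forces $h$ to admit well-defined one-sided boundary limits at $\gamma$, and the discrepancy of these limits (relative to the normalizing conjugation by $\exp(\xi_\gamma\theta)$) determines an element $z(\gamma) \in Z_{c(\gamma)}$; one checks directly that the assembled $z \in Z_c$ satisfies $[g_z^*A] = [A']$. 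For injectivity modulo $Z_{\bar A}$, if $k$ is a smooth gauge transformation on $\tilde\Sigma$ with $k \cdot g_{z_1}^*A = g_{z_2}^*A$, then $g_{z_2}\, k\, g_{z_1}^{-1}$ is a broken gauge transformation preserving $A$ whose restriction to $\bar\Sigma$ lies in $Z_{\bar A}$ and whose boundary data carries $z_1$ to $z_2$ under the identified action. The main obstacle is the careful bookkeeping of the action of $Z_{\bar A}$ on $Z_c$ and the matching of broken gauge transformations modulo smooth ones, which rests entirely on the classification of gauge symmetries of the local model $\xi_\gamma\, d\theta$ on the cylinder and the compatibility of the chosen cut-off $\psi$ with the centralizer structure.
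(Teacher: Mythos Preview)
The paper does not supply its own proof of this lemma; it is quoted from \cite{A3} (see the sentence immediately preceding the statement). Your argument is the natural one and matches the standard approach found there: verify that $g_z^*A$ extends to a flat connection on $\tilde\Sigma$, identify the $Z_{\bar A}$-action on $Z_c$ through the boundary values of automorphisms of $\bar A$, and obtain surjectivity and injectivity on the quotient by comparing broken gauge transformations on $\tilde\Sigma$ with global smooth ones.

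One point deserves more care. Your assertion that every $z\in Z_{c(\gamma)}$ commutes with $\xi_\gamma$ ``since both lie in the same maximal torus'' is correct when $c(\gamma)\in(-2,2)$ (the centralizer of a regular element of $\SU(2)$ is the maximal torus it generates, whose Lie algebra contains $\xi_\gamma$) and trivially when $c(\gamma)=2$ (take $\xi_\gamma=0$), but it fails as stated when $c(\gamma)=-2$: there $\exp(\xi_\gamma)=-\Id$ is central, so $Z_{c(\gamma)}=\SU(2)$, while $\xi_\gamma\neq 0$ is fixed under $\Ad$ only by a single maximal torus. For such $\gamma$ the literal $g_z^*A$ need not glue smoothly across $\gamma$, and well-definedness has to be argued differently, for instance at the level of holonomy representations (where the relevant compatibility relation for the transverse generator $\delta_\gamma$ only requires $z(\gamma)$ to centralize $\exp(\xi_\gamma)$, not $\xi_\gamma$ itself). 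This does not change the architecture of your proof, only the verification of well-definedness at the non-generic holonomy values.
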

We observe that $Z_{\bar A}$ is isomorphic to a product of Lie groups. The product is index the components of $\bar{\Sigma}$ and the Lie groups are of sub-groups of $\SU(2)$ from the following list: $\SU(2)$, $Z_{\SU(2)} = \{ \pm \Id \}$, or a conjugate of $\U(1) \subset \SU(2)$.

We denote by $\calL_N$ the Chern--Simons line bundle over $N$
constructed in \cite{Fr}. $\calL_N$ is a topological complex line bundle over $N$. Moreover, there is a well-defined notion of parallel transport in this bundle along any curve in $N$ which can be lifted to a piecewise $C^1$-curve of connections. Over the dense smooth part $N'$ of $N$, $\calL_N$ is equipped with a preferred Chern--Simons connection, whose parallel transport induces this parallel transport.

Since $\calL_N$ is constructed in \cite{Fr} on the space of connections on $\tilde{\Sigma}$ with holonomy contained in $c_0$, we see in fact that we get a well-defined line bundle $\calL_{N,A}$ over $Z_cA \isom Z_c$, with an induced action of $Z_{\bar{A}}$, with the property that there is a natural $Z_{\bar A}$-equivariant isomorphism from $\calL_{N,A}$ to $\tilde{\Phi}^*_A(\calL_N)$. From this we see that the restriction of smooth sections of $\calL^k_N$ to $\pi_c^{-1}([\bar{A}])$ gets pulled back by $\tilde{\Phi}_A$ to $Z_{\bar{A}}$-invariant smooth sections of the smooth bundle $\calL^k_{N,A}$ over $Z_c$. This gives us a means to use differential geometric techniques to study these restrictions, even though these fibers sometimes are singular.
\begin{defn}
  The Bohr--Sommerfeld set $B_k(\tilde{P})$ associated to $\tilde{P}$ on $\Sigma'$ is by definition the subset of $c$'s in $h_{\tilde{P}}(N) \subset [-2,2]^{\tilde{P}}$, for which the holonomy in $\calL_N^k|_{N_c}$ along the fibers of $\pi_c$ is trivial.
\end{defn}
We remark that if $c \in B_k(\tilde{P})$, there is a unique complex line $\calL_{c,k}$ over $\bar{N}^c$ and a preferred isomorphism
\begin{align*}
		\pi_c^*(\calL_{c,k}) \isom \calL_N^k|_{N_c}.
\end{align*}
Let $\bar{\sigma}$ be a complex structure on $\bar{\Sigma}$ with the following property:
\begin{enumerate}
  \item The complex structure $\bar{\sigma}$ restricted to each of the components of a tubular neighborhood of the curves in $\tilde{P}$ are conformally equivalent to semi-infinite cylinders.
  \item The complex structure $\bar{\sigma}$ extends over the points in $R$.
\end{enumerate}
The following theorem is an immediate generalization of Theorem~5.1 in \cite{A3}.
\begin{theorem}
  The structure $(\tilde{P},\bar{\sigma})$ induces a non-negative polarization $F_{\tilde{P},\bar{\sigma}}$ on $N$, with the following properties:
  \begin{itemize}
    \item The coisotropic leaves of $F_{\tilde{P},\bar{\sigma}}$ are given by the fibers $N_c$, $c \in [-2,2]^{\tilde{P}}$.
    \item The isotropic leaves of $F_{\tilde{P},\bar{\sigma}}$ in $N_c$ are fibers of $\pi_c : N_c \to \bar{N}^c$, for all $c \in [-2,2]^{\tilde{P}}$.
  \end{itemize}
\end{theorem}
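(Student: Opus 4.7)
The plan is to construct $F_{\tilde P,\bar\sigma}$ explicitly as the assembly of a vertical distribution with a horizontal holomorphic lift, then verify the polarization axioms, closely following the strategy of Theorem~5.1 of \cite{A3} while tracking the role of the marked points $R$. The first step is to endow each $\bar N^c$ with a K\"ahler structure coming from $\bar\sigma$. Since $\bar\sigma$ extends across $R$ and is conformally a semi-infinite cylinder at each end produced by cutting along $\tilde P$, a Mehta--Seshadri type correspondence identifies the smooth locus of $\bar N^c$ with a moduli space of semi-stable parabolic bundles on the closed Riemann surface underlying $(\bar\Sigma,\bar\sigma)$, with parabolic weights at the two sides of each $\gamma\in\tilde P$ prescribed by $c(\gamma)$ and at the points of $R$ prescribed by $c_0$. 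This yields a smooth integrable complex structure $J^{\bar\sigma}_c$ on the smooth locus of $\bar N^c$ which is K\"ahler with respect to the induced symplectic form.

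The second step is to define the candidate polarization. Let $V_c=\ker(d\pi_c)\subset TN_c$ be the vertical distribution along $\pi_c$. Over the smooth strata, set
\[
F_{\tilde P,\bar\sigma}\big|_{N_c}=(d\pi_c\otimes\bC)^{-1}\bigl(\pi_c^*T^{1,0}_{J^{\bar\sigma}_c}\bar N^c\bigr),
\]
so that one has a short exact sequence
\[
0\to (V_c)_{\bC}\to F_{\tilde P,\bar\sigma}\big|_{N_c}\to \pi_c^*T^{1,0}_{J^{\bar\sigma}_c}\bar N^c\to 0.
\]
As $c$ varies smoothly, these patch together to a smooth complex distribution on the smooth locus of $N$; by construction $F\cap\bar F=(V_c)_{\bC}$ and $F+\bar F=(TN_c)_{\bC}$, which already delivers the two bulleted properties concerning isotropic and coisotropic leaves.

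The third step is to check that $F_{\tilde P,\bar\sigma}$ is an involutive Lagrangian distribution satisfying the non-negativity condition. Involutivity follows from two facts combined modulo $V_c$: the vertical distribution $V_c$ is involutive because it is tangent to the fibers of $\pi_c$, and $T^{1,0}\bar N^c$ is involutive because $J^{\bar\sigma}_c$ is integrable. The Lagrangian property $\omega|_F=0$ uses that the Goldman form is horizontal with respect to $\pi_c$ in the sense that its restriction to $N_c$ descends to the natural symplectic form on $\bar N^c$, together with the Lagrangian property of $T^{1,0}\bar N^c$ for the corresponding K\"ahler form; the vertical directions contribute zero because they are the null directions of $\omega|_{N_c}$. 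Non-negativity $-i\omega(X,\bar X)\ge 0$ for $X\in F$ then reduces on the quotient to the positivity of the K\"ahler metric on $\bar N^c$ induced by $\bar\sigma$.

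The main obstacle will be the singular loci: both $N_c$ and $\bar N^c$ are singular at conjugacy classes of reducible connections, and for boundary values $c(\gamma)=\pm 2$ the fiber structure of $\pi_c$ changes, as the centralizer $Z_{c(\gamma)}$ jumps from $\U(1)$ to $\SU(2)$. To handle this one performs the differential geometric construction on the smooth strata and extends it across the singular locus by continuity after pulling back along the map $\tilde\Phi_A$ from the Lemma preceding the theorem, where the local model $Z_c/Z_{\bar A}$ is a quotient of a product of compact Lie groups whose geometry is independent of $\bar\sigma$ away from the cylindrical necks. Since the local geometry near each cut $\gamma\in\tilde P$ is unaffected by the presence of $R$ elsewhere on $\Sigma$, the verification from \cite{A3} applies verbatim on each cut; the only genuinely new ingredient is the use of parabolic bundles with prescribed holonomy at $R$, which is standard, and this is the sense in which the theorem is an immediate generalization.
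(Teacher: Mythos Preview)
Your proposal is correct and follows precisely the route the paper intends: the paper gives no proof at all for this statement, merely declaring it ``an immediate generalization of Theorem~5.1 in \cite{A3}'', and your sketch is exactly a reconstruction of that argument with the parabolic/marked-point modifications spelled out. Your identification of the only genuinely new ingredient---replacing the Narasimhan--Seshadri correspondence on $\bar N^c$ by the Mehta--Seshadri correspondence to accommodate the fixed holonomies at $R$---is on the mark, and your handling of the singular fibers via the local model $Z_c/Z_{\bar A}$ matches the paper's setup in the preceding lemma.
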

\begin{defn}
  Let $H^{(k)}_{\tilde{P},\bar{\sigma}}$ denote the vector space of distributional sections of $\calL_N^k$ over $N$, which are covariant constant along the directions of $F_{\tilde{P},\bar{\sigma}}$.
\end{defn}
We have the following factorization theorem, which is an analogue of the factorization theorem in \cite{A2}.
\begin{theorem}
  We have the following natural isomorphism:
  \begin{align*}
		H_{\tilde P, \bar \sigma}^{(k)} \simeq \bigoplus_{c \in B_k(\tilde P)} H^0(\bar{N}^c_{\bar \sigma},\calL_{c,k}).
  \end{align*}
\end{theorem}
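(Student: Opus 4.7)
The plan is to follow the strategy of the factorization theorem in \cite{A2}, adapted to the polarization $F_{\tilde P, \bar\sigma}$, which has mixed type: real directions tangent to the fibers of $\pi_c$, together with complex (K\"ahler) directions transverse to these fibers on each $\bar N^c_{\bar\sigma}$. First I would analyze the support of a distributional section $s$ of $\calL_N^k$ that is covariantly constant along $F_{\tilde P,\bar\sigma}$. Since the isotropic leaves of $F_{\tilde P,\bar\sigma}$ inside $N_c$ are the fibers of $\pi_c: N_c \to \bar N^c$, covariant constancy along these leaves forces the holonomy of $\calL_N^k$ around each such leaf to be trivial on $\supp(s)$. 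By the definition of $B_k(\tilde P)$, this means that $\supp(s) \subset \bigcup_{c \in B_k(\tilde P)} N_c$, and moreover $s$ decomposes as a sum of contributions, one for each $c \in B_k(\tilde P)$.

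Second, I would fix $c \in B_k(\tilde P)$ and identify the contribution supported on $N_c$. Using the preferred isomorphism $\pi_c^*(\calL_{c,k}) \isom \calL_N^k|_{N_c}$, covariance along the $\pi_c$-fibers says that $s|_{N_c}$ is the pullback of a section of $\calL_{c,k}$ over $\bar N^c$. To handle singular fibers, arising when some $c(\gamma) = \pm 2$ so that $Z_{c(\gamma)} \isom \SU(2)$, I would pull back along $\tilde\Phi_A : Z_c \to \pi_c^{-1}([\bar A])$ using the $Z_{\bar A}$-equivariant trivialization $\calL_{N,A}^k \to \tilde\Phi_A^*(\calL_N^k)$ and rephrase covariant constancy as $Z_{\bar A}$-invariance of a smooth section on the smooth model $Z_c$, exactly as in \cite{A3}. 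This is the step I expect to be the main technical obstacle: one must verify that the distributional covariance condition translates cleanly to invariance on the smooth model even at singular $\pi_c$-fibers, and then check that the resulting $Z_{\bar A}$-invariant data assembles into a well-defined (holomorphic) section of $\calL_{c,k}$ on the possibly singular quotient $\bar N^c_{\bar\sigma}$.

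Third, I would incorporate the transverse complex directions from $\bar\sigma$. The polarization $F_{\tilde P,\bar\sigma}$ on $N_c$ contains the $(0,1)$-tangent directions of $\bar N^c_{\bar\sigma}$, pulled back through $\pi_c$; covariant constancy along these imposes $\nabla^{0,1}_{\bar\sigma} \tilde s = 0$ on the descended section $\tilde s$ of $\calL_{c,k}$ on $\bar N^c_{\bar\sigma}$. Hence the descended section lies in $H^0(\bar N^c_{\bar\sigma},\calL_{c,k})$, giving a natural linear map
\begin{align*}
H_{\tilde P,\bar\sigma}^{(k)} \longrightarrow \bigoplus_{c \in B_k(\tilde P)} H^0(\bar N^c_{\bar\sigma},\calL_{c,k}).
\end{align*}

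Finally, to produce the inverse, any tuple of holomorphic sections $\tilde s_c \in H^0(\bar N^c_{\bar\sigma},\calL_{c,k})$ can be pulled back via $\pi_c$ to a smooth section of $\calL_N^k|_{N_c}$ and then extended by zero (in the distributional sense, against a transverse Liouville-type measure on $N$) to a distributional section of $\calL_N^k$ over $N$ supported on $N_c$. Verifying that this distribution is covariantly constant along $F_{\tilde P,\bar\sigma}$ reduces, along the isotropic directions, to the triviality of the holonomy defining $B_k(\tilde P)$, and, along the complex transverse directions, to the holomorphicity of $\tilde s_c$. Summing over $c \in B_k(\tilde P)$ gives the inverse of the map above, establishing the isomorphism. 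Naturality and independence of auxiliary choices (such as the base connection $A$ and the broken gauge transformations) follow from the $Z_{\bar A}$-equivariance of $\tilde\Phi_A$ and the naturality of the Chern--Simons line bundle constructed in \cite{Fr}.
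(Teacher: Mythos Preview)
Your proposal is correct and follows essentially the same approach as the paper: both invoke the factorization argument of \cite{A2}, first constraining the support via the holonomy condition to the Bohr--Sommerfeld levels, then descending along the fibers of $\pi_c$ and reading off holomorphicity from the transverse complex directions of $F_{\tilde P,\bar\sigma}$. Your treatment is more elaborate than the paper's terse sketch (you explicitly construct the inverse and discuss the singular fibers via the $Z_c$-model), while the paper makes one point slightly more explicit than you do: it observes that triviality of the holonomy along \emph{one} generic fiber of $\pi_c$ forces triviality along \emph{all} generic fibers, since the symplectic annihilator of $TN_c$ is $\ker(\pi_c)_*$ at generic points; this is what justifies the jump from ``holonomy trivial on $\supp(s)$'' to ``$\supp(s)\subset \bigcup_{c\in B_k(\tilde P)} N_c$'' in your first step.
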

\begin{proof}
   The theorem follows directly from the arguments presented in \cite{A2}. First one observes that for any $c \in h_{\tilde P}(N)$, the holonomy is trivial along some generic fiber of $\pi_c$ if and only if it is trivial along all the generic fibers of $\pi_c$. This follows since the symplectic annihilator of $TN_c$ is $\ker(\pi_c)_*$ at a generic point of $N_c$. From this one concludes that the support of any distribution in $H^{(k)}_{\tilde P,\bar \sigma}$ must be contained in $h^{-1}_{\tilde P}(B_k(\tilde{P}))$. For each $c$ in $B_k(\tilde{P})$ one then observes that a distribution in $H^{(k)}_{\tilde P, \bar \sigma}$ can be restricted to $N_c$, and here it must be covariant constant along the fibers of $\pi_c$ and hence induces a section in $\calL_{c,k}$ over $\bar{N}^c$. By analyzing the distributional section restricted to $N_c$ in the transverse directions to the fibers of $\pi_c$, one finds that the induced section of $\calL_{c,k}$ over $\bar{N}^c$ must be holomorphic with respect to the complex structure induced on $\bar{N}^c$ by $\bar{\sigma}$.
\end{proof}
Suppose we now have a complex structure $\tilde{\sigma}_0$ on $\tilde{\Sigma}$, which extends over $\Sigma$. We now construct a family of complex structures $\tilde{\sigma}_t$ on $\tilde{\Sigma}$, obtained from $\tilde{\sigma}_0$ by cutting $\tilde{\Sigma}$ along each of the curves in $\tilde{P}$ and gluing in flat cylinders of length $t$ to each of the two copies of each curve in $\tilde{P}$, for all non-negative $t$. The complex structures $\tilde{\sigma}_t$ on $\tilde{\Sigma}$ induce complex structures on $N$. When identifying the surface we obtain by cutting $\Sigma$ along $\tilde{P}$ and then attaching semi-infinite flat cylinders to all boundary components, with $\bar \Sigma$, we obtain a complex structure on $\bar{\Sigma}$, which we denote $\bar{\sigma}$.

The following theorem is an immediate generalization of Theorem~6.2 of \cite{A2}.

\begin{theorem}
  \label{thm10}
  The complex structures on $N$ induced from the complex structures $\tilde{\sigma}_t$ converge to the non-negative polarization $F_{\tilde P, \bar \sigma}$ as $t$ goes to infinity.
\end{theorem}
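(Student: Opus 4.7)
The plan is to follow the proof of Theorem~6.2 in \cite{A2} essentially verbatim, treating the punctures as additional data that does not interfere with the cylinder-lengthening analysis. The key input is the Hodge-theoretic identification at a smooth flat connection $[A]\in N$ of $T_{[A]}N$ with the space of $\ad P$-valued harmonic $1$-forms on $\tilde\Sigma$ for the conformal structure underlying $\tilde\sigma_t$, with the asymptotic behavior near $R$ prescribed by the conjugacy classes $c_0$. The complex structure $J_t$ on $T_{[A]}N$ induced by $\tilde\sigma_t$ is then the Hodge star $\ast_t$ on such harmonic forms, and convergence of polarizations is to be interpreted as pointwise convergence of the associated $(0,1)$-subspaces in $T_{[A]}N\otimes\bC$ to the complexification of $F_{\tilde P,\bar\sigma}$.

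First I would carry out the local analysis on each inserted flat cylinder $C_t = [-t,t]\times S^1$. Fourier-decomposing an $\ad P$-valued harmonic $1$-form in the $S^1$-direction, the non-zero Fourier modes decay exponentially in the cylinder variable at a rate bounded below independently of $t$, while the zero mode is affine in the cylinder variable and takes values in the Cartan subalgebra singled out by the holonomy of $A$ along the corresponding curve. Combined with uniform $L^2$ bounds coming from the complement, this shows that in the limit $t\to\infty$ a harmonic form splits into a harmonic form on $\bar\Sigma$ (with the conjugacy class conditions at $R$) plus, on each cylinder, a zero-mode of the form $\alpha\, dx + \gamma\, d\theta$ with $\alpha,\gamma$ constant in the relevant centralizer.

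Next I would match the two surviving pieces with the two pieces of $F_{\tilde P,\bar\sigma}$. The $\bar\Sigma$-part gives exactly the complex structure on $T\bar N_{\bar\sigma}$, which is the complex structure induced by $\bar\sigma$ on the symplectic reduction $F_{\tilde P,\bar\sigma}/F_{\tilde P,\bar\sigma}^\perp$. For the cylinder zero modes, the $\alpha\, dx$ piece represents an infinitesimal broken gauge transformation, hence is tangent to the fibers of $\pi_c$, while the $\gamma\, d\theta$ piece represents a Hamiltonian vector field of one of the trace-of-holonomy functions defining $h_{\tilde P}$. The key point, as in \cite{A2}, is that when harmonic forms on $C_t$ are translated into tangent vectors at $[A]$ in $N$ the $d\theta$ component is rescaled by a factor proportional to $t^{-1}$, so any fixed complex combination $\alpha\,dx + i\gamma\,d\theta$ limits in $T_{[A]}N$ to a real vector in the isotropic direction $\alpha\, dx$. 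The limiting $(0,1)$ subspace therefore coincides with the complexification of the isotropic foliation by fibers of $\pi_c$, overlaid with the genuine $(0,1)$-space coming from $\bar N_{\bar\sigma}$; this is precisely the non-negative polarization $F_{\tilde P,\bar\sigma}$ as described in the previous theorem.

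The main obstacle, and really the only novelty over \cite{A2}, is to verify that the Hodge-theoretic identification of $T_{[A]}N$ and the cylinder decomposition of harmonic forms remain valid in the presence of the punctures $R$ with prescribed holonomy conjugacy classes $c_0$. Since $\bar\sigma$ and every $\tilde\sigma_t$ are required to extend across $R$, the local analytic setup near the marked points is $t$-independent, and the conjugacy class condition translates into a fixed linear boundary condition on the harmonic representatives at the punctures. This local analysis decouples cleanly from the cylinder analysis along $\tilde P$, and so the argument of \cite{A2} applies with the marked points playing a passive role.
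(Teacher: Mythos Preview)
Your proposal is correct and is exactly the approach the paper takes: the paper does not give an independent proof but simply states that Theorem~\ref{thm10} is an immediate generalization of the closed-surface convergence result (Theorem~6.2 in the cited earlier work), and later in Section~\ref{sect4} refers back to the arguments in \cite{A3} for the $C^\infty$-convergence. Your sketch spells out precisely why the generalization is immediate---the Hodge-star description of the complex structure, the exponential decay of nonzero Fourier modes on the inserted cylinders, the $t^{-1}$ rescaling that collapses the complex cylinder directions onto the real isotropic directions, and the observation that the fixed boundary conditions at the marked points are $t$-independent and decouple from the cylinder analysis---so you are supplying the details the paper leaves implicit. One small remark: the paper's citation to \cite{A2} just before the theorem appears to be a slip for \cite{A3} (the polarization paper), as confirmed by the later reference in Section~\ref{sect4}; your argument is in any case the one from \cite{A3}.
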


\section{The asymptotics of the Hitchin connection under degenerations}
\label{sect4}

In this section we prove Theorem~\ref{Miso}. We consider the more
general setting discussed in Theorem~\ref{thm10} from the previous
section. However, we only need the following special cases:
\begin{enumerate}
  \item The surface $\Sigma$ is of genus $g > 1$ and $R$ consists of one point,
  \item The surface $\Sigma$ is a torus and $R$ consists of one point,
  \item The surface $\Sigma$ is a sphere and $R$ consists of four points.
\end{enumerate}
We recall that the moduli space $N$ of interest is the moduli space of flat connections on $\tilde \Sigma$ with holonomy around each of the points in $R$ determined by $c_0$.

In the case (1) we will only be interested in the moduli space $N=M$
of flat connections on $\tilde \Sigma$ with holonomy $c_0=\{-\Id\}$
around the one point $p$ in $R$. Consider a point $\sigma \in \T$. 

A holomorphic vector bundle $E \to \tilde\Sigma_{\tilde\s}$ is
semi-stable if for every proper holomorphic subbundle $F \subset E$ we
have the following conditions on the \emph{slope} $\mu$ of $E$, and $F$
\[
\frac{\deg(F)}{\rk(F)} = \mu(F) \leq \mu(E) = \frac{\deg(E)}{\rk(E)}. 
\] A holomorphic vector bundle is called stable if the inequality is
strict.

To each semi-stable vector bundle there exists a unique (up to
isomorphism) filtration called the Jordan-H\"{o}lder filtration
\[
 0 = E_0 \subset \dots \subset E_m = E,
\]
with the property that the slopes of each of the quotients is the same
as the slope of $E$, i.e.
\[
\mu(E_{i+1}/E_i) = \mu(E),
\]
and each quotient $E_{i+1}/E_i$ is a stable vector bundle. We then
define the associated grated vector bundle
\[
\Gr(E) = \bigoplus_i (E_{i+1}/E_i).
\]
Two holomorphic vector bundles $E$, $E'$ are S-equivalent if and only
if their associated grated vector bundles are isomorphic, i.e.
\[
E \sim_S E' \quad \text{if and only if} \quad \Gr(E) \simeq \Gr(E').
\]

\begin{theorem}[Narasimhan \& Seshadri]
  The moduli space of S-equivalence classes of semi-stable bundles of
  rank $n$ and determinant $\mathcal{O}_\s([p])$ is a smooth complex
  algebraic projective variety isomorphic as a K\"{a}hler manifold to $M_\s$
\end{theorem}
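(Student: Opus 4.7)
The plan is to prove this Narasimhan--Seshadri theorem in three stages: an algebro-geometric construction of the right-hand side as a projective variety, an analytic identification with the gauge-theoretic $M_\s$, and a matching of K\"{a}hler structures via infinite dimensional symplectic reduction.

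First I would realize the moduli of S-equivalence classes of semistable bundles on $\tilde\Sigma_{\tilde\s}$ of rank $n$ with determinant $\mathcal{O}_\s([p])$ as a GIT quotient. Fix a sufficiently ample line bundle and form the Quot scheme of quotients of $\mathcal{O}^{\oplus N}$ with the prescribed Hilbert polynomial and determinant; restrict to the semistable locus and take the Mumford--Seshadri GIT quotient by $\SL(N)$. This yields a projective variety whose closed points parametrize S-equivalence classes, and smoothness of the stable locus follows from $\Ext^2$ vanishing on a curve combined with Luna's slice theorem.

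The heart of the argument is the identification with $M_\s$. I would fix a $C^\infty$ hermitian bundle $E$ on $\tilde\Sigma$ of rank $n$ with the correct first Chern class. Then $M_\s$ is the $\U(n)$-gauge quotient of the affine space of flat unitary connections with holonomy $-\Id$ around $p$, while the algebraic moduli is analytically the complexified-gauge quotient of the space $\cA^{0,1}$ of semistable $\dbar$-operators on $E$. The key assertion is that every stable $\G^{\bC}$-orbit in $\cA^{0,1}$ meets the zero set of the Atiyah--Bott moment map (projectively flat Chern connections) in a single $\U(n)$-orbit. I would prove this by running Donaldson's heat flow on the space of hermitian metrics on $E$: stability provides the properness needed for long-time existence and convergence to a projectively flat Chern connection, while at a semistable but non-stable point the flow converges to a split Yang--Mills connection corresponding precisely to the Jordan--H\"{o}lder graded bundle, which is how S-equivalence arises analytically.

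Finally, both sides carry a natural K\"{a}hler form: on $M_\s$ the Goldman form $\omega$ obtained by symplectic reduction of the Atiyah--Bott form on $\cA$, and on the algebraic side the curvature of the determinant line bundle. The Narasimhan--Seshadri map is holomorphic because it comes from a $(0,1)$-operator construction depending holomorphically on $\sigma$, and it is symplectic by the moment map interpretation of projective flatness, so the two forms agree on the stable locus and the identification extends continuously to a homeomorphism of the compactifications. The principal obstacle is the Donaldson flow convergence step: uniform $L^2$ curvature bounds and Uhlenbeck compactness are required to extract a gauge-convergent subsequence, and careful tracking of the bubbling phenomena at non-stable orbits is exactly what forces the passage from isomorphism classes to S-equivalence classes.
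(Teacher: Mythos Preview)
The paper does not actually prove this theorem: immediately after the statement it simply says ``This theorem is proven by using Mumford's Geometric Invariant Theory'' and moves on, treating it as an imported result from \cite{NS1,NS2}. There is nothing to compare your argument against.

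That said, your outline is a coherent sketch of the modern proof, combining the Seshadri GIT construction of the moduli space with Donaldson's analytic reproof of the correspondence via the hermitian Yang--Mills flow, and the Atiyah--Bott symplectic reduction picture to match the K\"{a}hler forms. Two small comments. First, for general rank $n$ and determinant $\mathcal{O}_\s([p])$ the central holonomy around $p$ is $\exp(2\pi i/n)\cdot\Id$, not $-\Id$; the latter is the $n=2$ case the paper actually cares about. Second, in your last paragraph you invoke Uhlenbeck compactness and ``bubbling phenomena'', but on a Riemann surface there is no bubbling in the Donaldson flow: the base is two-dimensional, the flow exists for all time with uniformly bounded curvature, and the only subtlety at strictly semistable points is that the limiting connection lives on the associated graded $\Gr(E)$ rather than on $E$ itself. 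Framing this as a compactness-and-bubbling issue overstates the analytic difficulty and slightly misidentifies the mechanism by which S-equivalence appears.
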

This theorem is proven by using Mumford's Geometric
Invariant Theory. 

Hence we see that $\T$ parametrizes complex structures which are all K\"{a}hler with respect to the symplectic structure $\omega$ on $M$. To get uniform notation we will in this case (1) also use the notation $\tilde \T$ for $\T$.

In the cases (2) and (3) we are interested in arbitrary rational holonomies around the points in $R$, hence we need on the algebraic side to consider moduli space of parabolic vector
bundles on $\Sigma$ with the parabolic structures located at the points $R$ with respect to some point $\tilde \sigma$ in the Teichm\"{u}ller space $\tilde \T$ of $\tilde \Sigma$.

\begin{defn}
  Let $\tilde\Sigma_{\tilde\s}$ be a compact Riemann surface with distinct marked
  points $R \subset \tilde\Sigma_{\tilde\s}$, and $E \to \tilde\Sigma_{\tilde\s}$ a holomorphic vector
  bundle of rank $r$. A \emph{parabolic structure} on
  $E \to \tilde\Sigma_{\tilde\s}$ at $p \in S$ is a choice of partial flag
  \[
  E_{p} = E^1_{p} \supset E^2_{p} \supset \dots \supset
  E_{p}^{r(p)} \supset 0
  \]
  with a set of parabolic weights
  \[
  w_1(p) < \dots < w_{r(p)}(p), \quad \text{with} \quad w_{r(p)}(p)
  - w_1(p) < 1.
  \]
  Multiplicities are denoted by $m_j(p) = \dim E_{p}^j - \dim
  E_{p}^{j+1}$.

  A \emph{parabolic vector bundle} on $\tilde\Sigma_{\tilde\s}$ is a holomorphic rank
  $r$ vector bundle $E \to \tilde\Sigma_{\tilde\s}$ with a choice of parabolic
  structure at each marked point.
\end{defn}

In order for the moduli space of parabolic vector bundles to have nice
geometric structure we need to impose stability conditions on the
parabolic vector bundles -- just as in the case of ordinary vector
bundles.

  The \emph{parabolic degree} of a parabolic vector bundle $E \to
  \tilde\Sigma_{\tilde\s}$ is defined by
  \[
  \pdeg(E) = \deg(E) + \sum_{p \in R} \sum_{i} m_i(p)w_i(p).
  \]
The parabolic slope of $E$ is
\[
\mu(E) = \pdeg(E)/\rk(E).
\]
Every holomorphic subbundle $F$ of $E$ naturally has a parabolic
structure at each of the marked points $p \in R$ by defining
\[
F_p\cap E^1_p \supset F_p \cap E^2_p \supset \dots \supset F_p \cap
E_p^{r(p)} \supset 0,
\]
and removing repeated terms. The weights are the largest of the
corresponding parabolic weights from $E$, i.e $w_i^F(p) =
\max_{j}\{w_j \, | \, F_p \cap E_{p}^j = F_p^j\}$.

As with vector bundles we now define \emph{stable} parabolic vector
bundles as those where for each proper subbundle $F \subset E$ we have
\[
\mu(F) = \frac{\pdeg(F)}{\rk(F)} < \frac{\pdeg(E)}{\rk(E)} = \mu(E).
\]

The weights give the connection between the moduli space of parabolic
vector bundles to the moduli space of flat
unitary connections with holonomy around the punctured marked points
being these weights. This is the Mehta--Seshadri theorem \cite{MeSe}.

\begin{theorem}[Mehta--Seshadri]
  Let $\tilde\Sigma_{\tilde\s}$ be a surface as above and $R \subset \tilde\Sigma_{\tilde\s}$ a set of marked points of $\tilde\Sigma_{\tilde\s}$. Then there
  is a one-to-one correspondence between the moduli space of
  irreducible unitary connections on $\tilde\Sigma_{\tilde\s} - R$ with
  holonomy around $p \in R$ having eigenvalues
  \[ \{ e^{2\pi i w_1(p)}, e^{2 \pi i w_2(p)},
    \dots, e^{2\pi i w_{r(p)}(p)}\},
    \]
  each $e^{2\pi i w_i(p)}$ with multiplicity $m_i(p)$, and the moduli space
  of parabolic vector bundles with parabolic degree zero on $\tilde\Sigma_{\tilde\s}$
  with weights and multiplicities specified by the above data.
\end{theorem}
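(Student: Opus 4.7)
The plan is to establish the correspondence by constructing maps in both directions and showing they are mutual inverses. The proof splits naturally into two parts of markedly different difficulty, and I would treat them separately.

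The easier direction goes from irreducible unitary connections to stable parabolic bundles of parabolic degree zero. Given such a flat connection $A$ on $\tilde{\Sigma}_{\tilde\sigma} - R$, the associated $\bar\partial$-operator equips the underlying complex vector bundle with a holomorphic structure on the punctured surface. Near each $p \in R$, I would use the prescribed eigenvalues of the local holonomy to construct a canonical holomorphic extension across $p$: diagonalize the holonomy, write $A$ in a normal form on each eigenline, and declare the parabolic flag $E^j_p$ to be spanned by eigenvectors with weights $\geq w_j(p)$, with the weights themselves recording the rates at which the flat sections grow. A direct Chern--Weil computation then shows that $\pdeg(E) = 0$. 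Stability follows from irreducibility: any destabilizing parabolic subbundle $F \subset E$ would, away from $R$, give a holomorphic subbundle preserved by the flat connection, hence correspond to a proper invariant subspace for the holonomy representation.

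The hard direction is the reverse: producing an irreducible unitary flat connection from a stable parabolic bundle of parabolic degree zero. Here I would follow a Donaldson--Uhlenbeck--Yau style argument adapted to the parabolic setting. One seeks a Hermitian metric $h$ on $E|_{\tilde{\Sigma}_{\tilde\sigma} - R}$ with prescribed singular asymptotics at each $p \in R$ dictated by the weights $w_i(p)$ and multiplicities $m_i(p)$, such that the Chern connection of $h$ is flat. The existence of such a metric is obtained either by direct minimization of a parabolic Yang--Mills functional, or by running a suitably weighted nonlinear heat flow; in either approach, parabolic stability is precisely the condition that prevents the flow from escaping to infinity or reducing to a splitting. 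Once $h$ is produced, the prescribed asymptotic model forces the monodromy around each puncture to have exactly the required eigenvalues, and irreducibility of the resulting representation follows from stability by the same kind of argument used in the other direction.

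The main obstacle is the analytical construction in the reverse direction, and within it the control of the Hermitian--Einstein metric near each marked point. One must identify the correct local model on a punctured disc — essentially a diagonal metric whose eigenvalue growth rates are the parabolic weights — and show that solutions of the global equation retain this asymptotic form. I would set this up in weighted Sobolev spaces adapted to the model geometry, and verify that the filtration defined by the asymptotic data of the solution matches the parabolic filtration one started with. An alternative route, closer to Mehta and Seshadri's original argument, would be to perform a parabolic version of Mumford's geometric invariant theory directly on the space of parabolic bundles and then identify the quotient with a space of unitary representations via a Narasimhan--Seshadri style construction; the analytic burden reappears there in the proof that GIT-stability coincides with the existence of the required metric. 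Either way, the entire depth of the theorem is concentrated in this equivalence between parabolic stability and the existence of a unitary flat structure with prescribed punctured holonomy.
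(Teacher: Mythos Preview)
The paper does not prove this theorem at all: it is stated with attribution to Mehta and Seshadri and cited as \cite{MeSe}, with no argument given. So there is nothing in the paper to compare your proposal against.

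That said, your outline is a reasonable sketch of how the theorem is actually proved in the literature. The two routes you mention are exactly the standard ones: the original argument of Mehta and Seshadri proceeds via GIT in the spirit of Narasimhan--Seshadri, while the analytic approach you emphasize (construct a Hermitian--Einstein metric with prescribed singular asymptotics, control it in weighted Sobolev spaces near the punctures) is the Donaldson/Biquard/Simpson reformulation. Your identification of the hard direction and of the main obstacle --- matching the asymptotics of the metric to the parabolic filtration --- is accurate. If this were being assessed as a proof plan for the theorem itself rather than as a comparison with the paper, the only substantive gap is that the sketch remains at the level of strategy: the actual weighted analysis (or, alternatively, the GIT construction of the moduli space and the identification of stability with the existence of the unitary structure) is where all the work lies, and you have not indicated how any of those steps would be carried out.
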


We will in the following only be interested in the case of
$\SU(2)$-connections corresponding to rank-$2$ degree $0$ parabolic vector
bundles. Furthermore we will only be interested in the cases where the
Riemann surface is a torus with a single marked point and the sphere
with four marked points.

At the marked points for a rank-$2$ parabolic vector bundle there is
only a two step filtration,
\[
E = E^1_p  \supset E^2_p \supset 0,
\]
for $p \in R$. The weights must satisfy $w_2(p) - w_1(p)<1$ and
$w_1(p) < w_2(p)$. If the parabolic vector bundle should correspond to
a flat unitary connection the parabolic degree of $E$ must be zero, so
\[
0 = \pdeg(E) = \deg(E) + \sum_{p \in S} w_1(p) + w_2(p).
\] 
At each marked point $p \in R$ the holonomy of the connection around
that point is conjugate to $\diag(e^{2 \pi i w_1(p)}, e^{2 \pi i
  w_2(p)})$. Since this matrix must be an $\SU(2)$ matrix
$w_1(p)+w_2(p)$ must be an integer. Since $\deg(E) = 0$ we all in all
have $w_i(p) \in(-\frac{1}{2},\frac{1}{2})$. The consequence is that $w_1(p) + w_2(p) = 0$ and finally that $w_1(p)
= -w_2(p)$. Since $w_1(p) < w_2(p)$ we get that $w_2(p) = s_p \in
[0,\frac{1}{2})$ and $w_1(p) = -s_p \in (-\frac{1}{2},0]$. 

Let $L$ be a proper line subbundle of $E \to \tilde\Sigma_{\tilde\s}$. If we assume $E$
to be parabolically stable then $\pdeg(L) < 0$. For a marked point $p$ the
filtration of $L_p$ has only one step, and is
\[
L_p = L_p \cap E_p^1 \supset L_p \cap E^2_p = \begin{cases} 0 & L \neq
  E^2_p \\ L_p & L_p = E^2_p \end{cases}
\]
In the case $L_p \neq E^2_p$ the weight is $w_1(p) = -s_p$ while if
$L_p = E^2_p$ the weight jumps to $w_1(p) = s_p$. 

In all of the three cases (1) - (3) above, we get a family of complex structures $I$ on $N$ parametrized by $\tilde{\calT}$. We denote $N$ with the complex structure $I(\tilde \sigma)$ by $N_{\tilde \sigma}$ for $\tilde \sigma \in \tilde \T$. We let $H^{(k)}$ denote the vector bundle over $\tilde \calT$, whose fiber over $\tilde \sigma \in \tilde \calT$ is $H^0(N_{\tilde \sigma},\calL^k_N)$.

\begin{lem}
  In the cases (1)---(3) above, we have that $N$ and $I$ satisfy either the assumptions of Theorem~\ref{HCE} or those of Remark~\ref{rem2}, hence in all cases we have a Hitchin connection which is projectively flat.
\end{lem}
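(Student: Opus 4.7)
My plan is to handle the three cases in parallel, reducing each to a verification of the three hypotheses that feed into Theorem \ref{HCE}: rigidity of the family $I$, the Chern-class condition $c_1(N,\omega) = n[\omega/2\pi]$, and the vanishing $H^1(N,\bR)=0$. In case (1) I expect to apply Theorem \ref{HCE} directly; in cases (2) and (3) I would apply either Theorem \ref{HCE} (after a rescaling of $\omega$ and a choice of generic weights that makes $N$ smooth) or fall back on Remark \ref{rem2}.

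For case (1), the holonomy $c_0 = \{-\Id\}$ around the puncture corresponds, via Narasimhan--Seshadri, to rank-$2$ bundles of odd determinant, so $N = M$ is smooth, compact, and simply connected; this gives $H^1(M,\bR)=0$. The prequantum line bundle $\calL_M$ is the determinant of cohomology (the Quillen line bundle), and a standard computation gives $c_1(M) = 2[\omega/2\pi]$, so the Chern-class hypothesis holds with $n=2$. Rigidity is Hitchin's original observation: under the Kodaira--Spencer identification $T_{\tilde\s}\tilde\T \cong H^1(\Sigma_{\tilde\s},T\Sigma_{\tilde\s})$, the derivative $V[I]$ at $\s$ is computed from a harmonic Beltrami differential, and the induced $G(V)_\s \in C^\infty(N,S^2(T_\s N))$ is the image under the dual of a symmetric Higgs-type map; one checks that $G(V)_\s$ is a global holomorphic section of $S^2(T_\s N)$, giving $\bar\partial_\s G(V)_\s = 0$.

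For cases (2) and (3), I would invoke the Mehta--Seshadri theorem to identify $N$ with a moduli space of parabolically stable rank-$2$ parabolic bundles of parabolic degree $0$. Choosing the holonomies in $c_0$ so that the weights $s_p$ are generic (which covers the open-dense locus of conjugacy classes needed in later sections), there are no strictly semistable parabolic bundles, so $N$ is a smooth projective variety with an integral Kähler class proportional to $\omega$. For the once-punctured torus with generic weight, $N$ is a smooth rational surface, and for the four-punctured sphere with generic weights, $N$ is also a smooth rational surface (a blowup of $\bP^1 \times \bP^1$ or a del Pezzo, depending on the weights); in both cases $H^1(N,\bR)=0$ and an appropriate tensor power of the parabolic determinant line bundle satisfies the Chern-class hypothesis. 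Rigidity is again Hitchin's calculation, carried over to the parabolic setting: parabolic Kodaira--Spencer classes produce $G(V)_\s$ as a global holomorphic section of $S^2(T_\s N)$.

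The main obstacle will be the rigidity verification in cases (2) and (3): one must make sure that the harmonic representatives relative to the flat cylinder metric near the punctures pair with the parabolic deformation theory to yield a genuinely $\bar\partial_\s$-closed symmetric bivector field on $N$, rather than merely a smooth one. If this parabolic extension of Hitchin's argument turns out to require the extended weights to be compatible with the marked points, I would instead apply Remark \ref{rem2}: one solves the Ricci-potential equation $\Ric = n\omega + 2i\partial\bar\partial F_\s$ on the smooth locus of $N$ (which is automatic on a compact Kähler manifold with the correct Chern class), and since the proof of Theorem \ref{HCE} is local once $F$ exists, projective flatness of the resulting connection follows exactly as in the compact case. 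In either route, once rigidity and the cohomological conditions are in place, the conclusion of the lemma is immediate from Theorem \ref{HCE}.
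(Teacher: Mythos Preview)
Your treatment of case (1) is essentially what the paper does: both you and the paper simply invoke Hitchin's original verification that the family of complex structures on the odd-degree moduli space is rigid and satisfies the cohomological hypotheses (although note that the correct Chern-class constant for $\SU(2)$ is $n=4$, not $n=2$).

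There is, however, a genuine error in your handling of cases (2) and (3). You assert that for the once-punctured torus and the four-punctured sphere the moduli space $N$ is a smooth rational \emph{surface} (a blowup of $\bP^1\times\bP^1$ or a del Pezzo). This is a dimension miscount: the real dimension of the moduli space of flat $\SU(2)$-connections on a genus-$g$ surface with $n$ generic punctures is $6g-6+2n$, which in both cases (2) and (3) equals $2$. So $N$ has complex dimension one, and in fact (as the paper shows explicitly in Sections~\ref{4p} and~\ref{elliptic} via the trace identities \eqref{eq:trace-identity} and \eqref{eq:trace-identity-2}) $N$ is a $2$-sphere, i.e.\ $N_{\tilde\sigma}\cong\bP^1$. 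Your rational-surface discussion and the associated rigidity argument therefore do not apply as written.

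The paper's own proof takes a rather different route from yours. Rather than attempting to verify the abstract hypotheses of Theorem~\ref{HCE} in cases (2) and (3), it defers to the concrete analysis carried out later: in Section~\ref{4p} the moduli space is identified with $\bP^1$, the symbol of the Hitchin operator is shown to land in $H^0(\bP^1,S^2(T\bP^1))\cong H^0(\bP^1,\mathcal O(4))$ (this is where rigidity enters), and the Hitchin connection is matched explicitly with the geometric KZ connection; Section~\ref{elliptic} handles the once-punctured torus analogously. Your strategy of checking the hypotheses of Theorem~\ref{HCE} directly could in principle be made to work once you correct the dimension (for $\bP^1$ the conditions $H^1=0$ and $c_1 = 2[\omega/2\pi]$ are immediate), but you would still owe the reader the verification that $G(V)_\sigma$ is holomorphic, and that is exactly the content the paper extracts from the explicit $\bP^1$ model.
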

\proof
In the case (1) this was demonstrated by Htichin in \cite{H}. The cases (2) and (3) follow from the special considerations in Sections \ref{4p} and \ref{elliptic}.
\eproof
Consider the family $\tilde{\sigma}_t$ constructed in the previous section from the starting data $(\tilde \sigma_0, \tilde P)$. Let
\begin{align*}
	P_t(\tilde \sigma_0, \tilde P) : H_{\tilde \sigma_0}^{(k)} \to H_{\tilde \sigma_t}^{(k)}
\end{align*}
be the parallel transport with respect to the Hitchin connection in $H^{(k)}$ over $\tilde \calT$ along the curve $(\tilde \sigma_s)$, $s \in [0,t]$.

Let $c \in [-2,2]^{\tilde P}$ and consider the subspace $N_c \subset N$. Consider a point $x$ in $N'$ ($N'$ being the manifold of smooth points of N), which is also a smooth point of $N_c$. For each $t$, let $I_t$ be the corresponding complex structure on $N$. A covariant constant section $s_t \in H_{\tilde \sigma_t}^{(k)}$, $t \in [0,\infty)$, of the Hitchin connection along the curve $\tilde \sigma_t$ satisfies the following equations:
\begin{align*}
	s_t' = u(\tilde \sigma_t')(s_t),
\end{align*}
and
\begin{align*}
	\nabla_X s_t = -i\nabla_{I_t X}s_t
\end{align*}
for all vector fields $X$ and all $t$. Since the curves in $\tilde P$ are non-intersecting, the corresponding holonomy functions Poisson commute, hence we have that $TN_c$ is coisotropic, thus $TN_c^0 \subset TN_c$, where $(\cdot )^0$ refers to the symplectic complement. We observe that $TN_c^{\perp_t} = I_t(TN_c^0)$, where $(\cdot)^{\perp_t}$ refers to the orthogonal complement with respect to the metric induced by $\omega$ and $I_t$. From this we get the following decomposition:
\begin{align}
  \label{eq9}
	TN|_{N_c} = TN_c \oplus I_t(TN_c^0).
\end{align}
For any section $X$ of $TN|_{N_c}$, we define $X'$ a section of $TN_c^0$ and $X''$ a section of $I_t(TN_c^0)$ such that $X = X' + X''$.
\begin{theorem}
  \label{prop1}
  Suppose $s_0 \in H_{\sigma_0}^{(k)}$. Then $s_t|_{N_c}$ only depends on $s_0|_{N_c}$ and we have that
  \begin{align}
    \label{eq10}
		(s_t|_{N_c})' = \tilde{u}_c(\tilde \sigma_t')(s_t|_{N_c}),
  \end{align}
  where $\tilde u_c(\tilde \sigma_t')$ is a second order differential operator acting on $C^\infty(N_c, \L^k_N|_{N_c})$ depending linearly on $\tilde \sigma_t'$. Moreover, the limit
  \begin{align}
		\label{eq11}
		\tilde u_{c,\infty} = \lim_{t \to \infty} \tilde u_c(\tilde \sigma_t')
  \end{align}
  exists, and the operator $\tilde u_{c,\infty}$ is a second order differential operator acting on sections of $\calL^k_N|_{N_c}$, whose kernel consists of sections of $\calL^k_N|_{N_c}$ that are covariant constant along the directions of $F_{\tilde P, \bar \sigma} \cap \bar F_{\tilde P, \bar \sigma}$.
\end{theorem}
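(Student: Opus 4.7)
The plan is to start from the parallel transport equation $s_t' = u(\tilde\sigma_t')(s_t)$ coming from the Hitchin connection, where $u(V)$ is the second-order operator assembled in \eqref{equ}--\eqref{eqo}, and to analyze its restriction to $N_c$. The first task is to show that $u(\tilde\sigma_t')(s_t)|_{N_c}$ depends only on $s_t|_{N_c}$. The two tools are the decomposition \eqref{eq9} and the holomorphicity identity $\nabla_X s_t = -i\nabla_{I_t X} s_t$. Since $TN_c^0 \subset TN_c$, every normal vector $X \in I_t(TN_c^0)$ has the form $X = I_t(Y)$ for some $Y \in TN_c^0$, and the holomorphicity identity converts each normal covariant derivative of $s_t$ into $i$ times a tangential covariant derivative. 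Applying this reduction to the zeroth-order summand $V'[F]$, to the first-order summand $\nabla_{G(V)dF}$, and, after decomposing $G(V)\in S^2(T_{I_t})$ along the complexified splitting induced by \eqref{eq9}, to the outer derivative in $\Delta_{G(V)} = \nabla^2_{G(V)} + \nabla_{\delta_{\tilde\sigma_t}(G(V))}$, one rewrites $u(\tilde\sigma_t')(s_t)|_{N_c}$ as a differential operator on $C^\infty(N_c,\calL^k_N|_{N_c})$ evaluated at $s_t|_{N_c}$; the remaining inner normal derivative in $\nabla^2_{G(V)}$ is absorbed via the commutator identity using the curvature $F_\nabla = -i\omega$, which contributes only lower-order terms. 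This produces the operator $\tilde u_c(\tilde\sigma_t')$ and the closed ODE \eqref{eq10}, which immediately forces $s_t|_{N_c}$ to depend only on $s_0|_{N_c}$.

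The second step is to take $t \to \infty$. By Theorem~\ref{thm10}, the complex structures $I_t$ converge to the non-negative polarization $F_{\tilde P,\bar\sigma}$, whose coisotropic distribution is $TN_c^\bC$ and whose isotropic distribution $F_{\tilde P,\bar\sigma}\cap\bar F_{\tilde P,\bar\sigma}$ equals $(\ker(\pi_c)_*)\otimes\bC$. Feeding this convergence into the identity $\tilde G(V) = -V[g^{-1}]$ produces a limit $G_\infty = \lim_{t\to\infty}G(\tilde\sigma_t')|_{N_c}$ of the bivector, and a direct computation on the inserted cylinders shows that $G_\infty$ takes values in $S^2((\ker(\pi_c)_*)\otimes\bC)$: it is exactly the component of $G(\tilde\sigma_t')$ along the collapsing directions. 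Together with the analogous limits of the first- and zeroth-order parts of $\tilde u_c(\tilde\sigma_t')$, this yields the operator $\tilde u_{c,\infty}$ of \eqref{eq11}, with principal symbol $G_\infty$.

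Finally, one identifies the kernel. If $\psi \in C^\infty(N_c,\calL^k_N|_{N_c})$ is covariant constant along $F_{\tilde P,\bar\sigma}\cap\bar F_{\tilde P,\bar\sigma} = (\ker(\pi_c)_*)\otimes\bC$, then its covariant derivative vanishes in every direction into which the symbol and lower-order coefficients of $\tilde u_{c,\infty}$ contract, so $\tilde u_{c,\infty}\psi = 0$. Conversely, the non-degeneracy of $G_\infty$ on $(\ker(\pi_c)_*)\otimes\bC$ forces every element of $\ker\tilde u_{c,\infty}$ to have vanishing fiber-directional first derivatives, giving the required characterization. The main obstacle is the asymptotic analysis on the stretched cylinders needed in the previous paragraph: one must track the Narasimhan--Seshadri and Mehta--Seshadri identifications precisely enough to show exponential decay of the components of $G(\tilde\sigma_t')$ lying outside $S^2((\ker(\pi_c)_*)\otimes\bC)$ together with strong (not merely weak) convergence of the remaining components. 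This refines the polarization-level convergence of Theorem~\ref{thm10} (\cite{A2}, Theorem~6.2) to the second-order-operator level needed here.
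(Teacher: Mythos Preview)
Your overall architecture matches the paper's: use the splitting \eqref{eq9} and the holomorphicity relation to convert normal derivatives into tangential ones, thereby producing $\tilde u_c(\tilde\sigma_t')$; then pass to the limit using the convergence of polarizations; then identify the kernel. The paper carries this out by choosing an explicit local symplectic frame $(p,q)$ adapted to $F_{\tilde P,\bar\sigma}\cap\bar F_{\tilde P,\bar\sigma}$, writing the holomorphic tangent space as the graph of a symmetric matrix $Z_t = X_t + iY_t$, and computing $G_t$, $\Delta_{G_t}$, and the lower-order terms in closed form in these coordinates. This is more than a stylistic difference: the explicit $Z_t$-calculus yields the asymptotics $Z_t = Z_\infty t^{-1} + R(t)$ with $tR(t)\to 0$ (so $1/t$, not exponential, decay), and this precise rate is what is fed into the Dyson expansion in the next theorem. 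Your sketch flags this as ``the main obstacle'' but does not supply it.

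There is, however, a genuine gap in your treatment of the lower-order terms and the kernel. You write that the first- and zeroth-order parts of $\tilde u_c(\tilde\sigma_t')$ have ``analogous limits'', and then argue the kernel both ways using only that the coefficients lie in fiber directions together with ``non-degeneracy of $G_\infty$''. Neither step goes through as stated. The first- and zeroth-order terms come from the Ricci potential $F_{\tilde\sigma_t}$ via $\nabla_{G(V)dF}$ and $V'[F]$; what the paper actually shows (its Claim~2) is that $F_t\to 0$, that $\dot F_t\to 0$, and that the derivatives of $F_t$ along the real directions of the limiting polarization go to zero, so that these terms \emph{vanish} in the limit and one has exactly $\tilde u_{c,\infty} = \Delta_{G_\infty}$. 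Without this, a surviving zeroth-order multiplication term would not annihilate fiber-covariant-constant sections, breaking your forward inclusion. For the converse, ``non-degeneracy of the symbol'' is not enough to force kernel elements to have vanishing fiber derivatives: an elliptic second-order operator can have nontrivial kernel. The paper's argument is that $G_\infty$ is in fact a (positive) Hermitian structure on the isotropic leaves, so $\Delta_{G_\infty}$ is the Laplace--Beltrami operator for that metric coupled to $\nabla$; on the compact fibers one then gets $\langle \Delta_{G_\infty}\psi,\psi\rangle = -\|\nabla\psi\|^2$ by integration by parts, and this is what forces $\nabla\psi = 0$ fiberwise. You should insert the Ricci-potential vanishing argument and replace the ``non-degeneracy'' step by this Laplace--Beltrami identification.
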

We will use the following notation
$$\tilde{u}_{c,t} = \tilde{u}_c(\tilde \sigma_t').$$
\begin{proof}
  For $X$ a smooth section of $TN|_{N_c'}$, we have that
  \begin{align*}
		\nabla_X = \nabla_{X'} - i\nabla_{I(X'')},
  \end{align*}
  and if $Y$ is a further smooth section of $TN|_{N_c'}$, then
  \begin{align*}
		\nabla_X \nabla_Y =&\, \nabla_{X'}\nabla_{Y'-iI_t(Y'')}-i\nabla_{Y'}\nabla_{I_t(X'')}+i\nabla_{I_t(Y'')}\nabla_{I_t(X'')} \\
		 &\, + \nabla_{[X'',Y']-iI_t([X'',I_t(Y'')]'')} \\
		 &\, - \nabla_{[X'',I_t(Y'')]'-iI_t([X'',I_t(Y'')]'')} \\
		 &\, -k(i\omega(X'',Y') - \omega(X'',I_t(Y''))).
  \end{align*}
  From these formulae we immediate get the first part of the proposition, since we can use the above two formulae to rewrite $u(\tilde \sigma')|_{N_c}$ to obtain an operator $\tilde u_c(\tilde \sigma_t')$, such that the evolution of $s_t|_{N_c}$ is determined by \eqref{eq10}.
  
  Let us now use the notation $G_t = G(\tilde \sigma_t')$.
  
  \begin{claim}\label{C1}
    There exists a unique section $G_\infty \in C^\infty(N'_c,S^2(F_{\tilde P, \bar \sigma} \cap \bar F_{\tilde P, \bar \sigma}))$ such that
    \begin{align*}
		  \lim_{t \to \infty} G_t = G_\infty.
    \end{align*}
  \end{claim}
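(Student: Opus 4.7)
The plan is to compute $G_t$ directly from its definition via the induced complex structure on $N$ and then exploit the known asymptotics of the Beltrami differentials representing $\tilde\sigma_t'$ in order to extract the limit.

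First I would represent $\tilde\sigma_t' \in T_{\tilde\sigma_t}\tilde\calT$ by a Beltrami differential $\mu_t$ on $\tilde\Sigma$ supported in a neighborhood of the curves of $\tilde P$, namely the infinitesimal plumbing (cylinder-lengthening) direction written in the cylindrical coordinates of $\tilde\sigma_t$. Under the Narasimhan--Seshadri (respectively Mehta--Seshadri) correspondence, the tangent space to $N$ at a smooth point $[A]$ is identified with $H^1(\tilde\Sigma_{\tilde\sigma_t}, \ad(E_A))$, and the derivative $\tilde\sigma_t'[I]$ is given, via Kodaira--Spencer, by contraction with $\mu_t$ sending $(1,0)$-classes to $(0,1)$-classes. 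This produces an explicit formula for $\tilde G(\tilde\sigma_t')$ and hence for $G_t$ in terms of harmonic representatives on $\tilde\Sigma_{\tilde\sigma_t}$.

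Next I would invoke the standard asymptotic decomposition of the Hodge structure on $\tilde\Sigma_{\tilde\sigma_t}$ as $t \to \infty$. Harmonic one-forms with values in $\ad(E)$ split into a contribution from the complement $\bar\Sigma$ (which stabilizes to the Hodge structure for $\bar\sigma$) and contributions from the cylinders, each decomposing into Fourier modes around the corresponding curve of $\tilde P$. All non-constant Fourier modes along each cylinder decay exponentially in $t$ on the support of $\mu_t$, so in the limit only the constant modes, corresponding to infinitesimal changes of the holonomy around each curve of $\tilde P$, survive when paired against $\mu_t$. The key geometric observation is that these surviving cylinder modes span precisely the kernel of $d\pi_c \colon TN_c \to T\bar N^c$, that is, the real distribution whose complexification is $F_{\tilde P,\bar\sigma} \cap \bar F_{\tilde P,\bar\sigma}$. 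Since $G_t$ is bilinear in these cylindrical modes, its limit $G_\infty$ automatically takes values in $S^2(F_{\tilde P,\bar\sigma} \cap \bar F_{\tilde P,\bar\sigma})$; convergence is smooth on $N'_c$ with exponential rate in $t$, and uniqueness of the limit is automatic.

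The hard part is the matching of the analytic Hodge-theoretic description of $\tilde\sigma_t'[I]$ with the symplectic-geometric identification of the isotropic leaves of $F_{\tilde P,\bar\sigma}$ as the fibers of $\pi_c$; in particular one must verify that the Kodaira--Spencer contributions in directions transverse to $F \cap \bar F$ actually vanish in the limit rather than merely being subdominant to the divergent behavior of sections of $S^2(T_{\sigma_t})$ near a degeneration. Once this matching is in place, the remaining estimates are straightforward consequences of the exponential decay of higher Fourier modes on long flat cylinders, together with the rigidity of the family $I$ ensuring that $G_t$ is holomorphic in $\sigma_t$ and hence well-behaved at fixed $x \in N'_c$.
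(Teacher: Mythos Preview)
Your approach is plausible but takes a genuinely different route from the paper's. The paper does \emph{not} go back to the Riemann surface at all for this claim. Instead it works entirely on the moduli space: it fixes a point $x_0\in N'_c$, chooses a local symplectic frame $(p,q)$ with $\mathrm{Span}\,p$ Lagrangian and adapted so that $\mathrm{Span}\,p' = F_{\tilde P,\bar\sigma}\cap\bar F_{\tilde P,\bar\sigma}$, and writes the holomorphic tangent space $P_t$ as the graph of a symmetric complex matrix $Z_t$ via $w^{(t)}=p+Z_t q$. Theorem~\ref{thm10} (the convergence of $P_t$ to $F_{\tilde P,\bar\sigma}$, imported from \cite{A3} together with its $C^\infty$-rate) is then used as a black box to give the expansion $Z_t = Z_\infty t^{-1}+R(t)$, and a direct linear-algebra computation yields $G_t = -\tfrac{i}{2}\,Y_t^{-1}\dot{\bar Z}_t\,Y_t^{-1}$ in the $w$-frame, from which the limit $G_\infty$ and the fact that it lands in $S^2(F\cap\bar F)$ are read off by elementary matrix asymptotics.

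What you propose---expressing $\tilde\sigma_t'[I]$ via a Beltrami differential, passing through $H^1(\tilde\Sigma_{\tilde\sigma_t},\ad E)$, and Fourier-decomposing harmonic forms on the lengthening cylinders---would essentially reprove the analytic content of Theorem~\ref{thm10} rather than invoke it. That is legitimate and arguably more conceptual, but it is heavier: the delicate point is that $G_t$ has a \emph{finite nonzero} limit, which in the paper's frame is the cancellation $Y_t^{-1}\dot{\bar Z}_t Y_t^{-1}\sim t\cdot(-t^{-2})\cdot t$; in your picture you must check the corresponding scaling of $\mu_t$ against the degenerating Hodge inner product, not merely that transverse Fourier modes decay. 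Also, one small slip: rigidity of $I$ means $\bar\partial_\sigma G(V)_\sigma=0$ as a section over $M$, not holomorphicity of $G_t$ in the Teichm\"uller parameter $\sigma_t$; it plays no role in this limit argument.
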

  In order to establish the claim, we consider a point $x_0 \in N_c'$ and a local symplectic frame $(w,v)$ of $TN'$ around $x_0$ with the following properties: The bundles $O = \mathrm{Span} \ p$ and $Q = \mathrm{Span} \ q$ are complementary Lagrangian subbundles of $TN'$ and further that $p = (p',p'')$, such that
  \begin{align*}
		\mathrm{Span} \ p' = F_{\tilde P, \bar \sigma} \cap \bar F_{\tilde P, \bar \sigma} \cap TN'.
  \end{align*}
  We now observe that there is a unique complex symmetric matrix $Z_t(x)$ depending smoothly on $x$ near $x_0$, such that
  \begin{align*}
		w^{(t)}(x) = p(x) + Z_t(x) q(x)
  \end{align*}
  spans $P_t(x)$, the fiber of the holomorphic tangent bundle of $N'$ at $x$ with respect to the complex structure induced from $\tilde \sigma_t$. If we write $Z_t = X_t + iY_t$, where $X_t$ and $Y_t$ are real, then from its definition we conclude that $X_t$ and $Y_t$ are symmetric and $Y_t > 0$. The decomposition $p = (p',p'')$ gives a corresponding decomposition of $q = (q',q'')$. This decomposition gives the following block-decomposition of $Z_t$:
  \begin{align*}
		Z_t = \begin{pmatrix} Z_t^{(11)} & Z_t^{(12)} \\ Z_t^{(21)} & Z_t^{(22)} \end{pmatrix}.
  \end{align*}
  By Theorem~\ref{thm10}, we have the following asymptotics:
  \begin{align*}
		Z_t \to \begin{pmatrix} 0 & 0 \\ 0 & Z_\infty \end{pmatrix}
  \end{align*}
  as $t$ goes to infinity, where $Z_\infty = X_\infty + iY_\infty$ and $Y_\infty > 0$. By examining the proofs of Theorem~10 in \cite{A3}, one sees immediately that the convergence of $P_t$ to $F_{\tilde P , \bar \sigma}$ is a convergence in the $C^\infty$-topology on $N'$. In particular, we have that
  \begin{align*}
		Z_t = Z_\infty + Z_\infty' t^{-1} + R(t).
  \end{align*}
  
 Let us now analyse the case where 
   \begin{align*}
		O = F_{\tilde P, \bar \sigma}  = F_{\tilde P, \bar \sigma} \cap \bar F_{\tilde P, \bar \sigma} \cap TN'.
  \end{align*}
  The other cases are treated completely analogously.
  
Let $L_t$ be a symplectic local bundle 
transformations of $TN'\otimes {\bC}$ such that  $L_t(O)=P_t$ and $L_\infty = \Id$.   
In this basis we have:
\[L_t = \begin{pmatrix}A & B \\ 
C & D\end{pmatrix} \to \begin{pmatrix}\Id & 0 \\
0 & \Id \end{pmatrix}
\]
as $t\to \infty$.
Since $A \to \Id$ as $t\to \infty$, we may assume that $A$ is invertible.   
The symplectic transform
\[
\begin{pmatrix}
	A^{-1} & 0 \\ 
	-C^t & A^t
\end{pmatrix}
\]
preserves $O$ so we consider
\[
\begin{pmatrix}
	A^{-1} & 0 \\
	-C^t & A^t
\end{pmatrix}
\begin{pmatrix}
	A & B \\ 
	C & D
\end{pmatrix}
= 
\begin{pmatrix}
	\Id & A^{-1}B \\
	A^tC - C^tA & A^tD-C^tB
\end{pmatrix}
= 
\begin{pmatrix}
	\Id & A^{-1}B \\
	0 & \Id
\end{pmatrix}
\]
which must map $O$ onto $P_t$. Hence, $Z=A^{-1}B$ and 
\[
	w_i = p_i \sum Z_{ij} q_j = p_i + \sum X_{ij}q_j + i\sum Y_{ij}q_j
\]
and
\[
	\bar w_i = p_i \sum \bar Z_{ij} q_j = p_i + \sum X_{ij}q_j - i\sum y_{ij}q_j
\]
is a basis of $\bar P_t$ (we have here suppressed the $t$-dependence of the $w_i$'s).
Since $P_t \cap \bar P_t = \{0\}$ we that $O\cap P_t=\{0\}
	$. This follows since $P_t$ corresponds to  $I_t$  and $I_t(O)\cap O = \{0\}$ since $O$ is Lagrangian.  
	
	\begin{claim}
	$P\cap	P_t=\{0\} \Leftrightarrow \det(Z)\neq 0$.    
	
\end{claim}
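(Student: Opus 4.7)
The proof should be a direct linear-algebra computation in the chosen symplectic frame $(p,q)$. By construction $P_t$ admits the spanning set $\{w_i\}$ with $w_i = p_i + \sum_j Z_{ij} q_j$, while $O = \mathrm{Span}(p)$. Hence any $v \in P_t$ can be written uniquely as a complex linear combination $v = \sum_i c_i w_i$, and expanding gives
$$
v = \sum_i c_i p_i + \sum_j (Z^T c)_j q_j.
$$
Since $(p,q)$ is a basis of $TN'\otimes\bC$, membership of $v$ in $O$ is equivalent to the vanishing of all the $q$-coefficients, that is, to the linear system $Z^T c = 0$ on the coefficient vector $c$.

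From here the equivalence is immediate. If $\det Z \neq 0$, then $Z^T$ is invertible, so the only solution of $Z^T c = 0$ is $c = 0$, which forces $v = 0$; hence $O \cap P_t = \{0\}$. Conversely, if $\det Z = 0$, there is a nonzero $c$ in the kernel of $Z^T$, and the corresponding $v = \sum_i c_i w_i = \sum_i c_i p_i$ is a nonzero element of $O \cap P_t$. This gives the claimed equivalence.

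There is essentially no obstacle: the entire argument is a restatement of the fact that $Z$ was introduced precisely as the matrix encoding the deviation of the new Lagrangian $P_t$ from the reference Lagrangian $O$ inside the complementary Lagrangian $Q$. The only mild care needed is the bookkeeping of the transpose when passing from $\sum_i c_i Z_{ij}$ to $(Z^T c)_j$, after which the equivalence of nontrivial intersection with the singularity of $Z$ drops out at once. Note also that this purely algebraic statement makes no use of the symmetry or positivity properties of $Z_t = X_t + i Y_t$ established earlier; it is the reason why the convergence $Z_t \to \mathrm{diag}(0, Z_\infty)$ will later control the degeneration of $P_t$ to the limiting polarization.
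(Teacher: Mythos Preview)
Your proof is correct and follows essentially the same approach as the paper: both expand an arbitrary element of $P_t$ in the $w_i$-basis, read off the $q$-coefficients, and observe that membership in $O$ is equivalent to $c$ lying in the kernel of $Z$ (or $Z^T$). The only difference is cosmetic: the paper writes the condition as $\sum_i c_i Z_{ij}=0$ for all $j$, while you package it as $Z^T c = 0$; since $Z$ is symmetric this distinction is immaterial, and indeed your version is slightly cleaner (the paper's printed proof even contains an apparent typo in the direction of the first implication).
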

\proof
	Assume 
	$\det{Z}\neq 0$. Then there exists a non zero vector $c$ such that 
	$$\sum_i c_iZ_{ij}=0, \qquad j=1, \dots, n.$$  Hence $\sum 
	c_iw_i = \sum c_i p_i$ thus  $O \cap P_t \neq \{0\}$.  
	Conversely, if 
	$O\cap P_t\neq \{0\}$, let $c$ be such that 
	$$\sum c_iw_i \in O\cap P_t-\{0\}.$$   But then 
	$$\sum c_iw_i = \sum c_ip_i + \sum_j (\sum_i c_iZ_{ij})q_j \in P\cap P_t-\{0\}$$
	which implies that $\sum_i c_iZ_{ij} = 0$ for  $j=1, \dots, n$., thus  $\det Z = 0$.  
	\eproof

\begin{claim}
	$P_t\cap \bar{P_t}=\{0\}$ if and only if $\det{Y}\neq 0$.
\end{claim}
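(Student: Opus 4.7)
The plan is to mimic the proof of the previous claim, replacing the comparison of $P_t$ with $O$ by the comparison of $P_t$ with its complex conjugate $\bar{P}_t$. Using the explicit bases $w_i = p_i + \sum_j Z_{ij} q_j$ of $P_t$ and $\bar{w}_i = p_i + \sum_j \bar{Z}_{ij} q_j$ of $\bar{P}_t$, a nonzero element in $P_t \cap \bar{P}_t$ is of the form $\sum_i c_i w_i = \sum_i d_i \bar{w}_i$ for some tuples $c, d$ not both zero.

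First I would note that, since $(p, q)$ is a basis of $TN' \otimes \mathbb{C}$, a vector $\sum_i c_i w_i$ vanishes only if $c = 0$ (its $p$-component is $\sum_i c_i p_i$). In particular, any nonzero element of the intersection corresponds to $c \neq 0$ and $d \neq 0$. Expanding $\sum_i c_i w_i = \sum_i d_i \bar{w}_i$ in the basis $(p, q)$ and equating $p$-coefficients gives $c = d$, while equating $q$-coefficients yields
\[
\sum_i c_i Z_{ij} = \sum_i c_i \bar{Z}_{ij}, \qquad j = 1, \dots, n.
\]
Since $Z - \bar{Z} = 2iY$, this is equivalent to $\sum_i c_i Y_{ij} = 0$ for all $j$, i.e., $c^T Y = 0$.

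Therefore $P_t \cap \bar{P}_t \neq \{0\}$ if and only if there exists a nonzero $c$ with $c^T Y = 0$, which is equivalent to $\det Y = 0$. Taking contrapositives gives the statement of the claim. There is no real obstacle here, as the argument is a direct linear-algebra translation of the previous claim, with the only subtlety being the observation that $Y$ is real symmetric so that $\det Y = 0$ detects the existence of a real (equivalently complex) null vector of $Y$.
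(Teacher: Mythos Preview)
Your proof is correct and reaches the same linear-algebraic reduction as the paper: a nonzero element of $P_t \cap \bar{P}_t$ exists precisely when $Y$ has a nontrivial kernel. The execution differs slightly. The paper exploits that $P_t \cap \bar{P}_t$ is conjugation-invariant, so a nonzero intersection vector may be taken in the real tangent space $TM$; writing $\sum c_i w_i$ real forces $\sum c_i w_i = \sum \bar{c}_i \bar{w}_i$, and splitting $c$ and $w$ into real and imaginary parts yields $c_i'' = 0$ and $\sum_i c_i' Y_{ij} = 0$ with $c'$ real nonzero. You instead work directly over $\mathbb{C}$ with two a priori unrelated coefficient vectors $c,d$, deduce $c = d$ from the $p$-components, and then obtain $c^T Y = 0$ from the $q$-components. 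Your route is a bit more streamlined since it avoids the real/imaginary decomposition; the paper's route makes explicit that the null vector of $Y$ can be chosen real, though as you note this is automatic for a real matrix. Either way the equivalence with $\det Y = 0$ is immediate.
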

\begin{proof}
	Assume $\det{Y}=0$ then there exist $(x_1,\ldots,x_n)\in \bR^n
        -\{0\}$ s.t.
	$\sum x_i Y_{ij}=0$. Now, $\sum x_i w_i= \sum x_i \bar w_i \neq 0$ and hence $P_t\cap \bar P_t \neq \{0\}$.
	
	Conversely, assume $P_t\cap \bar P_t \neq \{0\}$. Let $(c_1,\ldots,c_n)\in \bC^n - \{0\},$ such that 
	$\sum c_i w_i \in P_t \cap \bar P_t \cap TM$. But then 
	
	$$\sum(c_iw_i-\bar c_i \bar w_i)=0,$$
	if and only if
	$$\sum(c_i'w_i'-c_i''w_i''+i(c_i'w_i''+c_i''w_i')-c_i'w_i'+c_i''w_i''+i(c_i'w_i''+c_i''w_i'))=0$$
	which is equivalent to 
	$$ 2\sum \left( c_i'Y_{ij}q_j + c_i''p_i + c_i''X_{ij}q_j \right) = 0,$$
	happening if and only if
	$$ c_i'' = 0 \text{ and } c_i'=0$$
	 which is the case if and only if $\det{Y}=0.$
\end{proof}

Notice that $TM/O \simeq Q$ and so
	$$ \dot{L}_{\infty} \in C^{\infty}(O,TM/O)$$
	is represented by $\dot{Z}_{\infty}$ in the basis $(p_1,\ldots,p_n)$ of $O$ and $(q_1,\ldots,q_n)$ of $Q\simeq TM/O$.
	By Proposition 2.3 p.118 in \cite{GS}, we can identify the space of lagrangian subspaces transverse to a given one $O$ as an affine space associated to the vector space $S^2(TM/P)$, which we can identify with $S^2(Q)$.
The quadratic form associated with $P_t$ becomes 
	$$ H_t(q_1,y_z) = (\pi_t y_1,y_z), $$
	where $\pi_t$ is the projection from $TN'\otimes \bC$ onto $P_t$ along $O$.
	Now $w_i = \pi_t (w_i) = \sum Z_{ij}^{-1}w_j$, and so 
	$$ H_t(q_i,q_j)=Z_{ij}^{-1}.$$
	Now $I_t$ is determined from $P_t$ by the condition that 
	$$ P_t = E(I_t,i), $$
	and 
	$$ \bar P_t = E(I_t,-i).$$
	Hence
	$$ I(p_i)+\sum X_{ij}I(q_j)+i\sum Y_{ij}I(q_j) = i p_i + i\sum X_{ij}q_j - \sum Y_{ij}q_j $$
	and 
	$$ I(p_i) + \sum X_{ij}I(q_j) - i\sum Y_{ij} I(q_j) = -iq_i - i\sum X_{ij} q_j - \sum Y_{ij}q_j,$$
	which implies 
	$$ I(q_j) = \sum_{k} Y_{jk}^{-1}\left( p_k + \sum X_{ki}q_i \right) = \sum_k Y_{jk}^{-1}p_k + \sum_{k,i} Y_{jk}^{-1}X_{ki}q_i,$$
	and
\begin{align*}	
	 I(p_i) 	&= - \sum Y_{ij}q_j - \sum X_{ij} I(q_j) \\
	 			&= - \sum Y_{ij}q_j - \sum X_{ij}Y_{jk}^{-1}p_k - \sum X_{ij}Y_{jk}^{-1}X_{kl}q_l \\
				&= - \sum X_{ij}Y_{jk}^{-1}p_k - \sum \left( Y_il + \sum X_{ij}Y_{jk}^{-1}X_{kl} \right)q
\end{align*}
This gives us the following matrix presentation
\[
	\begin{pmatrix}
		I(p) \\
		I(q)
	\end{pmatrix}
= 
	\begin{pmatrix}
		-XY^{-1}	& - (Y+XY^{-1}X) \\
		Y^{-1}		& Y^{-1X}
	\end{pmatrix}
	\begin{pmatrix}
		p \\
		q
	\end{pmatrix}.
\]

A simple computational check shows that this this matrix indeed squares to $-\Id$. Let us now compute the derivative of $I_t$.
\[
	\begin{pmatrix}
	\dot I (p) \\
	\dot I (q) 
	\end{pmatrix}
=
	\begin{pmatrix}
	-\dot{X} Y^{-1} - X\dot{Y}^{-1}		& -(\dot{Y}-\dot{(XY^{-1}X)}) \\
	\dot{Y}^{-1} 						& \dot{Y}^{-1}X+Y^{-1}\dot{X}
	\end{pmatrix}
	\begin{pmatrix}
	p \\
	q
	\end{pmatrix}
\]
Using that $YY^{-1}=\id$ we find that
\[
 \dot{Y}^{-1}=Y^{-1}\dot{Y}Y^{-1}.
\]
		
On the other hand, using
\[
	I(w_i)=iw_i
\]
we compute that
\[
	\dot I(w_i) = i \dot w_i - I(\dot w_i) = (i\Id-I)\dot w_i
\]
and that
\[
	\dot w_i = \dot X_{ij}q_j +i \dot Y_{ij} q_j = (\dot X_{ij} +i \dot Y_{ij})q_j = (\dot X + i \dot Y) Y^{-1} Y q_j
\]
which gives 
\begin{align*}
	I(\dot w_i) 	&= (\dot X_{ij}+i\dot Y_{ij})I(q_j) \\
					&= \dot X_{ij}Y_{jk}^{-1} p_k + i \dot Y_{ij}Y_{jk}^{-1}p_k \\
					&= \dot X_{ij}Y_{jk}^{-1} X_{kl} q_l + i \dot Y_{ij}Y_{jk}^{-1}X_{kl}q_l .
\end{align*}
This gives us the following formular
\begin{align*}
	\dot I(w) 	&= -(\dot X + i\dot Y) Y^{-1} p +(\dot X + i \dot Y)Y^{-1}(iY-X)q \\
					&= -(\dot X + i\dot Y) Y^{-1}(p+Xq-Yq)\\
					&= -(\dot X+i \dot Y) Y^{-1} \bar w	 \\
					&= -\dot Z Y^{-1}\bar w.
\end{align*}
But $I(w_i)=i w_i$, so $\dot I(w) = i \dot w -I(\dot w).$ Now
\[
	\dot w = \dot Z q = \dot ZY^{-1}Y q, 
\]
so we conclude
\[
	I(\dot w) = \dot Z Y^{-1}(p+Xq).
\]
Which implies that
\[
	\dot I(w) = -\dot Z Y^{-1} (p+Xq-iYq) = -\dot Z Y^{-1} \bar{w}.
\]
Hence, with respect to the local frames we have the local matrix presentations
\[
	\dot I = -\dot Z Y^{-1} \in C^{\infty}(P_t^* \otimes \bar P_t) \simeq C^{\infty}(\Hom(P_t,\bar P_t)) 
\]
and 
\[
	\dot I = -\dot{\bar Z} Y^{-1} \in C^{\infty}(\bar P^*_t \otimes P_t) \simeq C^{\infty}(\Hom(\bar P_t,P_t)).
\]
So we have the following formula for the derivative of the complex structure
\[
	\dot I = -\sum_{i,j,k} \dot{\bar Z}_{jk}Y^{-1}_{ki}w_i \otimes \bar{w_j}^* = \sum_{i,j} a_{ij}w_i \otimes \bar{w}_j^*.
\]
So if $w=\sum \omega_{ij} w_i^* \wedge \bar{w}_j^*$ then
\[
	a_{ij}=\sum G_{ik}w_{kj}
\]
so
\[
	\sum_k a_{ik} w^{-1}_{kj}=-\sum_{l,k}Y^{-1}_{ik}\dot{\bar Z}_{kl} \omega_{lj}^{-1}.
\]
Define
\[
	Z^{-1}=V+iW.
\]
Then
\[
	XV-YW=\id \quad YV+XW=0
\]
and hence
\[
	V=-Y^{-1}XW=-WXY^{-1}.
\]
\[
	XY^{-1}XW+YW=(XY^{-1}X+Y)W=\id.
\]
Now 
\[
	(XY^{-1}X+Y)v=0 
\]
will imply that $Yv=-XY^{-1}Xv $ which gives
\[0\leq (Yv,v)=-(XY^{-1}Xv,v)=-(Y^{-1}Xv,Xv)\leq 0.
\]
Thus $(Yv,v)=0$ and therefore $v=0$.
\[
	W=-(XY^{-1}X+Y)^{-1}.
\]
\[
	V=Y^{-1}X(XY^{-1}X+Y)^{-1}.
\]

Let $(p_i^*,q_i^*)$ be a basis of $T^*N'$ dual to the basis $(p_i,q_i)$ of $TN'$. So $\omega=\sum p_i^*\wedge q_i^*$. Let $(w_i^*,\bar w_i^*)$ be a basis of $T^*N'$ dual to the basis $(w_i,\bar w_i)$ of TN'. Then
\[
	O^*= span\{ p_1^*,\ldots,p_n^* \}
\]
and
\[
	\bar P_t^*= span\{ \bar w_1^*,\ldots, \bar w_n^* \}.
\]
A short computation gives that
\[	\begin{pmatrix}
		w^* \\
		\bar w^*
	\end{pmatrix}
	=	\frac{i}{2}
	\begin{pmatrix}
		Y^{-1}	& 0 \\
		0 		& Y^{-1}
	\end{pmatrix}
	\begin{pmatrix}
		\bar Z	& -\id \\
		-Z 		& \id
	\end{pmatrix}
	\begin{pmatrix}
	p^* \\
	q^*
	\end{pmatrix}.
\]
Let us now compute the symplectic form on the $(w^*,\bar w^*)$ basis.
\begin{align*}
	\omega 	&= \sum_{i=1}^n p_i^* \wedge q_i^* \\
			&= \sum_{\substack{i=1\\ j=1}}^n (w_i^* \bar w_i^* ) \wedge (Z_{ij}w_j^* + \bar Z_{ij} \bar w_j^*)\\
			&= \sum_{\substack{i=1\\ j=1}}^n w_i^* \wedge Z_{ij} w_j^* + \sum_{\substack{i=1\\ j=1}}^n \bar w_i^* \wedge  \bar Z_{ij} \bar w_j^*+ \sum_{\substack{i=1\\ j=1}}^n w_i^* \wedge  \bar Z_{ij} \bar w_j^* + \sum_{\substack{i=1\\ j=1}}^n \bar w_i^* \wedge Z_{ij} w_j^* \\
			&= \sum_{i<j}^n (Z_{ij}(w_i^* \wedge w_j^*+w_j^* \wedge w_i^*)+ \bar Z_{ij}(\bar w_i^* \wedge \bar w_j^*+\bar w_j^* \wedge \bar w_i^*)) -2i \sum_{i,j} w_i^* \wedge Y_{ij}\bar w_j^* \\
			&= -2i \sum_{i,j=1} w_i^* \wedge Y_{ij} \bar w_j^* = -2i w^* \wedge Y \bar w^* \\
			&= -2i \sum_{i,j} Y_{ij} w_i^* \wedge \bar w_j^*,
\end{align*}
hence $\omega_{ij}= -2i Y_{ij}.$
From this we see that 
\[
	G= \sum_{i,j} G_{ij} w_i \otimes w_j = -\frac{i}{2} \sum_{i,j,k,l} Y^{-1}_{ik} \dot{\bar Z} Y^{-1}_{lj} w_i \otimes w_j
\]
Let $\pi_t: T^*M \to P_t^*$ be the projection onto $P_t^*$, whose kernel is $\bar P_t^*$, i.e. compatible with $T^*M=P_t^* \oplus \bar P_t^*$, and let 
$\pi'_t: T^*M\to O^*$ be the projection onto $O^*$, whose kernel is $\bar P^*_t$, i.e. compatible with $T^*M=O^*\oplus \bar P^*_t$.
Since $\Im(1-\pi_t)=\Ker \pi_t  = \Ker \pi_t' =\Im(1-\pi_t')$
we see that 
\[
	\pi_t \circ \pi_t' = \pi_t(\pi_t'+(1-\pi_t'))=\pi_t
\]
and 
\[
	\pi_t'\circ\pi_t = \pi_t'(\pi_t+(1-\pi_t))= \pi_t'.
\]
Let us now compute $\pi_t$ and $\pi_t'$ in the respective bases. We have that
\[
	-2i Y(w^*+\bar w^*)=(\bar Z -Z)p^* = -2i Y p^*
\]
which implies 
\begin{equation}\label{star}
	p^* = w^*+\bar w^*
\end{equation}
and further that
\[
	-2i Y (w^* - \bar w^*)=-2 q^* +2Xp^*.
\]
This implies
\[
	q^*= i Y (w^* - \bar w^*)+X(w^* + \bar w^*) = Zw^*+\bar Z \bar w^*.
\]
So 
\[
	\pi_t(p^*)=w^* =\frac{i}{2}Y^{-1}(-q^*+\bar Z p*), \quad \pi_t(q^*)=Z w^* = \frac{i}{2}ZY^{-1}(-q^* + \bar Z p^*).
\]
Because of \eqref{star} we see that
\[
	\pi_t'(w^*)=p^*.
\]
and 
\[
	\pi_t'(\bar w^*) =0
\]
so
\[
	\pi_t'(q^*)=\pi_t'(Zw^*)+\pi_t'(\bar Z \bar w^*) = Zp^*.
\]
Let us define the following operators
$$D' = \pi'_t \circ \nabla^{1,0} : C^\infty(\L^k) \ra C^\infty(O^*\otimes \L^k)$$
$$ G' = \pi'_t \circ G \circ \pi_t : C^\infty(O^*\otimes \L^k) \ra C^\infty(O\otimes \L^k)$$
$$ D'' = \pi'_t \circ (\nabla^{1,0}\otimes \Id \oplus \Id \otimes \nabla^{1,0})  \circ \pi_t : C^\infty(O\otimes \L^k) \ra C^\infty(O^*\otimes O\otimes \L^k).$$
On $\ker \nabla^{0,1}$, we shall now compute $\tr(D''GD')$. Hence if we have a section $s$ of $\L^k$ over $N'$, which is holomorphic, we have that
\[
	 \nabla_{p_i}s=-\nabla_{\bar Z_{ij} q_j}s,
\]
which we will use a number of times below. From the above we have that
\[
	G(w_i^*)=\frac{i}{2}\sum Y_{ik}^{-1}\dot{ \bar Z}_{kl} Y^{-1}_{lj}w_j
\]
Now
\[
	w_i+\bar w_i = 2 \left(p_i \sum X_{ij}q_j \right)
\]
and
\[
	w_i-\bar w_i = 2 i \sum Y_{ij}q_j
\]
so 
\[
	q_i=-\frac{i}{2}\sum Y_{ij}^{-1} (w_j-\bar w_j)
\]
and therefore we have that
\[
	w_i+\bar w_i = 2p_i - i\sum X_{ij}Y^{-1}_{jk}(w_k-\bar w_k)
\]
which implies 
\[
	\sum_{k}\left( \delta_{ik}+ i\sum X_{ij}Y^{-1}_{jk} \right)w_k = 2 p_i + \sum_{k} \left( i\sum X_{ij}Y^{-1}_{jk} - \delta_{ik}\right).
\]
Now
\[
	i \bar Z Y^{-1}w= 2p + iZY^{-1} \bar {w}
\] which gives
\[
 w=-2iY\bar Z^{-1}p+Y\bar Z^{-1}ZY^{-1}\bar w
\]
hence
\[
	\pi_t'(w)=-2i Y\bar Z^{-1}p, \quad \pi_t(p)= \frac{i}{2}\bar Z Y^{-1} w.
\]
So
\[
	\nabla^{1,0}s = \sum_{i=1}^n w_i^* \otimes \nabla_{w_i}s = \sum_{i=1}^n w_i^* \otimes \left( \nabla_{p_i}s + \nabla_{Z_{ij}q_j}s \right).
\]
and then if we use $\nabla^{0,1}s=0$ then
\[
	\nabla^{1,0}s = \sum_{i=1}^n w_i^* \otimes \nabla_{w_i}s = -2i \sum_{i,k,l=1} Y_{ik}\bar Z_{kl}^{-1}\nabla_{pl}s \otimes w_i^*,
\]
and hence
\[
	D' s = \pi'_t \nabla^{1,0}s = -2i \sum_{i,k,l=1}^n Y_{il} \bar Z^{-1}_{lk} p_i^* \otimes \nabla_{p_k}s.
\]
Now 
\begin{align*}
	G'(p_i^*)	&= \pi_t' \circ G \circ \pi_t(p_i^*) \\
				&= \pi_t'\circ G(w_i^*) \\
				&= -\frac{i}{2}\pi_t' \left( \sum_{k,l} Y^{-1}_{ik}\dot{\bar Z}^{-1}_{kl} Y_{lj}^{-1} w_j\right) \\
				&= - \sum Y_{ik}^{-1} \dot{\bar Z}_{kl} Y^{-1}_{lj}Y_{jr}\bar Z_{rs}^{-1} p_s \\
				&= -\sum_{k,l,r} Y^{-1}_{ik} \dot{\bar Z}_{kl} \bar Z_{lr}^{-1}p_r.
\end{align*}
so
\begin{align*}
G' \circ D's 	&= 2i \sum Y_{il} \bar Z_{lk}^{-1} Y_{ir}^{-1}\dot{\bar Z}_{rs}\bar Z_{st}^{-1} p_t \otimes \nabla_{p_k}s \\
				&= 2i \sum \bar Z_{lk}^{-1} \dot{ \bar Z}_{ls} \bar Z_{st}^{-1} p_t \otimes \nabla_{p_k} s \\
				&= 2i \sum \bar Z_{jl}^{-1} \dot{\bar Z}_{lk} \bar Z_{ki}^{-1} p_i \otimes \nabla_{p_j} s
\end{align*}
giving
\begin{align*}
	\pi_t \circ G' \circ D's 	&= -\sum \bar Z_{jl}^{-1}\dot{\bar Z}_{lk} \bar Z_{ki}^{-1} \bar Z_{ir} Y_{rs}^{-1} w_s \otimes \nabla_{p_j} s\\
								&= -\sum \bar Z_{jl}^{-1}\dot{\bar Z}_{lk} Y_{ki}^{-1} w_i \otimes \nabla_{p_j} s \\
								&= G\circ \nabla^{1,0}s.
\end{align*}
and thus
\begin{align*}
	\left( \nabla^{1,0}\otimes 1 + 1 \otimes \nabla^{1,0} \right) \circ G \circ \nabla^{1,0}s 
	= \,	& -\sum \bar Z_{jr}^{-1}\dot{\bar Z}_{rk} Y^{-1}_{ki} w_l^* \otimes \nabla_{w_l}(w_i) \otimes \nabla_{p_j}s \\
			& -\sum \bar Z_{jr}^{-1}\dot{\bar Z}_{rk} Y^{-1}_{ki}w_l^* \otimes w_i \nabla_{w_l} \nabla_{p_j} s \\
			& -\sum d(\bar Z_{jr}^{-1}\dot{\bar Z}_{rk} Y^{-1}_{ki})(w_l) w_l^* \otimes w_i \otimes \nabla_{p_j} s.
\end{align*}
Write
\[
	\nabla_{w_l}(w_i)=\sum C_{j}^{l,i}w_j
\]
Also, we rewrite
\begin{align*}
	\nabla_{w_l} \nabla_{p_j} s 		
		&=	-2i \sum_{k,m} Y_{l,k} \bar Z_{km}^{-1} \nabla_{p_m} \nabla_{p_j}s + \sum_{k,m,l,r}Y_{lk}\bar Z_{km}^{-1}Z_{ml}Y_{lr}^{-1} \nabla_{\bar w_r}		 	\nabla_{p_j}s \\
		&=	-2i \sum_{k,m} Y_{l,k} \bar Z_{km}^{-1} \nabla_{p_m} \nabla_{p_j}s + k\sum_{k,m,l,r}Y_{lk}\bar Z_{km}^{-1}Z_{ms}Y_{sr}^{-1} \omega(\bar w_r, 			p_j) s.
\end{align*}
which allows us to conclude
\begin{align*}
	\left( \nabla^{1,0}\otimes 1 + 1 \otimes \nabla^{1,0} \right) \circ G \circ \nabla^{1,0} s 
		= \,	& 	2i \sum \bar Z_{jr}^{-1} \dot{\bar Z}_{rk} Y_{ki}^{-1} Y_{ls} \bar Z_{sm}^{-1} w_l^* \otimes w_i \otimes \nabla_{p_m}\nabla_{p_i} s\\				& 	-  \sum \bar Z_{jr}^{-1} \dot{\bar Z}_{rk} Y_{ki}^{-1} C_{s}^{l,i} w_l^* \otimes w_s \otimes \nabla_{p_j} s \\
				& 	- k \sum \bar Z_{jr}^{-1} \dot{\bar Z}_{rk} Y_{ki}^{-1} Y_{ls} \bar Z_{st}^{-1} Z_{tm} Y_{mn}^{-1} \omega(\bar w_n,p_j) w_l^*\otimes 					w_i \otimes s \\
				&	- \sum d \left( \bar Z_{jr}^{-1} \dot{\bar Z}_{rk} Y_{ki}^{-1} \right)(w_l)w_l^* \otimes w_i \otimes \nabla_{p_j}s.
\end{align*}
So 
\begin{align*}
	\Delta_G s		=\,	& \tr(D''\circ G'\circ D')s  \\
				= 	& - 2i \sum \bar Z_{ik}^{-1} \dot{\bar Z}_{kl} Y_{lj}^{-1} C_{m}^{m,j} \nabla_{p_m}\nabla_{p_j} s \\
					& - \sum d(\bar Z_{ik}^{-1} \dot{\bar Z}_{kl} Y_{lj}^{-1}) (w_j) \nabla_{p_i}s \\
					& - k \sum \bar Z_{jk}^{-1}\dot{\bar Z}_{kl}^{-1} \bar Z_{lr}^{-1} Z_{rm}Y_{ms}^{-1} \omega(\bar w_s,p_j) s,
\end{align*}
here $\omega(\bar w_s,p_j)=-\bar Z_{sj}$.

Since we have that
\[
	Z=Z_{\infty}\frac{1}{t}+R(t), \quad \det Z_{\infty}\neq 0.
\]
where
\[
	t\cdot R(t) \to 0 \quad \text{ as } \quad t\to \infty
\]
and
\[
	t^2R'(t) \to 0 \quad \text{ as } \quad t\to \infty,
\]
we get that
\[
	Z^{-1}=t\cdot Z^{-1}_{\infty}(\id+tR(t)\cdot Z^{-1}_{\infty})^{-1}=t\cdot Z_{\infty}^{-1}+G(t)
\]
such that
\[
	\frac{1}{t}G(t)\to 0 \quad \text{ as } \quad t\to \infty.
\]
From this we see that
\[
	\bar Z^{-1}\dot{\bar Z}\bar Z^{-1}= (t\cdot \bar Z_{\infty}^{-1}+G(t)) \cdot (-\bar Z_{\infty}\cdot\frac{1}{t^2}+R'(t))\cdot(t\bar Z_{\infty}^{-1} + G(t)) = -\bar Z_{\infty}^{-1} + H(t)
\]	
where $H(t)\to 0$ as $t\to \infty$. Hence we have obtained the formula
\begin{equation}\label{lot}
	\lim_{t\to \infty} \Delta_{G}s= 2i \sum_{i,j} (\bar Z^{\infty})_{ij}^{-1} \nabla_{p_i} \nabla_{p_j} s
\end{equation}
Let now consider the first order term of $u(V)$.
\[
\sum_{i,j} 2 G^{i,j} \frac{\partial F}{\partial z_i}\nabla_{j}s =2 G\cdot \partial F \otimes \nabla^{1,0}s=-i \sum_{i,j} Y_{ik}^{-1}\dot{\bar Z}_{kl}Y_{lj}^{-1}dF(w_i) \otimes \nabla_{w_j}s
\]
So using $\nabla^{1,0}s=0$ we obtain
\begin{align*}
	2 G \partial F \otimes \nabla^{1,0}s 
		&= -2\sum_{i,j} Y_{ik}^{-1}\dot{\bar Z}_{kl}Y_{lj}^{-1}  dF(w_i) \otimes Y_{jr} \bar Z_{rs}^{-1}\nabla_{p_s}s \\
		&= -2 \sum_{i,j} Y_{ik}^{-1}\dot{\bar Z}_{kl} \bar Z_{lj}^{-1} dF(w_i)\otimes \nabla_{p_j}s.
\end{align*}

From this we get the following formula for 
\begin{align}\label{fibform}
\tilde{u}_{c,t}(s) = - \frac{1}{4k + 2n} \left( \right. & - 2i \sum \bar Z_{ik}^{-1} \dot{\bar Z}_{kl} Y_{lj}^{-1} C_{m}^{m,j} \nabla_{p_m}\nabla_{p_j} s \\
					& - \sum d(\bar Z_{ik}^{-1} \dot{\bar Z}_{kl} Y_{lj}^{-1}) (w_j) \nabla_{p_i}s \nonumber\\
					& - k \sum \bar Z_{jk}^{-1}\dot{\bar Z}_{kl}^{-1} \bar Z_{lr}^{-1} Z_{rm}Y_{ms}^{-1} \omega(\bar w_s,p_j) s\nonumber \\
					& -2 \sum_{i,j} Y_{ik}^{-1}\dot{\bar Z}_{kl} \bar Z_{lj}^{-1} dF(w_i)\otimes \nabla_{p_j}s\nonumber \\
					& + 4k \dot{F}_t s).\nonumber
\end{align}
where $\dot{F}_t$ refers to the derivative of $F_t$ with respect to the holomorphic part of $\tilde \sigma_t'$.
  
  \begin{claim}\label{C2}
    We have that
    \begin{itemize}
      \item The derivative along the directions of $F_{\tilde P, \bar \sigma} \cap \bar F_{\tilde P , \bar \sigma}$ of $F_t$ converges to zero.
      \item The derivative of $F_t$ with respect to the holomorphic part of $\tilde \sigma_t'$ goes to zero as $t$ goes to infinity.
      \item The function $F_t$ converges to zero, as $t$ goes to infinity.
    \end{itemize}
  \end{claim}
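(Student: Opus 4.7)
The plan is to reduce each of the three statements to an analysis of the Kähler geometry of $N_{\tilde\sigma_t}$ in the limit $t\to\infty$, using the fact established in Theorem \ref{thm10} that the complex structures $I_t$ converge to the non-negative polarization $F_{\tilde P,\bar\sigma}$. Throughout, $F_t$ is the unique mean-zero function characterized by
\[
\Ric_{\sigma_t} = n\omega + 2i\,\partial_{\sigma_t}\dbar_{\sigma_t}F_t,
\]
so, since $\omega$ is fixed and the cohomology class is topological, the problem is to control how close $\Ric_{\sigma_t}$ becomes to its harmonic representative $n\omega$ as $t\to\infty$.

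First, I would upgrade the convergence $Z_t \to Z_\infty$ obtained earlier in this section to $C^\infty$-convergence on compact subsets of the smooth locus $N'_c$ for any fixed $c$ in the interior of $h_{\tilde P}(N)$, with an explicit exponential rate of the form $Z_t = Z_\infty + O(e^{-ct})$ coming from the flat cylinder model used to construct $\tilde\sigma_t$. From this, the induced Kähler metric $g_{\sigma_t}$ converges smoothly on compacta of $N'_c$ to a limit metric which degenerates precisely along the real polarization directions $F_{\tilde P,\bar\sigma}\cap\bar F_{\tilde P,\bar\sigma}$. In particular, the Ricci form $\Ric_{\sigma_t}$ converges smoothly on these compact subsets, and its $t\to\infty$ limit is invariant under the Hamiltonian torus action generated by the Poisson-commuting holonomy functions $h_{\tilde P}$.

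For claim~(1), this torus-invariance of the limit of $\Ric_{\sigma_t}$, combined with the torus-invariance of $\omega$, implies that in the limit $2i\partial\dbar F_t$ is torus-invariant; since $F_t$ is real-valued and mean-zero along each torus fiber, and the torus fibers are compact, the derivatives of $F_t$ along $F_{\tilde P,\bar\sigma}\cap\bar F_{\tilde P,\bar\sigma}$ must go to zero. Claim~(3) then follows by combining this fiberwise vanishing with the mean-zero normalization $\int_N F_t\,\omega^m/m!=0$ and an elliptic estimate: on compact subsets of $N'_c$, one can solve the equation $2i\partial_{\sigma_t}\dbar_{\sigma_t} F_t = \Ric_{\sigma_t}-n\omega$ by standard Calabi-type estimates, and the right-hand side tends to zero in $C^0$ on compacta with rate $O(e^{-ct})$; together with the vanishing of transverse derivatives this forces $F_t\to 0$ uniformly on compacta of $N'$. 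Since the complement of $N'$ has codimension at least two in $N$ away from the codimension-one singular strata $h_{\tilde P}^{-1}(\partial[-2,2]^{\tilde P})$, a Hartogs-type removable singularity argument extends the convergence to a $C^0$-convergence on $N$ away from the boundary strata, which can be handled with the local model $Z_c/Z_{\bar A}$ from Section~\ref{sect3}.

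Claim~(2) is obtained by differentiating the defining equation for $F_t$ in the Teichmüller direction $\tilde\sigma_t'$. This yields an equation of the form
\[
2i\partial_{\sigma_t}\dbar_{\sigma_t}\bigl(\tilde\sigma_t'[F]\bigr) = \tilde\sigma_t'[\Ric]-\bigl(\tilde\sigma_t'[2i\partial\dbar]\bigr)F_t,
\]
whose right-hand side tends to zero uniformly on compacta as $t\to\infty$: the first term by the same argument as in claim~(3) applied with one $\tilde\sigma$-derivative (via Lemma~\ref{lemma4}), and the second by combining claim~(3) with the $C^\infty$-convergence of the complex structure. The same elliptic estimate, together with the mean-zero normalization of $\tilde\sigma_t'[F]$, then gives claim~(2).

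The main obstacle I foresee is the uniformity of the estimates near the singular strata of $N$ that lie over the boundary of $h_{\tilde P}(N)\subset[-2,2]^{\tilde P}$, where the polarization leaves collapse from tori to lower-dimensional fibers of the form $Z_c/Z_{\bar A}$. On these strata, the Kähler metric degenerates in a nonstandard way and one cannot naively apply interior elliptic regularity. The resolution is to use the explicit local model provided by the map $\tilde\Phi_A\colon Z_c\to\pi_c^{-1}([\bar A])$ from Section~\ref{sect3}, writing $F_t$ locally as a $Z_{\bar A}$-invariant function on $Z_c\times\bar N^c$, and carrying out the elliptic estimates equivariantly on the smooth cover $Z_c\times\bar N^c$, where the Kähler degeneration reduces to the well-understood cylindrical degeneration of the complex structure $\bar\sigma_t$ on $\bar\Sigma$.
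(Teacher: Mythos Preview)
The paper's own proof of this claim is a single sentence: ``The claim follows directly from the equations which defines $F_t$ when combined with Theorem~\ref{thm10}.'' In other words, the paper treats this as an immediate consequence of the convergence of the complex structures to the limiting polarization and the defining equation $\Ric_{\sigma_t} = n\omega + 2i\partial_{\sigma_t}\dbar_{\sigma_t}F_t$, with no further analysis offered. Your proposal is therefore far more elaborate than what the paper actually does; you are essentially attempting to supply the content that the paper asserts without argument.

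That said, your outline has a substantive gap. The core step in your argument for item~(3) is the assertion that $\Ric_{\sigma_t} - n\omega \to 0$ in $C^0$ on compacta, which you then feed into an elliptic estimate to conclude $F_t\to 0$. But you do not justify this convergence, and it is not clear that it holds in the naive sense you suggest. The K\"ahler metric $g_{\sigma_t}$ is \emph{degenerating} along the real directions as $t\to\infty$ (in the local model of the paper, $Y_t\to 0$), and the Ricci form of a degenerating metric need not stay bounded, let alone converge to its harmonic part. What one can hope for is that the \emph{combination} $2i\partial_{\sigma_t}\dbar_{\sigma_t}F_t$ stays controlled because the operator $\partial_{\sigma_t}\dbar_{\sigma_t}$ is degenerating in a compensating way, but that is a different (and more delicate) statement than $\Ric_{\sigma_t}\to n\omega$. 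Your torus-invariance argument for item~(1) has the same issue: it presumes a well-defined limit of $\Ric_{\sigma_t}$.

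A secondary point: you posit an exponential rate $O(e^{-ct})$ for the convergence of $Z_t$, coming from the flat-cylinder model. The paper instead records the polynomial expansion $Z_t = Z_\infty + Z_\infty' t^{-1} + R(t)$ just above the claim, and in the proof of Theorem~\ref{thm11} uses only $|P_\infty - P(t)|\le c t^{\alpha}$ with $\alpha<-1$. Whether the true rate is exponential or polynomial depends on exactly how the stretching parameter $t$ is normalized, but for consistency with the surrounding argument you should either use the paper's stated asymptotics or explain why the flat-cylinder model yields a stronger rate than the paper records.
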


 \proof The claim follows directly from the equations which defines $F_t$ when combined with Theorem \ref{thm10}. \eproof

  From these two claims it follows immediately that $\tilde u_c(\tilde \sigma_t')$ has a limit, say $\tilde u_{c,\infty}$ as $t$ goes to infinity, and in fact
  \begin{align*}
		\tilde u_{c,\infty} = \Delta_{G_\infty}.
  \end{align*}
  \begin{claim}\label{C3}
    We have that the kernel of $\tilde u_{c,\infty}$ consists of sections that are convariant constant along $F_{\tilde P , \bar \sigma} \cap \bar F_{\tilde P , \bar \sigma}$.
  \end{claim}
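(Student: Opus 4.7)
The plan is to read off the explicit form of $\tilde u_{c,\infty}$ from formula \eqref{fibform} together with Claims C1 and C2, observe that it is a second-order differential operator involving only derivatives along $F_{\tilde P,\bar\sigma}\cap\bar F_{\tilde P,\bar\sigma}$, and then establish the two inclusions: the ``$\supseteq$'' direction follows by inspection, and the ``$\subseteq$'' direction will be established by an integration by parts argument on the compact leaves of $F_{\tilde P,\bar\sigma}\cap\bar F_{\tilde P,\bar\sigma}$.

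For the structural step, fix a local frame $(p_i)$ of $F_{\tilde P,\bar\sigma}\cap\bar F_{\tilde P,\bar\sigma}$ on the smooth locus $N_c'$ of $N_c$. By Claim~C1, $G_\infty\in C^\infty(N_c',S^2(F_{\tilde P,\bar\sigma}\cap\bar F_{\tilde P,\bar\sigma}))$, so $G_\infty$ pairs only with $\nabla_{p_i}$-derivatives. Combining the limit formula \eqref{lot} for $\Delta_G$ with the asymptotics $Z_t=Z_\infty/t+R(t)$, the leading symbol of $\tilde u_{c,\infty}$ is $2i(\bar Z_\infty^{-1})^{ij}$ in the $(p_i)$-frame. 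Going term by term through \eqref{fibform} and using Claim~C2, one checks that the $F_t$, $\dot F_t$ and tangential $dF_t$ contributions all vanish in the limit, and the curvature term likewise vanishes because of the factor $ZY^{-1}\bar Z$, which decays like $t^{-2}$. The net result is that $\tilde u_{c,\infty}$ has the form
\[
 \tilde u_{c,\infty}= \sum_{i,j}2i(\bar Z_\infty^{-1})^{ij}\nabla_{p_i}\nabla_{p_j} + \sum_i A^i\nabla_{p_i},
\]
for some smooth coefficients $A^i$ on $N_c'$.

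The ``$\supseteq$'' inclusion is then immediate: if $\nabla_{p_i}s=0$ for all $i$, then $\tilde u_{c,\infty}s=0$. For the ``$\subseteq$'' inclusion, suppose $\tilde u_{c,\infty}s=0$ on $N_c'$, and let $L$ be a leaf of $F_{\tilde P,\bar\sigma}\cap\bar F_{\tilde P,\bar\sigma}$ through a point of $N_c'$. By the description in Section~\ref{sect3}, $L$ is a fiber of $\pi_c$, of the form $Z_c/Z_{\bar A}$, and hence compact. Pairing the equation with $\bar s$ against a smooth volume form on $L$ and integrating, integration by parts (with vanishing boundary) produces a quadratic form in $\nabla_{p_i}s$ with leading coefficient $-2i(\bar Z_\infty^{-1})^{ij}$. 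Using the identity $W_\infty=-(X_\infty Y_\infty^{-1}X_\infty+Y_\infty)^{-1}$ derived in this section, together with $Y_\infty>0$, the real part of this coefficient is the positive definite matrix $2(X_\infty Y_\infty^{-1}X_\infty+Y_\infty)^{-1}$. Taking real parts of the integrated identity then forces
\[
 \int_L\sum_{i,j}2\bigl(X_\infty Y_\infty^{-1}X_\infty+Y_\infty\bigr)^{-1}_{ij}\,\overline{\nabla_{p_i}s}\cdot\nabla_{p_j}s\,dV_L \;=\; 0,
\]
and positive definiteness gives $\nabla_{p_i}s=0$ on $L$ for all $i$.

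The main technical obstacle is the bookkeeping in the integration by parts: the first-order coefficients $A^i$ and the curvature commutators $[\nabla_{p_i},\nabla_{p_j}]$ on $\mathcal{L}^k$ generate lower-order error terms that must either be absorbed into the positive definite quadratic form by a Cauchy--Schwarz argument or shown to contribute only to the imaginary part of the integrated identity. A judicious choice of a $G_\infty$-compatible volume form on $L$ simplifies this accounting, and extending the conclusion from $N_c'$ to all of $N_c$ is a standard density and distributional continuity argument.
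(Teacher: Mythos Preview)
Your overall strategy---integrate by parts on the compact leaves and use positivity---is the right one, and is in fact what the paper does. But you miss the structural observation that makes it a one-line argument, and your proposed workaround for the resulting error terms does not close the gap.

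The paper's proof simply observes that $\tilde u_{c,\infty}=\Delta_{G_\infty}$ is the Bochner (Laplace--Beltrami) operator of the connection $\nabla$ restricted to the leaves of $F_{\tilde P,\bar\sigma}\cap\bar F_{\tilde P,\bar\sigma}$, for the metric induced by $G_\infty$. Recall that by definition $\Delta_B=\nabla^2_B+\nabla_{\delta(B)}$; when $B$ is the inverse of a metric on the leaf and one uses the corresponding volume form, this is exactly $-\nabla^*\nabla$. Hence on a compact leaf $L$ one has $\int_L\langle\Delta_{G_\infty}s,s\rangle\,dV_{G_\infty}=-\int_L|\nabla s|^2_{G_\infty}\,dV_{G_\infty}$, and the kernel is precisely the covariantly constant sections. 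No error terms arise.

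By contrast, you write $\tilde u_{c,\infty}$ with explicit complex coefficients $2i(\bar Z_\infty^{-1})^{ij}$ and an unspecified first-order part $A^i\nabla_{p_i}$, then try to integrate by parts against an arbitrary volume form. This produces genuine lower-order terms, and your proposed Cauchy--Schwarz absorption cannot work: from an identity of the shape $\int|\nabla s|^2 + \int A\,\bar s\,\nabla s=0$ and a bound $|\int A\,\bar s\,\nabla s|\le C\|s\|\,\|\nabla s\|$ one cannot conclude $\nabla s=0$. The ``judicious choice of a $G_\infty$-compatible volume form'' you allude to is in fact the entire point: with that choice the first-order piece is exactly the divergence term $\nabla_{\delta(G_\infty)}$ needed to make the integration by parts exact, i.e.\ to exhibit $\Delta_{G_\infty}$ as $-\nabla^*\nabla$. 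You should state and use this identification rather than leave it as a vague simplification. (Incidentally, the curvature commutators $[\nabla_{p_i},\nabla_{p_j}]=-ik\,\omega(p_i,p_j)$ vanish since the $p_i$ span an isotropic subspace, so that worry is unfounded.)
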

  \begin{proof}
  We observe that $G_\infty$ induces a Hermitian structure on the leaves of $F_{\tilde P, \bar \sigma} \cap \bar F_{\tilde P , \bar \sigma} \cap TN'$ and that $\Delta_{G_\infty}$ is the corresponding Laplace--Beltrami operator associated to the restriction of $\nabla$ to the directions of $F_{\tilde P, \bar \sigma} \cap \bar F_{\tilde P, \bar \sigma} \cap TN'$. But then it follows immediately that the kernel of $\Delta_{G_\infty}$ are exactly the covariant constant sections of $\nabla$ along the directions of $F_{\tilde P , \bar \sigma} \cap \bar F_{\tilde P , \bar \sigma}$.
  \end{proof}
  Theorem \ref{prop1} now  follows directly from Claim \ref{C1}, \ref{C2} and \ref{C3} together with the above derived formulae.
\end{proof}

\begin{theorem}
  \label{thm11}
  In the cases (1)---(3) above and for $\tilde P$ any admissible system of curves on $\tilde \Sigma$, there exists a limiting linear map
  \begin{align}
		\label{eq12}
		P_\infty (\tilde \sigma_0 , \tilde P) : H_{\sigma_0}^{(k)} \to H_{\tilde P,\bar \sigma_0}^{(k)}.
  \end{align}
\end{theorem}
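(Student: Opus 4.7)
The plan is to reduce the statement to the fiber-wise analysis provided by Theorem \ref{prop1}. Given $s_0 \in H^{(k)}_{\tilde{\sigma}_0}$, write $s_t = P_t(\tilde{\sigma}_0,\tilde P)(s_0)$ for the parallel transport along the degeneration curve. By Theorem \ref{prop1}, for each $c \in h_{\tilde P}(N)$ the restriction $s_t|_{N_c}$ depends only on $s_0|_{N_c}$ and satisfies the ODE
\[
(s_t|_{N_c})' = \tilde{u}_{c,t}(s_t|_{N_c}),
\]
where $\tilde u_{c,t}$ is the explicit second order differential operator given by formula \eqref{fibform} and converges to $\tilde{u}_{c,\infty} = \Delta_{G_\infty}$ as $t \to \infty$. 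The key structural fact is that $\Delta_{G_\infty}$ is, up to the overall scaling $-\frac{1}{4k+2n}$ by an imaginary factor, the Laplace--Beltrami operator on the compact isotropic leaves of $F_{\tilde P,\bar \sigma_0}\cap \bar F_{\tilde P,\bar \sigma_0}$ inside $N_c$, whose kernel consists exactly of the covariant constant sections of $\L^k|_{N_c}$ along the fibers of $\pi_c$.

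The next step is to convert the existence of $\tilde u_{c,\infty}$ into convergence of the flow. I would decompose the restricted section $s_t|_{N_c}$ according to the spectral projection of $\tilde{u}_{c,\infty}$: the projection onto its kernel, and the projection onto its strictly positive part. Using the refined asymptotic expansion $Z_t = Z_\infty + Z_\infty' t^{-1} + R(t)$ derived in the proof of Theorem \ref{prop1} (which controls the rate at which $\tilde{u}_{c,t} \to \tilde{u}_{c,\infty}$ together with its derivative), a standard Duhamel/Gronwall argument for time-dependent operators shows that the component orthogonal to $\ker \tilde{u}_{c,\infty}$ is rapidly driven to zero, while the component in $\ker \tilde{u}_{c,\infty}$ stabilizes. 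When $c \notin B_k(\tilde{P})$, there is no nonzero section of $\L^k|_{N_c}$ which is covariant constant along all fibers of $\pi_c$, so the natural global representative of the limiting fiberwise kernel vanishes in the distributional sense; when $c \in B_k(\tilde{P})$, the limit produces a genuine covariant constant section of $\L^k|_{N_c}$ along the fibers of $\pi_c$, i.e.\ an element of $H^0(\bar N^c_{\bar \sigma_0}, \L_{c,k})$.

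I would then assemble these fiber-wise limits. Applying the factorization theorem from Section \ref{sect3}, the direct sum of the kernel projections as $c$ ranges over $B_k(\tilde P)$ is precisely $H_{\tilde P, \bar \sigma_0}^{(k)}$; this defines the candidate map $P_\infty(\tilde \sigma_0, \tilde P)(s_0)$. Linearity is automatic since the flow is linear in the initial condition, and continuity in $s_0$ follows because the fiber-wise convergence is uniform on the finite-dimensional space $H^{(k)}_{\tilde \sigma_0}$.

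The main obstacle will be the analytic step of controlling the long-time behavior of the flow generated by the time-dependent operator $\tilde{u}_{c,t}$. Unlike a pure heat flow, $\tilde{u}_{c,t}$ has an imaginary prefactor, so the naive spectral decay argument has to be replaced by a precise analysis exploiting the positivity of the Hermitian structure $G_\infty$ and the integrability of the remainder term $t^{-1}$ in the expansion of $Z_t$. A secondary obstacle is the presence of singular fibers in $h_{\tilde P}$ at the boundary of the image; here one must justify that the fiber-wise construction, carried out on the smooth locus, patches together to a well-defined distributional section on all of $N$, using the description of singular fibers via broken gauge transformations as reviewed in Section \ref{sect3}.
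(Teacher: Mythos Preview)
Your overall architecture matches the paper's: reduce to the fibre-wise ODE of Theorem~\ref{prop1}, use the convergence $\tilde u_{c,t}\to\tilde u_{c,\infty}=\Delta_{G_\infty}$, and identify the target with $H^{(k)}_{\tilde P,\bar\sigma_0}$ via Claim~\ref{C3}. The difference is entirely in the analytic core, and here your proposal has a real gap.

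You propose a spectral decomposition relative to $\tilde u_{c,\infty}$ and then argue that ``the component orthogonal to $\ker\tilde u_{c,\infty}$ is rapidly driven to zero.'' But, as you yourself note, the prefactor in $\tilde u_{c,\infty}$ is purely imaginary (see formula~\eqref{lot}), so the semigroup $e^{t\tilde u_{c,\infty}}$ is oscillatory, not contractive: the non-kernel component does not decay at all under the limiting flow. No Gronwall argument will manufacture decay out of a bounded unitary-type evolution. Moreover, you refer to ``integrability of the remainder term $t^{-1}$,'' but $t^{-1}$ is not integrable on $[t_0,\infty)$; the argument genuinely requires a rate $|\,\tilde u_{c,t}-\tilde u_{c,\infty}\,|\le ct^\alpha$ with $\alpha<-1$, which the paper extracts from the finer expansion of $Z_t$.

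The paper's route avoids any spectral-decay claim. Writing $E(t)$ for the propagator and $P(t)$ for the generator, it sets $Q(t)=e^{(t-t_0)P_\infty}E(t)$ and iterates the resulting Duhamel identity into a Dyson-type series
\[
E(t)=\sum_{n\ge 0}\int_{\Delta_n(t,t_0)} e^{-(t-s_0)P_\infty}(P_\infty-P(s_0))\cdots(P_\infty-P(s_{n-1}))e^{-(s_{n-1}-t_0)P_\infty}\,ds.
\]
Using only that $|e^{-tP_\infty}|=1$ (boundedness, not decay) together with the integrable bound $|P_\infty-P(s)|\le cs^\alpha$, $\alpha<-1$, one gets absolute convergence and a Cauchy estimate for $E(t)$ as $t\to\infty$. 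The same bounds force $E'(t)\to 0$, whence $P_\infty E(\infty)=0$, so the image of the limit lands in $\ker P_\infty$; combined with Claim~\ref{C3} this is exactly the statement that $P_\infty(\tilde\sigma_0,\tilde P)$ takes values in $H^{(k)}_{\tilde P,\bar\sigma_0}$. In short: replace your spectral-decay mechanism by the Dyson expansion with an integrable perturbation of a \emph{bounded} limiting evolution, and sharpen the rate from $t^{-1}$ to $t^\alpha$ with $\alpha<-1$.
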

\begin{proof}
 Assume $E(t)$ is a solution to
 $$E'(t) = -P(t)E(t)$$
 where $P(t) = [\tilde u_{c,t}, \cdot]$ and  $E(t_0) = \Id$, where $t_0$ is some starttime. We further let $P_\infty=P(\infty) =  [\tilde u_{c,\infty}, \cdot].$
  Let now $Q(t) = e^{(t-t_0)P_\infty} E(t)$. Then
  \begin{align*}
	Q'(t) = E^{(t-t_0)P_\infty}(P_\infty-P(t))E(t),
\end{align*}
  so 
  \begin{align*}
		Q(t) =&\, \Id + \int_{t_0}^t e^{(s_0-t_0)P_\infty}(P_\infty-P(s_0))E(s_0)\, ds_0, \\
		E(t) =&\, e^{-(t-t_0)P_\infty} + \int_{t_0}^t e^{-(t-s_0)P_\infty}(P_\infty-P(s_0))E(s_0) \, ds_0 \\
		     =&\, e^{-(t-t_0)P_\infty} + \int_{t_0}^t e^{-(t-s_0)P_\infty}(P_\infty-P(s_0))e^{-(s_0-t_0)P}\, ds_0 \\
		      &\,+ \int_{t_0}^t e^{-(t-s_0)P_\infty}(P_\infty-P(s_0))\int_{t_0}^{s_0}e^{-(s_0-s_1)P_\infty}(P_\infty-P(s_1))E(s_1)\, ds_1 \, ds_0.
  \end{align*}
  Iterating this construction we arrive at the following formula
  \begin{align}
	  \label{formelstjerne}
	  E(t) =&\, \sum_{n=0}^\infty \int_{\Delta_n(t,t_0)}e^{-(t-s_0)P_\infty}(P_\infty-P(s_0))e^{-(s_0-s_1)P_\infty}(P_\infty-P(s_1)) \\
	    \\&\cdots (P_\infty-P(s_{n-1}))e^{-(s_{n-1}-t_0)P_\infty}\, ds_{n-1} \, \dots \, ds_0 \nonumber,
  \end{align}
  where
  \begin{align*}
		\Delta_n(t,t_0) = \{ (s_0,\dots,s_{n-1}) \in \bbR^n \mid t_0 \leq s_{n-1} \leq s_{n-2} \leq \dots \leq s_0 \leq t \}.
  \end{align*}
  We need to justify the convergence of the series \eqref{formelstjerne}. First we observe that
  \begin{align*}
		\vol(\Delta_n(t,t_0)) = \frac{(t-t_0)^n}{n!}.
  \end{align*}
  From the above we have that
  \begin{align*}
		\abs{P_\infty-P(t)} \leq ct^\alpha
  \end{align*}
  for all $t \in [t_0,\infty)$, where $\alpha < -1$. 
  This allows us to show that \eqref{formelstjerne} is absolutely summable. For large enough $t_0$ we will get that $\abs{e^{-tP_\infty}} = 1$ for all $t \leq t_0$. So then
  \begin{align*}
		\abs{\int_{\Delta_n(t,t_0)}& e^{-(t-s_0)P_\infty}(P_\infty-P(s_0)) \cdots (P_\infty-P(s_{n-1}))e^{-(s_{n-1}-t)P_\infty}ds_{n-1} \dots ds_0} \\
		&\leq c^n \abs{\int_{\Delta_n(t,t_0)} s_0^\alpha \cdots s_{n-1}^\alpha \, ds_{n-1}\dots ds_0}\\
		&= \frac{c^n}{n!}\left(-\frac{t_0^{\alpha+1}}{\alpha+1}+\frac{t^{\alpha+1}}{\alpha+1}\right)^n.
  \end{align*}
  Hence, we see that \eqref{formelstjerne} is summable and
  \begin{align*}
		\abs{E(t)} \leq e^{-\frac{ct_0^{\alpha+1}}{\alpha+1} + \frac{ct^{\alpha+1}}{\alpha+1}}.
  \end{align*}
  Note that the estimate converges to $e^{-\frac{ct_0^{\alpha+1}}{\alpha+1}}$ as $t \to \infty$.
  
  Let us now show that $E(t)$ is a Cauchy sequence as $t \to \infty$. Let $t_1 > t_2 > t_0$. Then
  \begin{align*}
	  \abs{E(t_1) - E(t_2)} \leq&\, \abs{\sum_{n=0}^\infty \int_{\Delta_n(T_2,t_2)} (e^{-t_1P_\infty}-e^{-t_2P_\infty})e^{s_0P}(O_\infty-P(s_0)) \cdots} \\
	  &\,+ \abs{\sum_{n=0}^\infty \int_{\Delta_n(t_1,t_0)-\Delta_n(t_2,t_0)}e^{-(t_1-s_0)P_\infty}(P-P(s_0)) \cdots} \\
	  \leq& \, \abs{e^{-t_1P_\infty}-e^{-t_2P_\infty}} e^{-\frac{ct_0^{\alpha+1}}{\alpha+1}}e^{\frac{ct_2^{\alpha+1}}{\alpha+1}} \\
	  &\,+ \abs{e^{\frac{ct_2^{\alpha+1}}{\alpha+1}}-e^{\frac{ct_1^{\alpha+1}}{\alpha+1}}} e^{-\frac{ct_0^{\alpha+1}}{\alpha+1}},
  \end{align*}
  which can be made arbitrary small provided $t_1$ and $t_2$ are large enough giving the Cauchy condition. Hence $E(\infty)$ exists. Moreover, by dividing by $\abs{t_1-t_2}$ and letting $t_2 \to t_1$. We see that $\abs{E'(t)}$ can be made arbitrarily small, provided $t$ is large enough, hence $E'(t) \to 0$ as $t \to \infty$. But then we get that
  \begin{align*}
		P_\infty E(\infty) = 0,
  \end{align*}
  proving $\Im E(\infty) \subseteq \ker P_\infty$. It is clear that $E(t)$ defined this satisfies the required equation. The theorem now follows from Claim \ref{C3}.
\end{proof}

Suppose we now have $s_P \in H^{(k)}_P$. Then we get an induced linear functional on $H^{(k)}_{\sigma_t}$ given by
\begin{align*}
	s_P(s) = \sum_{b \in B_P^{(k)}} \int_{x \in h_P^{-1}(b)} \langle s(x), s_P(x) \rangle \Vol_{\sigma_{t},b}(x),
\end{align*}
where $\Vol_{\sigma_{t},b}$ is the volume form on $h_P^{-1}(b)$ induced by the metric on $N$ associated to $\sigma_t$. Now let $s_{P,\sigma_t} \in H_{\sigma_t}^{(k)}$ be the state associated to this functional,
\begin{align*}
	(s, s_{P,\sigma_t}) = S_P(s),
\end{align*}
for all $s \in H_{\sigma_t}^{(k)}$.
\begin{prop}
  We have the following asymptotics in Teichmüller space:
  \begin{align*}
		\lim_{t \to \infty} P_\infty(\sigma_t,P) (s_{P,\sigma_t}) = s_P.
  \end{align*}
\end{prop}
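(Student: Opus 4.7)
The plan is to reduce the assertion to showing that, inside $H^{(k)}_{\sigma_t}$, the Hermitian dual $s_{P,\sigma_t}$ converges to $P_\infty(\sigma_t, P)^{-1}(s_P)$ as $t\to\infty$, at which point applying $P_\infty(\sigma_t, P)$ to both sides yields the statement. I first use the cocycle relation
\[
 P_\infty(\sigma_t, P) \circ P_t(\sigma_0, P) = P_\infty(\sigma_0, P),
\]
which holds by uniqueness of solutions to the ODE defining the Hitchin parallel transport together with passage to the limit in Theorem~\ref{thm11}. In particular $P_\infty(\sigma_t, P)$ is an isomorphism for every $t$, so the claim is equivalent to
\[
 s_{P,\sigma_t} - P_\infty(\sigma_t, P)^{-1}(s_P) \longrightarrow 0 \quad \text{in } H^{(k)}_{\sigma_t}.
\]

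Next, I would characterise both sides through Hermitian pairings. By definition, $s_{P,\sigma_t}$ is determined by $(s, s_{P,\sigma_t})_{\sigma_t} = s_P(s)$ for every $s \in H^{(k)}_{\sigma_t}$. On the other hand, by Theorem~\ref{HS} and the asymptotic vanishing of $F_{\sigma_t}$ (Claim~\ref{C2}) together with Theorem~\ref{IP}, the pull-back of $(\cdot, \cdot)^{(k)}_P$ along $P_\infty(\sigma_t, P)$ agrees with $(\cdot, \cdot)_{\sigma_t}$ up to $o(1)$, so that
\[
 (s, P_\infty(\sigma_t, P)^{-1}(s_P))_{\sigma_t} = (P_\infty(\sigma_t, P)(s), s_P)^{(k)}_P + o(1).
\]
Equality of the two characterisations is then the localisation identity
\[
 s_P(s) = \sum_{b \in B_k(P)} \int_{h_P^{-1}(b)} \langle s, s_P \rangle \Vol_{\sigma_t, b} = (P_\infty(\sigma_t, P)(s), s_P)^{(k)}_P + o(1).
\]
Here the $\sigma_t$-dependent Gaussian profile of the holomorphic section $s$ transverse to each Bohr--Sommerfeld fibre $h_P^{-1}(b)$ is controlled by the $(22)$-block of the matrix $Z_t$ appearing in the proof of Theorem~\ref{prop1}; after rescaling by its width, the restriction of $s$ tends to the leafwise covariant-constant section $P_\infty(\sigma_t, P)(s)\vert_{h_P^{-1}(b)}$ by Claim~\ref{C3}, and the Gaussian integral against $\Vol_{\sigma_t, b}$ produces a finite constant that matches the pointwise pairing summed over $b \in B_k(P)$ defining $(\cdot,\cdot)^{(k)}_P$.

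Non-degeneracy of the Hermitian structure on $H^{(k)}_{\sigma_t}$ then forces $s_{P,\sigma_t} - P_\infty(\sigma_t, P)^{-1}(s_P) \to 0$. Applying $P_\infty(\sigma_t, P)$ and invoking the uniform operator-norm bound coming from the estimate $\abs{E(t)} \leq \exp(-c t_0^{\alpha+1}/(\alpha+1))$ established in the proof of Theorem~\ref{thm11} concludes the argument.

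The main obstacle is the localisation step: one has to match the $\sigma_t$-dependent fibrewise volume forms $\Vol_{\sigma_t, b}$ with the Gaussian widths coming from $Z_t^{(22)}$ precisely enough that the transverse contributions combine into the $t$-independent finite sum defining the distributional inner product $(\cdot,\cdot)^{(k)}_P$. Once this normalisation is pinned down using the expansion $Z_t = Z_\infty + Z_\infty' t^{-1} + R(t)$ together with the operator formula \eqref{fibform}, the remaining manipulations are formal identities for Hermitian structures combined with the cocycle property of parallel transport.
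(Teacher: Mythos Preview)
The paper's own proof is a single sentence: it invokes \cite{A11} together with the observation that the $t\to\infty$ degeneration of the complex structure is, after a local coordinate change, analytically equivalent to the large-$k$ limit treated there. No further argument is given.

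Your proposal is considerably more elaborate and attempts a self-contained route via Hermitian duality and a localisation identity. The overall architecture is sensible, but there are two issues worth flagging. First, the step you yourself identify as ``the main obstacle'' --- matching the Gaussian transverse profiles against the fibrewise volumes so that the integral collapses to the discrete sum defining $(\cdot,\cdot)^{(k)}_P$ --- is precisely the analytic content that the paper outsources to \cite{A11}; you sketch the mechanism but do not carry it out, so at that point your argument and the paper's are at the same level of incompleteness, only the paper is honest about importing the estimate wholesale. Second, your appeal to Theorem~\ref{IP} is misplaced: that theorem concerns the asymptotics of $G^{(k)}_\sigma$ as $k\to\infty$ for fixed $\sigma$, not as $\sigma_t$ degenerates for fixed $k$. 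To use it here you would need exactly the equivalence between the two limits that the paper's one-line proof asserts, so invoking Theorem~\ref{IP} does not buy you anything independent. There is also a mild logical-order concern: Corollary~\ref{Vor}, and hence Theorem~\ref{Miso}, are deduced \emph{from} this proposition, so you should check that Theorems~\ref{HS} and~\ref{IP} do not in turn rely on those results before citing them here.

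In short: your strategy is not wrong, but the hard analytic step is the same one the paper defers to \cite{A11}, and your detour through Theorems~\ref{HS} and~\ref{IP} does not circumvent it.
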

\begin{proof} This Theorem follows by the same qrguments as in \cite{A11}, since the effect of degenerating the complex structure is after a local coordinate change equivalent to the large $k$ limit considered in \cite{A11}.
\end{proof}
\begin{cor}\label{Vor}
  In the cases (1)---(3) above and for $\tilde P$ any admissible system of curves on $\tilde \Sigma$, the map \eqref{eq12} is an isomorphism.
\end{cor}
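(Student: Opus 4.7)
The strategy is to deduce the isomorphism from surjectivity together with a dimension count, using the preceding proposition as the main input for surjectivity.

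First, I would exploit the functorial behavior of $P_\infty$ under finite parallel transport. By the defining construction of $P_\infty$ as a $t \to \infty$ limit built on top of the Hitchin parallel transport $P_t(\tilde\sigma_0, \tilde P) : H^{(k)}_{\sigma_0} \to H^{(k)}_{\sigma_t}$, one has the compatibility
\[
    P_\infty(\tilde\sigma_0, \tilde P) = P_\infty(\sigma_t, \tilde P) \circ P_t(\tilde\sigma_0, \tilde P)
\]
for every finite $t \geq 0$. Since $P_t(\tilde\sigma_0, \tilde P)$ is an isomorphism, the image of $P_\infty(\tilde\sigma_0, \tilde P)$ agrees with the image of $P_\infty(\sigma_t, \tilde P)$ for every $t$.

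Second, I would apply the preceding proposition: given any $s_P \in H^{(k)}_{\tilde P, \bar\sigma_0}$, the states $P_\infty(\sigma_t, \tilde P)(s_{P,\sigma_t})$ converge to $s_P$ as $t \to \infty$. By the previous paragraph each of these lies in the common subspace $\Im P_\infty(\tilde\sigma_0, \tilde P) \subseteq H^{(k)}_{\tilde P, \bar\sigma_0}$. Since the target is finite-dimensional (a finite direct sum of spaces of holomorphic sections over compact projective varieties, by the factorization theorem), every linear subspace is closed, so the limit $s_P$ lies in $\Im P_\infty(\tilde\sigma_0, \tilde P)$. As $s_P$ was arbitrary, $P_\infty(\tilde\sigma_0, \tilde P)$ is surjective.

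Third, I would compare dimensions. By the factorization theorem combined with the Jeffrey--Weitsman identification $B_k(\tilde P) = L_k(\tilde P)$, and the fact that each summand $H^0(\bar N^c_{\bar\sigma}, \L_{c,k})$ is one-dimensional (the moduli spaces $\bar N^c$ being, up to the stabilizer action, single points whenever $c$ defines an admissible labelling, so the only holomorphic section data is the fiber of $\L_{c,k}$ at that point), one obtains $\dim H^{(k)}_{\tilde P, \bar\sigma_0} = |L_k(\tilde P)|$. On the other side, the Andersen--Ueno isomorphism $I_\Sigma$ together with the standard combinatorial description of the Reshetikhin--Turaev Hilbert space in terms of admissible labellings gives $\dim H^{(k)}_{\sigma_0} = |L_k(\tilde P)|$. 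Combined with surjectivity, this forces bijectivity.

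The main obstacle is the one-dimensionality of the summands $H^0(\bar N^c_{\bar\sigma}, \L_{c,k})$ in cases (2) and (3), where the parabolic structure at the marked points must be correctly accounted for in the holonomy constraint $c_0$ and in the definition of $\bar N^c$. Once this is verified uniformly in the three cases, the rest of the argument is formal.
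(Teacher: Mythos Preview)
Your overall architecture---surjectivity from the preceding Proposition plus a dimension count---is exactly the implicit argument the paper relies on (the paper gives no separate proof of the Corollary, simply stating it as a consequence of Theorem~\ref{thm11} and the Proposition). Steps 1 and 2 are fine: the compatibility $P_\infty(\tilde\sigma_0,\tilde P)=P_\infty(\sigma_t,\tilde P)\circ P_t(\tilde\sigma_0,\tilde P)$ follows from additivity of the cylinder-insertion path and the groupoid law for parallel transport, and the closedness argument for the image is correct in finite dimensions.

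The gap is in Step~3. Your claim that each summand $H^0(\bar N^c_{\bar\sigma},\L_{c,k})$ is one-dimensional, and your invocation of $B_k(\tilde P)=L_k(\tilde P)$, are only valid when $\tilde P$ is a \emph{full} pair of pants decomposition. The Corollary, however, is stated for an arbitrary admissible system $\tilde P$. In case~(1) with $g>1$, cutting along a proper sub-system leaves pieces of positive genus or with more than three boundary components, so $\bar N^c$ is positive-dimensional and the summands are genuine Verlinde spaces, not lines; moreover $L_k(\tilde P)$ as defined in the paper makes sense only for trivalent graphs. The remedy is to replace the one-dimensionality claim by the general factorization (sewing) equality for Verlinde dimensions,
\[
\dim H^{(k)}_{\sigma_0}=\sum_{c\in B_k(\tilde P)}\dim H^0(\bar N^c_{\bar\sigma},\L_{c,k}),
\]
which is the dimension statement underlying the factorization theorem and is what the paper alludes to in the introduction (the broken reference to Jeffrey--Weitsman). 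With that substitution your argument goes through for all admissible systems. In cases~(2) and~(3) your version happens to be correct as stated, since any admissible system there consists of a single curve and the cut pieces are three-holed spheres.
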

Theorem~\ref{thm11} and this Corollary \ref{Vor} implies Theorem~\ref{Miso}.

\section{The four punctured sphere case}
\label{4p}
Suppose $\Sigma$ is a $2$-sphere, and that $R$ consists of four points on $\Sigma$. Let $\tilde \Sigma = \Sigma - R$. Assume that we have a labeling $c : R \to [-2,2]$. Suppose we are given two transverse pair of pants decompositions $P_1$ and $P_2$ of $\tilde \Sigma$. Then $P_i = \{ \gamma_i \}$, where $\gamma_1$ and $\gamma_2$ are two transverse simple closed curves on $\tilde \Sigma$. We will use the notation $h_i = h_{P_i}$.

Choose an ordered subset $R'$ of $R$ of cardinality three. In this case we have the identity
\begin{align*}
	\tilde \calT \isom \bbC - \{0,1\}
\end{align*}
obtained as follows. For each $\tilde \sigma$, there is a unique $z \in \bbC - \{0,1\}$ and a unique biholomorphism from $(\Sigma_{\tilde \sigma},R)$ to $(\bbC P^1,\{0,1,\infty,z\})$ and which maps the ordered set $R'$ to the points $\{0,1,\infty\}$ on $\bbC P^1$.

In the following we determine the moduli space of flat
connections on a four
punctured sphere.

In stead of calculating the moduli spaces purely gauge theoretic we
will make heavy use of the identification of the moduli space of flat
connections on $\tilde\Sigma_{\tilde\s}$ with the character variety
$\calM(\tilde\Sigma_{\tilde\s}) = \Hom(\pi_1(\Sigma_{g,n}),\SU(2))/\SU(2)$. 

  There are many ways of calculating these moduli spaces. We could use
  the Morse theoretic approach as \cite{thaddeus}, or we could
  use pair of pants decomposition of $\tilde\Sigma_{\tilde\s}$ into two
  pair of pants glued along a circle, and calculate the fundamental
  group as an amalgamation of fundamental groups of two fundamental
  groups of a pair of pants. We will however calculate it by
  specifying specific curves, and use them to define coordinates in
  $\calM(\tilde\Sigma_{\tilde\s})$ by using trace. 
  
  Let $A,B,C,D$ be four curves on $\tilde\Sigma_{\tilde\s}$ each of which
  encircles a puncture. Then 
  \[
  \pi_1(\tilde\Sigma_{\tilde\s}) = \bracket{A,B,C,D
    \, | \, ABCD = 1}.
  \]We define seven coordinates on the moduli
  space, each for one of the trace of holonomies around the punctures $a = \Tr(\rho(A))$, $b = \Tr(\rho(B))$, $c = \Tr(\rho(C))$,
  $d = \Tr(\rho(D))$ and one for each of the belts dividing
  $\tilde\Sigma_{\tilde\s}$ into two pair of pants $x = \Tr(\rho(AB))$, $y =
  \Tr(\rho(BC))$ and a last for the diagonal $z = \Tr(\rho(AC))$,
  where $\rho$ is a $\SU(2)$-representation of
  $\pi_1(\tilde\Sigma_{\tilde\s})$. It can be shown (\cite{magnus}) that these
  functions satisfy the equation
  \begin{equation} \label{eq:trace-identity-2}
  x^2 + y^2 +z^2 + xyz = (ab+cd)x + (ad+bc)y + (ac+bd)z -
  (a^2+b^2+c^2+d^2 + abcd - 4).
  \end{equation}

  If the holonomies, $(\rho(A),\rho(B),\rho(C),\rho(D))$, around $A,B,C,D$ are fixed subject to $\rho(ABCD) =
  \Id$, the moduli space $N_{(\rho(A),\rho(B),\rho(C),\rho(D))}(\tilde\Sigma_{\tilde\s})$ is the
  zero-set of the polynomial \eqref{eq:trace-identity-2} in
  $[-2,2]^3$.
  For the permitted $(a,b,c,d) \in (-2,2)^4$ all moduli spaces
  are topologically spheres. In the six boundary cases 
  \[
  (a,b,c,d) \in \{
  (2,2,t,t), (2,t,t,2), (2,t,2,t), (t,t,2,2), (t,2,t,2),
  (t,2,2,t), t \in [-2,2]\},
  \] the moduli spaces are just points -- this
  corresponds to the case where two of the punctures has been filled
  in, and we consider the space of flat connections on a circle with
  specified holonomy $t \in [-2,2]$ --
  which is exactly a point.

\begin{remark}
  \label{rem:1}
  For a more detailed study of the moduli spaces mentioned in the
  above examples see e.g. \cite{Go1}.
\end{remark}

Let us now consider the moduli space of parabolic vector bundles on
$\tilde\Sigma_{\tilde\s}$. By the Mehta--Seshadri Theorem and the
calculations above this moduli space is generically a $2$-sphere.

Let $E \to \tilde\Sigma_{\tilde\s}$ be a stable parabolic vector
bundle of parabolic degree $0$ on $\tilde\Sigma_{\tilde\s}$ and let $L
\subset E$ be a proper subbundle. 

From above we have
\begin{align*}
  \pdeg L &= \deg L + \sum_{p \in R} w_1(p) \\
  &= \deg L + \sum_{\substack{p \in R \\ L_p = E_p^2}}s_p -
  \sum_{\substack{p \in R \\ L_p \neq E_p^2}}s_p \\
 &= \deg L + 2 \sum_{\substack{p \in R \\ L_p = E^2_p}}s_p - \sum_{p
   \in R} s_p
\end{align*}
For $E$ to be parabolically stable $\pdeg L < 0$ so we get the
following bound on the degree of $L$:
\[
\deg L = \pdeg L + \sum_{p \in R} s_p -2\sum_{\substack{p \in R \\ L_p
  = E_p^2}} \leq \sum_{p \in R}s_p.
\]
Since $\deg E = 0$ the Grothendieck classification of vector bundles
on $\bP^1$ give that $E \simeq \cO(k) \oplus \cO(-k)$ for an integer
$k \in \N$.

If $L= \cO(k)$ the restriction on degree gives $k \leq \sum_{p \in R}
s_p$. Now since there are four marked points and each of the $s_p$ are
less than $\frac{1}{2}$ we get that $k < 2$. Thus there are only two options 
\[
E \simeq \cO \oplus \cO \quad \text{or} \quad E \simeq \cO(1) \oplus \cO(-1).
\] 

Having analyzed this moduli space, we now turn to its quantization and the associated Hitchin connection. In particular, we will below identify 
the Hitchin connection explicitly with the TUY connection in the bundle of conformal blocks in this case of a four holed sphere. Hence let us first recall the sheaf of vacua construction from \cite{TUY}. 

Suppose $\lie g$ is a Lie algebra with a invariant inner product , which we will normalize such that
the longest root have length $\sqrt 2$. Let
$$B =  \bC -\{-1,0,1\}$$
and let $C= B \times \P^1$, which the canonical sections $s_i: B \ra C$, $i=1,2,3,4$ determined by 
$$s_1(\tau) = -1, \ s_2(\tau) = 0, \ s_3(\tau) = 1 \text{ and } s_4(\tau) = \tau,$$
for $\tau\in B$.
Let $\calF = (C,B, s_1,s_2,s_3,s_4)$ with the natural formal neighbourhoods induced from the canonical identification $\P^1 = \bC \cup \{\infty\}.$
Let 
\[
\hat {\lie g}(\calF) = \lie g \otimes _\bC H^0(C, \mathcal O_C(*\sum_{j=1}^N x_j))
\]
and recall from \cite{TUY} that the sheaf of conformal blocks over $B$ are given as follows
\[
	\V^\dagger_{\vec \lambda} (\calF )= \{\bra {\Psi}\in \mathcal O_B\otimes \cH^\dagger_{\vec 
	\lambda}\mid \bra {\Psi}\hat {\lie g} (\calF) =0\}
\]
where $\cH_{\lambda_i}$ is the heighest weight integrable $\hat {\lie g}$-module 
and
\[
	\cH^\dagger_{\lambda}= \cH^\dagger_{\lambda_1} \hat \otimes_\bC \dots 
	\hat \otimes_{\bC} \cH^\dagger_{\lambda_N}.
\]
As it is proved in \cite{TUY}, we get that the restriction map from 
$\cH_{\vec \lambda}$ to $\cH^{(0)}_{\vec \lambda}= V_\lambda$ induces an 
embedding of the sheaf of conformal block in genus $0$ into trivial $V_{\vec \lambda}^*$-bundle:
\[
 \V^\dagger_{\vec \lambda}(\calF) \hookrightarrow B \times ( V_{\vec \lambda}^*)^{\lie g}.
\]
Under this identification, the TUY-connection in the sheaf of conformal blocks gets identified with the KZ-connection in $B \times ( V_{\vec \lambda}^*)^{\lie g}$, which we now recall. Let $\Omega_{ij}$ is the quadratic Casimir acting in the $i$'th  and $j$'th 
factor.   Suppose that $(J_1, J_2, J_3)$ is an orthonormal basis of $\lie g$, 
then 
\[
 \Omega = \sum_{i=1}^3 J_i \otimes J_i
\]

So if $\rho_i : \SU(2)\to \Aut(V_{\lambda_i})$ and 
$\dot \rho_i : \lie g \to \End(V_{\lambda_i})$ are the representations of 
$\SU(2)$ and $\lie g$, and we  embed them into $\Aut(V_{\lambda_1}\otimes \dots \otimes V_{\lambda_4})$ and 
$\End(V_{\lambda_1}\otimes \dots \otimes V_{\lambda_4})$ in the usual way, then
\[
\Omega_{ij}= \dot \rho_i\otimes \dot \rho_j(\Omega).
\]
The KZ-connection is then given by
\[
	\nabla^\text{KZ}_{\frac \partial{\partial \tau}}= \nabla^t_{\frac 
	\partial{\partial \tau}} - \alpha(\frac\partial{\partial \tau}).
\]

where
\[
	\alpha(\frac\partial{\partial \tau}) = \frac {\Omega_{41}}{\tau} + 
	\frac{\Omega_{42}}{\tau-1}+ \frac{\Omega_{43}}{\tau+1}.
\]

We will now produce a geometric version of the KZ-connection.

The invariant inner product on the Lie algebra $\lie g$ induces a natural 
symplecitc structure on the coadjoint orbits. Let $\lie h\subseteq \lie g$ 
denote the Cartan subalgebra and denote by $X_\lambda$ the coadjoint 
orbit through $\lambda \in \lie h^*$.  If we use the right normalization of 
the inner product, we have that $X_\lambda$ is 
quantizable if and only if $\lambda$ is in the weight lattice.  Let $G=\SU(2)$ and assume that $\lambda$ is a dominant weight. We get a prequantum line bundle 
$\L_\lambda \to X_\lambda$, and the action of $\SU(2)$ lifts 
to this line bundle. Furthermore there exists a $\SU(2)$-invariant complex 
structure on $X_\lambda$.  It follows from the Bott--Borel--Weil Theorem that the representation of $\SU(2)$ on 
$H^0(X_\lambda, \L_\lambda)$  are the one determined by 
$\lambda$: 
\[
 V_\lambda \cong H^0(X_\lambda, \L_\lambda).
\]
The action of $\lie g$ on $V_\lambda$ can be described explicitly: we have an 
infinitesmal aciton of $\lie g$ on $X_\lambda$ given by
\[
	\lie g \to \mathcal X(X_\lambda) \quad \text{given by} \quad
        \xi \mapsto Z_\xi
\]
We then have that the action of $\lie g$ on $V_\lambda$ is described by
\[
	\xi(s) = \nabla_{x_\xi}s + 2\pi i \mu(\xi) s
\]
where $s\in H^0(X_\lambda, \L_\lambda)$ and $\mu(\xi)$ is the 
moment map evaluated on $\xi$. We remark that the action of $\lie g$ is given 
by  first order differential operators.  

Let us now consider the situation where we have four dominant weights
$\vec \lambda = (\lambda_1, \lambda_2, \lambda_3, \lambda_4)$, and consider 
the exterior tensor product
\[
	\L_{\vec \lambda} = p_1^*(\L_{\lambda_1}) \otimes  p_2^*(\L_{\lambda_2}) \otimes  p_3^*(\L_{\lambda_3}) \otimes  p_4^*(\L_{\lambda_4}) 
\]
which is a line bundle over 
\[
X=X_{\lambda_1}\times X_{\lambda_2}\times X_{\lambda_3}\times X_{\lambda_4}.
\]
  Thus we get a representation of $\SU(2)$  on
\[
 H^0(X,\L_{\vec \lambda}) \cong H^0(X_{\lambda_1}, \mathcal 
 L_{\lambda_1})\otimes H^0(X_{\lambda_2}, \mathcal 
 L_{\lambda_2})\otimes H^0(X_{\lambda_3}, \mathcal 
 L_{\lambda_3})\otimes H^0(X_{\lambda_4}, \mathcal 
 L_{\lambda_4}).
\]
We are interested in the invariant part
\[
	V_{\vec \lambda}^{G} = H^0(X,\L_{\bar \lambda})^{\SU(2)}.
\]
We can provide an alternative description of $V^G_{\vec \lambda}$ by 
applying the idea that quantization commutes with reduction:
Consider the moment map for the diagonal action
\[
	\mu : X_{\lambda_1}\times X_{\lambda_2} \times 
	X_{\lambda 3}\times X_{\lambda_4} \to \lie{g}^*
\]
given by
\[
\mu(\xi_1, \xi_2, \xi_3, \xi_4) = \sum_{i=1}^4 \xi_i.  
\]
Now we consider the symplectic reduction
\[
 \calM = \mu^{-1}(0)/\SU(2),
\]
which have an induced complex structure from $X$.  Furthermore there exists a 
unique line bundle $\L_\calM \to \calM$ s.t.  
\[
 p^*(\L_\calM) \cong \L_{\vec \lambda}|_{\mu^{-1}(0)}
\]
where $p: \mu^{-1}(0)\to \calM$ is the projection map.  

\begin{theorem}[Guillemin \& Sternberg] Quantization commutes with
  reduction, i.e.
	\[
		V^G \cong H^0(\calM,\L_\calM).
	\]
\end{theorem}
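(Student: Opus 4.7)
The plan is to construct an explicit restriction/descent map
\[
	r \colon H^0(X,\L_{\vec\lambda})^{\SU(2)} \to H^0(\calM,\L_\calM)
\]
and show that it is an isomorphism. Given an invariant section $s \in H^0(X,\L_{\vec\lambda})^{\SU(2)}$, the restriction $s|_{\mu^{-1}(0)}$ is an $\SU(2)$-invariant section of $\L_{\vec\lambda}|_{\mu^{-1}(0)} = p^*(\L_\calM)$, so it descends uniquely to a section $r(s)$ of $\L_\calM$ over $\calM$. That $r(s)$ is holomorphic follows from the fact that the complex structure on $\calM$ is by construction the one induced from $X$ via $p \colon \mu^{-1}(0) \to \calM$, together with the standard observation that the $\bar\partial$-operator on invariant sections intertwines with the induced $\bar\partial$-operator on $\calM$.

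To upgrade $r$ to an isomorphism, the standard strategy is to pass to the GIT picture. Complexify the action to $G_\bC = \SL(2,\bC)$ on $X$ and let $X^{ss}$ denote the semistable locus with respect to the $G_\bC$-linearization given by $\L_{\vec\lambda}$. By the Kempf--Ness theorem the inclusion $\mu^{-1}(0) \hookrightarrow X^{ss}$ induces a homeomorphism $\mu^{-1}(0)/G \cong X^{ss}//G_\bC$, and under our assumption that the weights are such that $0$ is a regular value of $\mu$, this is in fact a biholomorphism of K\"{a}hler manifolds with $\calM$. From this one gets the canonical identifications
\begin{align*}
	H^0(\calM,\L_\calM) &\cong H^0(X^{ss}, \L_{\vec\lambda})^{G_\bC} \\
	&\cong H^0(X,\L_{\vec\lambda})^{G_\bC} = H^0(X,\L_{\vec\lambda})^{\SU(2)},
\end{align*}
where the second equality uses that the complement of $X^{ss}$ has codimension at least two (by stability of a generic orbit in our coadjoint--orbit product), so sections extend by Hartogs, and the last equality uses that a holomorphic action of a complex reductive group has the same invariants as its maximal compact subgroup. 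Chasing through the definitions shows that the composite of these isomorphisms is exactly $r$.

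The main obstacles are twofold. First, one must verify that the setup in our four--punctured sphere case really satisfies the hypotheses needed: $0$ is a regular value of $\mu$ (equivalently, every point of $\mu^{-1}(0)$ has trivial stabilizer in $\SU(2)$), and $\calM$ is a smooth K\"{a}hler manifold. For generic weights $\vec\lambda$ this holds because the corresponding representations of $\pi_1$ are irreducible, matching the earlier discussion of the moduli space; at the non--generic loci one has to either restrict to the smooth stratum or work with sheaves on a singular quotient, and the earlier description of $M$ as a $2$--sphere tells us which cases are benign. Second, one must justify the codimension statement and the extension of sections from $X^{ss}$ to $X$; for products of coadjoint orbits this reduces to a direct check that the unstable locus consists of orbits whose image under $\mu$ is bounded away from zero, combined with standard Hartogs type extension. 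Once these two technical points are secured, the isomorphism $V^G \cong H^0(\calM,\L_\calM)$ follows as indicated.
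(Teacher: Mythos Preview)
The paper does not give a proof of this statement: it is quoted as the classical theorem of Guillemin and Sternberg and used as a black box in the construction of the geometric KZ-connection. So there is no ``paper's own proof'' to compare against; your proposal is effectively a sketch of an independent proof.

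Your outline follows the now-standard GIT/Kempf--Ness route rather than the original analytic argument of Guillemin and Sternberg (which proves injectivity via a maximum principle for $\lvert\mu\rvert^2$ and surjectivity by extending sections along the gradient flow of $\lvert\mu\rvert^2$). The GIT approach is perfectly legitimate, and in the case at hand $X=(\bP^1)^4$ with the diagonal $\SL(2,\bC)$-action, so everything can in principle be made explicit.

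There is, however, a genuine gap at the Hartogs step. You assert that the unstable locus $X\setminus X^{ss}$ has complex codimension at least two ``by stability of a generic orbit.'' Density of the stable locus only gives that the complement is a proper closed subvariety, not that it has codimension $\geq 2$; in general GIT situations the unstable locus can be a divisor, and then Hartogs fails. In the present example one can check directly that for generic weights $\vec\lambda$ the unstable configurations in $(\bP^1)^4$ are those with at least three coincident points, which indeed has codimension two, but this is a computation you must actually perform (and it can fail at special weights). Alternatively, you can bypass Hartogs entirely: the identification $H^0(X,\L_{\vec\lambda})^{G_\bC}\cong H^0(X/\!\!/G_\bC,\L_\calM)$ is essentially built into the construction of the GIT quotient as $\mathrm{Proj}$ of the invariant ring, so invoking that directly is cleaner than the extension argument. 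Either way, the justification you give for the codimension claim is not sufficient as written.
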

Now we consider the genus $0$ surface $\Sigma$ with $4$ marked points 
$x_1, \dots , x_4$.   We assume that we are provided with an identification 
$\Sigma \cong \P^1$, s.t.   $(x_1,x_2,x_3)$ are mapped to 
$(-1,0,1)$ and $x_4 $ to $\tau \in \P^1 - \{-1,0,1,\infty\}$. We 
assume that we have dominant weights $\lambda_1, \dots, \lambda_4$ attached to 
$x_1, \dots, x_4$.   The KZ-connection is defined as a connection in the 
trivial bundle
\[
	V_{\vec \lambda}^G = (V_{\lambda_1}\otimes \dots \otimes V_{\lambda_4})^G.  
\]
As stated above, the KZ-connection is described by the specific 1-form:
\[
	\nabla^\text{KZ}_{\frac \partial{\partial \tau}}= \nabla^t_{\frac 
	\partial{\partial \tau}} - \alpha(\frac\partial{\partial \tau})
\]
where
\[
	\alpha(\frac\partial{\partial \tau}) = \frac {\Omega_{41}}{\tau} + 
	\frac{\Omega_{42}}{\tau-1}+ \frac{\Omega_{43}}{\tau+1}
\]
From this we see that each of the operators $\Omega_{ij}$ become second order 
differential operators on $X$ acting on $\L_{\vec \lambda}$ such that 
they globally preserve $V_{\vec \lambda}^G$.   Let 
$u^\text{KZ}=u^{\text{KZ}}(\frac \partial {\partial \tau})$.   We now describe 
the resulting connection $\hat \nabla$  acting on the trivial 
$H^0(\calM,\L_\calM)$-bundle over $\P^1 - \{0,1,\infty\}$:
\[
\hat \nabla = \nabla^t-\hat u
\]
where $\hat u$ is a 1-form on $\P^1 - \{0,1,\infty\}$  with values 
in differential operators on $\calM$ acting on $\L_\calM$.   Explicitly we get 
a formula for $\hat u (\frac \partial{\partial z})$ by considering
\[
	X \supset \mu^{-1}(0) \to \calM
\]
and the splitting:
\[
	T_x\mu^{-1}(0) = T_x(Gx)\oplus (T_x(Gx))^\perp\cong T_x(Gx)\oplus p^*(T_x\calM).  
\]
of the tangent space of $\mu^{-1}(0)$ into a 3-dimensional and a 2-dimensional 
subspace.   Furthermore, we have that
\[
	T_x X = I(T_x(Gx))\oplus T_x\mu^{-1}(0)
\]
where $I$ is the complex structure on $X$.   On $G$-invariant section of 
$\L_{\vec \lambda}$ which are also holomorphic, i.e. $V^G_{\vec \lambda}$, 
we see that the derivatives in the direction of $T_x(Gx)$ and $I(T_x(Gx))$ 
vanishes, hence we can rewrite the action of $u^\text{KZ}$ as a second order 
differential operator which only differentiates in the direction of 
$(T(Gx))^\perp$.   Since we have $G$-invariance, we get this way an expression 
for $\hat u(\frac \partial{\partial z})$ as a second order differential operator.  

\begin{prop} The symbol of the second order differential operator
  $\hat{u}(\frac{\partial}{\partial z})$ is holomorphic, i.e.
	\[
		\sigma(\hat u(\frac\partial{\partial \tau} ))\in H^0(\calM,S^2(T))
	\]
\end{prop}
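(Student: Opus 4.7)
\medskip
\noindent\textbf{Proof plan.}
The plan is to compute the principal symbol of $u^{\text{KZ}}(\partial/\partial\tau)$ on $X$ first, observe that it is already holomorphic there as a section of $S^{2}(T^{1,0}X)$, and then check that the descent to $\calM$ via K\"{a}hler reduction preserves holomorphicity. The whole point is that $u^{\text{KZ}}$ is built out of the Casimirs $\Omega_{ab}$, whose symbols are universal objects attached to the $G$-action on each coadjoint orbit.

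First I would unwind the formula $\dot\rho_a(\xi)(s)=\nabla_{Z_\xi^{(a)}}s+2\pi i\mu(\xi)^{(a)}s$ and conclude that each $\Omega_{ab}=\sum_i\dot\rho_a(J_i)\dot\rho_b(J_i)$ is a second order differential operator on $X$ acting on $\calL_{\vec\lambda}$, with principal symbol
\[
\sigma(\Omega_{ab})=\sum_{i}Z^{(a)}_{J_{i}}\odot Z^{(b)}_{J_{i}}\in C^{\infty}(X,S^{2}(TX_{\bbC})).
\]
Since $\SU(2)$ acts on each coadjoint orbit $X_{\lambda_i}$ by K\"{a}hler automorphisms, every fundamental vector field $Z_\xi$ is a holomorphic Killing field, so its $(1,0)$-part $Z_\xi^{1,0}$ is a global holomorphic section of $T^{1,0}X_{\lambda_i}$. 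On holomorphic sections of $\calL_{\vec\lambda}$ the operator $\nabla_{Z_\xi}$ coincides with $\nabla_{Z_\xi^{1,0}}$ because $\nabla^{0,1}s=0$, hence the effective symbol of $\Omega_{ab}$ acting on holomorphic sections is
\[
\tilde\sigma(\Omega_{ab})=\sum_{i}(Z^{(a)}_{J_{i}})^{1,0}\odot(Z^{(b)}_{J_{i}})^{1,0}\in H^{0}(X,S^{2}(T^{1,0}X)).
\]
Plugging into the explicit formula $\alpha(\partial/\partial\tau)=\Omega_{41}/\tau+\Omega_{42}/(\tau-1)+\Omega_{43}/(\tau+1)$ shows that $\tilde\sigma(u^{\text{KZ}}(\partial/\partial\tau))$ is holomorphic on $X$ for every fixed $\tau$.

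Next I would descend to $\calM$. At any point $x\in\mu^{-1}(0)^{ss}$ one has the splittings
\[
T_{x}X=I(T_{x}(Gx))\oplus T_{x}\mu^{-1}(0),\qquad T_{x}\mu^{-1}(0)=T_{x}(Gx)\oplus(T_{x}(Gx))^{\perp},
\]
giving a complex identification $T^{1,0}_{[x]}\calM\cong T^{1,0}_{x}X/T^{1,0}_{x}(G_{\bbC}x)$ that equips $\calM$ with its K\"{a}hler quotient structure. On $G$-invariant holomorphic sections, derivatives in the directions $Z_\xi^{1,0}\in T^{1,0}(G_\bbC x)$ are algebraically determined by the moment map constraint and by $G$-invariance (they do not produce new second order directions on $\calM$), so the symbol of $\hat u(\partial/\partial\tau)$ at $[x]$ is obtained by applying the natural projection
\[
S^{2}(T^{1,0}_{x}X)\twoheadrightarrow S^{2}\bigl(T^{1,0}_{x}X/T^{1,0}_{x}(G_{\bbC}x)\bigr)\cong S^{2}(T^{1,0}_{[x]}\calM)
\]
to $\tilde\sigma(u^{\text{KZ}}(\partial/\partial\tau))(x)$. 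Since this projection is a $G$-equivariant holomorphic bundle map and $\tilde\sigma(u^{\text{KZ}}(\partial/\partial\tau))$ is a $G$-invariant holomorphic section of $S^{2}(T^{1,0}X)$ over $\mu^{-1}(0)^{ss}$, the resulting section on $\calM$ is a holomorphic section of $S^{2}(T^{1,0}\calM)$, proving the proposition.

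The main obstacle in this program is the third step: one must justify that, on $G$-invariant holomorphic sections, the seemingly second order derivatives along the complexified orbit $T^{1,0}(G_\bbC x)$ genuinely drop to lower order after reduction, so that the principal symbol really lives in $S^{2}(T^{1,0}\calM)$ and the identification with the K\"{a}hler structure on the quotient is the tautological one. Once this standard feature of K\"{a}hler reduction is in place, the holomorphicity of $\sigma(\hat u(\partial/\partial\tau))$ follows from the holomorphicity of $Z_\xi^{1,0}$ on each coadjoint orbit together with the fact that both the K\"{a}hler quotient and the projection $S^{2}(T^{1,0}X)\to S^{2}(T^{1,0}\calM)$ are holomorphic.
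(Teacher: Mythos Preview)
Your proposal is correct and follows essentially the same approach as the paper: show that the symbol of $u^{\text{KZ}}$ is a holomorphic section of $S^2(T^{1,0}X)$ upstairs, then descend by K\"{a}hler reduction. The paper's proof is a two-line sketch (``$u^{\text{KZ}}\in H^0(X,S^2(T))$, which then gives the stated result by reduction''), and you have filled in precisely the details it omits --- the identification of the symbol of $\Omega_{ab}$ with $\sum_i Z_{J_i}^{(a),1,0}\odot Z_{J_i}^{(b),1,0}$, the holomorphicity of the fundamental vector fields on coadjoint orbits, and the mechanism by which the $G$-equivariant holomorphic projection $S^2(T^{1,0}X)\to S^2(T^{1,0}\calM)$ carries holomorphic invariant sections to holomorphic sections on the quotient. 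The paper also records $S^2(T)\cong\mathcal O(4)$ on $\calM\cong\P^1$, but this is bookkeeping for the subsequent argument rather than an ingredient of the proof proper.
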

\begin{proof}
We observe that
\[
	S^2(T) \cong \mathcal O(4)
\]
under the identification of $\calM\cong \P^1$.   Next we observe that 
$u^{\text{KZ}}\in H^0(X,S^2(T))$ which then gives the stated result by reduction.
\end{proof}

We now compare this geometric version of the KZ-connection with the Hitchin connection.
Since $(\calM,\omega,I)$ is isomorphic to $\P^1$ as a complex manifold, we know 
there exists a smooth family of complex isomorphisms
\[
	\Phi_\tau :(\calM,I)\to \P^1
\]
varying smoothly with $\tau \in\T$. By 
comparing Chern-classes, we see that 
\[
	\Phi_\tau^*(\L_\calM) \cong \mathcal O(k_\calM)
\]
as holomorphic line bundls, for some $k_\calM \in \bZ$  independent of 
$\tau\in\T$.   From this we also get
\[
	G_\tau = \Phi^*_\tau\left(G\left(\frac \partial{\partial 
	\tau}\right)_\tau \right)\in H^0(\P^1, S^2(T \P^1))\cong H^0(\P^1,
	\mathcal O(4)).
\]
We observe that
\[
	S_0^2(H^0(\P^1,\mathcal O(2)))\cong H^0(\P^1,\mathcal O(4))
\]
as representations of $\SL(2,\bC)$.   Here we think of 
$S^2(H^0(\P^1,\mathcal O(2)))$ as quadratic forms on $H^0(\P^1, \mathcal O(2))$ 
and $S^2_0$ mean trace zero such.  

\begin{theorem}
\label{}
There exists
\[
	\Psi : \T \to \SL(2,\bC),
\]
such that if we define $\tilde \Phi_\tau = \Psi^{(\tau)}\circ
\Phi_\tau$ and let
\[
\tilde G_\tau = \tilde \Phi^*_\tau (G(\frac \partial{\partial \tau} )_\tau)\in 
H^0(\P^1,\mathcal O(4))
\]
then
\[
	\tilde G_{\tau}=\sigma(\hat \mu(\frac\partial{\partial \tau})).
\]
\end{theorem}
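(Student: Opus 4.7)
The plan is to identify both sides as representatives of the same Kodaira--Spencer class of the family $(\calM, I_\tau)$, viewed inside $H^0(\P^1, \calO(4))$, and to absorb the resulting $\SL(2,\bC)$-ambiguity into the choice of $\Psi$. The identification $\Phi_\tau : (\calM, I_\tau) \to \P^1$ is unique only up to post-composition by $\Aut(\P^1) = \PSL(2,\bC)$, so the pulled-back tensor $G_\tau = \Phi^*_\tau G(\partial/\partial\tau)_\tau$ is canonical only modulo the natural $\SL(2,\bC)$-action on the $5$-dimensional irreducible representation $H^0(\P^1, \calO(4)) \cong S^4(\bC^2)$. The theorem asks for a specific $\Psi(\tau)$ that brings $G_\tau$ onto $\sigma(\hat u(\partial/\partial\tau))$ on the nose.

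The core step is to show that both $\tilde G_\tau$ and $\sigma(\hat u(\partial/\partial\tau))$ represent the same intrinsic object on $\calM$, namely the Kodaira--Spencer tensor for the variation of complex structure as $\tau$ moves. For the Hitchin side, the identity $\tilde G(V) = -V[g^{-1}]$ from Section~\ref{sect2} identifies $G(\partial/\partial\tau)_\tau$ with the Kodaira--Spencer representative paired with the inverse metric, an element of $H^0(\calM, S^2(T_\tau))$. For the KZ side, each Casimir $\Omega_{ij}$ is the quantization of a quadratic Hamiltonian on $X$, whose symbol descends through $\mu^{-1}(0) \to \calM$ to a quadratic tensor on $\calM$; the specific combination $\alpha(\partial/\partial\tau) = \Omega_{41}/\tau + \Omega_{42}/(\tau - 1) + \Omega_{43}/(\tau + 1)$ then assembles, after reduction, into the Kodaira--Spencer variation induced by moving the fourth marked point $\tau$ on $\P^1$. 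The Laszlo isomorphism (together with the Andersen--Ueno extension used throughout the section) matches the Hitchin connection on $H^{(k)}$ with the geometric KZ connection $\hat\nabla$ on $H^0(\calM, \L_\calM)$ projectively. Writing both connections as $\nabla^t - (\cdot)$, the difference $u(\partial/\partial\tau) - \hat u(\partial/\partial\tau)$ is therefore a scalar operator, so their principal symbols must coincide as sections of $S^2(T\calM)$.

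With both symbols agreeing intrinsically on $\calM$, it remains to produce $\Psi : \T \to \SL(2,\bC)$ such that $\tilde\Phi_\tau = \Psi(\tau) \circ \Phi_\tau$ identifies them as elements of $H^0(\P^1, \calO(4))$. Pointwise existence follows from the previous paragraph. Holomorphic dependence on $\tau$ follows because both quartics depend holomorphically on $\tau$, the $\SL(2,\bC)$-action on $H^0(\P^1, \calO(4))$ is algebraic, and a generic degree-$4$ binary form has finite stabilizer, so local holomorphic lifts exist and glue consistently. The main obstacle is the middle paragraph: verifying precisely that the reduction of the Casimir symbols from $X$ to $\calM$ reproduces the same Kodaira--Spencer tensor as $G(\partial/\partial\tau)_\tau$. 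This requires unwinding the Mehta--Seshadri identification of the parabolic moduli space with $\P^1$, the splitting $T_x X = I(T_x(Gx)) \oplus T_x\mu^{-1}(0)$ used to rewrite $u^{\text{KZ}}$ as a second-order operator on $\calM$, and a direct comparison of the two resulting quartic forms on $\P^1$ up to the $\SL(2,\bC)$-action.
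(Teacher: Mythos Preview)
Your approach has a genuine circularity problem. You invoke the Laszlo isomorphism (and its Andersen--Ueno extension) to conclude that the Hitchin and geometric KZ connections agree projectively, and from this deduce that their principal symbols coincide on $\calM$. But in the paper this theorem is precisely the key step \emph{toward} establishing that identification in the four-punctured sphere setting: immediately after the theorem, the paper uses the fact that the symbols have been aligned to conclude that $\tilde\Phi$ carries the Hitchin connection to the KZ connection. So you are assuming the conclusion you are meant to help prove. Moreover, even granting a projective identification of connections, that identification is mediated by some bundle isomorphism; you would still need to know that this isomorphism is induced by a biholomorphism of $\calM$ of the form $\Psi(\tau)\circ\Phi_\tau$ in order to compare the symbols as sections of $\calO(4)$ on $\P^1$, which is again what is at issue.

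The paper's argument is entirely different and self-contained. It uses the isomorphism $H^0(\P^1,\calO(4))\cong S_0^2(H^0(\P^1,\calO(2)))$ of $\SL(2,\bC)$-modules to view both $G_\tau$ and $\sigma(\hat u(\partial/\partial\tau))$ as traceless symmetric $3\times 3$ complex matrices on which $\SL(2,\bC)$ acts by conjugation (via the double cover $\SL(2,\bC)\to\SO(3,\bC)$). Two such matrices lie in the same orbit if and only if they have the same eigenvalues. The paper then asserts, as the result of an explicit computation, that $G_\tau$ and $\sigma(\hat u(\partial/\partial\tau))$ do have the same eigenvalues for every $\tau$, which yields $\Psi(\tau)$ pointwise. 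This route avoids any appeal to the Hitchin--TUY comparison. Your final paragraph correctly identifies that the substantive work lies in comparing the two quartic forms up to the $\SL(2,\bC)$-action; the paper's method for doing this is the eigenvalue check, not a Kodaira--Spencer identification (indeed, since $(\calM,I_\tau)\cong\P^1$ for all $\tau$, the Kodaira--Spencer \emph{class} vanishes, so that language is at best heuristic here).
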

\begin{proof}
	We consider $S_0^2(H^0(\P^1,\mathcal O(2)))$ as a representation of 
	$\SL(2,\bC)$, where we think of $S_0^2(H^0(\P^1,\mathcal O(2)))$ as 
	quadratic forms on $H^0(\P^1,\mathcal O(2))=H^0(\P^1,T \P^1)$, hence 
	we consider elements of $S_0^2(H^0(\P^1,\mathcal O(2)))$ as symmetric 
	symmetric traceless $3\times 3$ complex matrices on which $\SL(2,\bC)$ 
	acts by conjugation.   We have that two symmetric traceless $3\times 3$ 
	complex matrices are conjugate if and only if they have the same eigenvalues.   
	An explicit computation shows that $\tilde G_\tau$ and 
	$\sigma(\hat u(\frac\partial{\partial \tau} ))$ has the same 
	eigenvalues, hence we can find the required map $\Psi$.  
\end{proof}

Since $\tilde \Phi$ is such that the two symbols of the two second order differential operators defining the Hitchin connection and the geometric KZ-connection have been aligned, it follows from the form the Hitchin connection has, in order to preserve the subbundle of holomorphic sections that $\tilde \Phi$ must take the Hitchin connection to the KZ-connection. We further see that the Bohr-Sommerfeld decomposition corresponding to the limiting real polarizations, when $\tau$ approaches $-1$ and $1$, corresponds to the factorization decomposition for the covariant constant sections of the sheaf of vacua constructed in \cite{TUY}.
\begin{theorem}
  \label{thm12}
  If $P_1$ and $P_2$ are pair of pants decompositions related by an elementary flip on a four-punctured sphere, then $[\cdot,\cdot]_{P_1,\sigma_0}$ and $[\cdot,\cdot]_{P_1,\sigma_0}$ are projectively equivalent.
\end{theorem}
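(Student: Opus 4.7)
The strategy is to leverage the identification of the Hitchin connection with the KZ/TUY connection, established in the pages immediately preceding this theorem, in order to reduce the statement to the unitarity of the change-of-basis matrix (the F-matrix, or quantum $6j$-symbol) between the two channel bases of the four-punctured conformal block space.

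First, by Theorem~\ref{HS}, each Hermitian structure $[\cdot,\cdot]_{P_i,\sigma_0}^{(k)}$ is projectively preserved by the Hitchin connection over $\tilde{\mathcal{T}}$. Thus projective equivalence at $\sigma_0$ is equivalent to projective equivalence at any single other point of $\tilde{\mathcal{T}}$, and may be checked in a convenient limit or after transport to any convenient $\sigma$. In particular, checking that the ratio of the two forms is projectively covariantly constant reduces the claim to a comparison in the large-$t$ regime where the complex structure degenerates to one of the real polarizations $F_{P_i}$.

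Next, invoking the Andersen--Ueno identification of $\P\mathcal{V}^{(k)}(\Sigma)$ with the Reshetikhin--Turaev space, together with the explicit match of Hitchin$=$KZ$=$TUY just established for the four-punctured sphere, the parallel transport $P_\infty(\sigma_0,P_i)$ identifies projectively the $\sigma_0$-quantization with the conformal block space, sending a suitable Hitchin-parallel frame to the channel factorization basis of $V^{\SU(2)}_{\vec{\lambda}}$ corresponding to the pants curve of $P_i$. Under this identification the Hermitian form $(\cdot,\cdot)^{(k)}_{P_i}$ is, by the BHMV formula (\ref{eq:normofbasis}) and the definition after it, diagonal in the $P_i$-factorization basis with explicit positive weights. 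The relation between the two factorization bases, as $P_1$ and $P_2$ differ by an elementary flip, is given by a single F-matrix $F_{P_1P_2}$.

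The key input is then that $F_{P_1P_2}$ is unitary with respect to the weighted Hermitian structures above. This is a standard structural property of the modular tensor category coming from $U_q(\mathfrak{sl}_2)$ at $q=e^{2\pi i/(k+2)}$, reflected in the BHMV/skein model; after applying the isomorphism of modular functors from \cite{AU4}, it is exactly this unitarity that translates into the equality of the two weighted Hermitian forms up to a positive scalar. Transporting back through $P_\infty(\sigma_0,P_i)^{-1}$ for $i=1,2$, one obtains that $[\cdot,\cdot]_{P_1,\sigma_0}^{(k)}$ and $[\cdot,\cdot]_{P_2,\sigma_0}^{(k)}$ differ by a positive scalar, which is the claimed projective equivalence.

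The main obstacle will be the careful bookkeeping of projective ambiguities: each of $P_\infty(\sigma_0, P_i)$, the Hitchin$=$KZ identification, and the modular functor identification of \cite{AU4} is defined only up to a scalar, and one needs to ensure that the unitarity of $F$ absorbs these scalars cleanly so that no residual non-constant factor remains and the conclusion is genuinely \emph{projective} equivalence of Hermitian forms on $H^{(k)}_{\sigma_0}$, rather than some weaker statement.
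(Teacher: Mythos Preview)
Your proposal is correct and follows essentially the same route as the paper: identify the Hitchin parallel transport on the four-punctured sphere with the KZ/TUY connection (done just above Theorem~\ref{thm12}), invoke the Andersen--Ueno equivalence with the Reshetikhin--Turaev modular functor \cite{AU1,AU2,AU3,AU4}, and then use that the elementary flip is an isometry in the RT theory --- which is exactly your ``unitarity of the $F$-matrix'' input. The paper states this in one sentence; you have unpacked the same argument in more detail, including the explicit role of the factorization bases and the BHMV weights.

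One small caution on logical ordering: you cite Theorem~\ref{HS} to justify that each $[\cdot,\cdot]_{P_i,\sigma_0}^{(k)}$ is projectively preserved by the Hitchin connection, but in the paper Theorem~\ref{HS} is proved in Section~\ref{HSHV} \emph{after} Theorem~\ref{thm12}, and its mapping-class-group invariance half (via Theorem~\ref{thm15}) actually depends on Theorem~\ref{thm12}. You only use the ``projectively preserved by the Hitchin connection'' half, which is Theorem~\ref{thm14} and is logically independent of Theorem~\ref{thm12} (it follows directly from the definition of $[\cdot,\cdot]_P^{(k)}$ via $P_\infty$ and projective flatness). So there is no genuine circularity, but in a write-up you should cite Theorem~\ref{thm14} or the defining property directly rather than Theorem~\ref{HS}.
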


\proof
The projective equivalence is obtained by the tensor product of the parallel transport discussed above on the four-punctured sphere in question with the identity on the complementary part in the factorization. The fact that this is a projective equivalence follows from the above arguments identifying the parallel transport of the Hitchin connection with the KZ-connection, which by the results of \cite{AU1,AU2,AU3,AU4} know is an isometry, since the corresponding flip transformation in the Reshetikhin-Turaev TQFT is an isometry.

\eproof

\section{The once punctured genus one case}\label{elliptic}

Consider the specific case of a torus with a single puncture,
$\tilde\Sigma_{\tilde\s}$ (in the notation above). Let $N_{c_0}$ be the
moduli space of flat connections on $\tilde\Sigma_{\tilde\s}$ with $c_0
\in [-2,2]$ the holonomy around the puncture. The generators of the fundamental group are the curves $a,b,c$ being the longitude, meridian and a small curve around the puncture. The fundamental group of $\tilde\Sigma_{\tilde\s}$ is $\pi_1(\tilde\Sigma_{\tilde\s}) = \bracket{a,b,c \, | \, aba^{-1}b^{-1} = c}$. 

Let $\rho: \pi_1(\tilde\Sigma_{\tilde\s}) \to \SU(2)$ be a
$\SU(2)$-representation of $\pi_1(\tilde\Sigma_{\tilde\s})$. Define $A =
\rho(a)$, $B = \rho(b)$ and $C = \rho(c)$. We describe the moduli space by determining each of the fibers of the trace $\Tr: N \to [-2,2]$.

The case where $C$ corresponds to minus the identity (i.e. $\Tr(C)
= -2$) is the same as removing the puncture. Now since $a,b$ commute in
$\pi_1(\tilde\Sigma_{\tilde\s})$ we have $AB = BA$. Every element of
$\SU(2)$ can be diagonalized, so as $\SU(2)$ acts on the
representation variety by diagonal conjugation we assume $A$ to
be diagonal. Assume also that $A$ has distinct eigenvalues. Then
the only element $B$ that commutes with $A$ are diagonal
matrices. Hence $A$ and $B$ can be simultaneously
diagonalised to be elements of $S^1$. We can however still conjugate
$A$ and $B$ by elements of the Weyl group and still stay
within $S^1 \subset \SU(2)$ (this amounts to changing the order of the
eigenvalues), so $N_1(\tilde\Sigma_{\tilde\s}) = S^1 \times S^1 / \Z_2$. In
the case of $A$ or $B$ not having two distinct eigenvalues the above
description is still valid; generally however these non-generic cases
correspond to singular points of the moduli space.

Let $a,b,c$ be curves as above. The trace provides coordinates on the
moduli space, so let $\rho$ be a $\SU(2)$-representation of the
fundamental group, and define $x = \Tr(\rho(a))$, $y = \Tr(\rho(b))$
and $z = \Tr(\rho(ab))$. The moduli space is a subset of $[-2,2]^3$
carved out by the relation from the presentation of the fundamental group. Now fix the holonomy around $c$ to be $C \in
\SU(2)$. By the relation $ABA^{-1}B^{-1} = C$, and it is a simple
check that the following identity is satisfied for any $A,B \in \SU(2)$:
\begin{equation} \label{eq:trace-identity}
\Tr(ABA^{-1}B^{-1}) = \Tr(A)^2 + \Tr(B)^2 + \Tr(AB)^2 - \Tr(A)\Tr(B)\Tr(AB)-2.
\end{equation}
In other words the moduli space with fixed holonomy around $c$ is
\[
N_{c_0}(\tilde\Sigma_{\tilde\s}) = \{(x,y,z) \in [-2,2]^3 \, | \, x^2 + y^2 + z^2
- xyz - 2 = c_0\},
\]
which is topologically a sphere, for all values of $c_0 \in (-2,2]$.

We expect that we can find an argument completely parallel to the one given above in the genus zero case, since the moduli space is again a sphere. However we do not strictly need this, since by \cite{AU3}, we know that the genus zero part of a modular functor determines $S$-matrix, which is the need equivalence in this case. By the result of the previous section, we know that the quantization of the moduli spaces of does indeed give a modular functor which in genus zero is isomorphic to the one constructed in \cite{AU2} for the Lie algebra of $SU(2)$. Hence we have the following theorem.

\begin{theorem}
  \label{thm13}
  If $P_1$  and $P_2$ are pair of pants decompositions related by an elementary flip on a once punctured torus, then $[\cdot,\cdot]_{P_1,\sigma_0}$ and $[\cdot,\cdot]_{P_2,\sigma_0}$ are projectively equivalent.
\end{theorem}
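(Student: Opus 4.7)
The plan is to reduce this to the four-punctured sphere case (Theorem 12) via the fact, established in \cite{AU3}, that the genus zero data of a modular functor determines the $S$-matrix. Since an elementary flip between pair of pants decompositions on the once-punctured torus corresponds precisely to an $S$-type move in the modular functor, it suffices to know that the genus zero quantum/geometric structures agree projectively and are compatible with the unitary structure.

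First I would verify that the geometric setup of the previous sections applies in this case. By the analysis of $N_{c_0}(\tilde\Sigma_{\tilde\s})$ as the zero set of the trace identity in $[-2,2]^3$, for generic $c_0$ we have a symplectic manifold (topologically $S^2$) carrying a family of K\"ahler structures parametrized by $\tilde\T$, so Theorem \ref{HCE} (or Remark \ref{rem2} in the boundary case) gives a projectively flat Hitchin connection on $H^{(k)}$ over $\tilde\T$. By Theorem \ref{Miso} applied to the two admissible pairs of pants decompositions $P_1, P_2$, the parallel transport maps $P_\infty(\tilde\sigma_0, P_i)$ are isomorphisms onto $H^{(k)}_{P_i,\bar\sigma_0}$, so the Hermitian structures $[\cdot,\cdot]^{(k)}_{P_i,\sigma_0}$ are well-defined on $H^{(k)}_{\sigma_0}$.

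Next I would transport everything to the Reshetikhin--Turaev side using the isomorphism $I_\Sigma : PZ^{(k)}(\Sigma) \to P\mathcal V^{(k)}(\Sigma)$ from the Andersen--Ueno theorem, together with the factorization isomorphism (the genus zero four-punctured sphere factorization) which is precisely what Theorem \ref{thm12} controls. By the main theorem of \cite{AU3}, a modular functor is determined by its genus zero data: in particular, the transformation associated to an elementary flip on the once-punctured torus is a composition/limit of genus zero fusion data and is unitary in the Reshetikhin--Turaev theory. Since Theorem \ref{thm12} establishes that the Hitchin-geometric and TUY/KZ structures are projectively isomorphic as Hermitian structures on every four-punctured sphere, the $S$-type transformation assembled from these building blocks must likewise be a projective isometry when transported to $H^{(k)}_{\sigma_0}$, giving the desired projective equivalence of $[\cdot,\cdot]^{(k)}_{P_1,\sigma_0}$ and $[\cdot,\cdot]^{(k)}_{P_2,\sigma_0}$.

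The main obstacle is conceptual rather than computational: one must be sure that the projective identification of quantizations supplied by Section \ref{4p} is genuinely compatible with the gluing/factorization used in \cite{AU1,AU2,AU3,AU4} to build the modular functor, so that the abstract ``genus-zero-determines-$S$'' result of \cite{AU3} can be invoked. A fully direct alternative would mimic Section \ref{4p}: identify the Hitchin connection on the genus one moduli space $N_{c_0}$ with a geometric KZ-type connection (exploiting the $S^2$ description and the relevant diagonal action), compute the symbol of the reduced second-order operator, align it via an $\SL(2,\bC)$-valued map as in the four-punctured case, and then quote the isometry property of the corresponding flip transformation in the Reshetikhin--Turaev TQFT. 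This direct route would be substantially more laborious, but the modular-functor argument above short-circuits it entirely.
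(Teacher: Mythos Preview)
Your proposal is correct and follows essentially the same approach as the paper: both invoke the result from \cite{AU3} that the genus zero data of a modular functor determines the $S$-matrix, use the identification of the geometric/Hitchin side with the TUY/KZ side on four-punctured spheres established in Section~\ref{4p} (Theorem~\ref{thm12}), and conclude that the $S$-type flip on the once-punctured torus is a projective isometry. The paper even remarks, as you do, that a direct argument paralleling the four-punctured sphere computation ought to exist but is not needed given the modular-functor shortcut.
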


\section{The Hermitian structure and and the Handlebody vectors}
\label{HSHV}

\label{sect7}
We recall the setting from the introduction, where $\Sigma$ is a closed oriented surface of genus $g > 1$ and $P$ is a pair of pants decomposition of $\Sigma$. Recalling the map \eqref{eq1}, we define the representative $[\cdot,\cdot]_P^{(k)}$ of $(\cdot,\cdot)^{(k)}$ determined by $P$ by the formula
\begin{align*}
	[s_1,s_2]^{(k)}_{P,\sigma_0} = (P_\infty(\sigma_0,P)(s_1),P_\infty(\sigma_0,P)(s_2))_P^{(k)},
\end{align*}
for all $s_1,s_2 \in H^{(k)}_{\sigma_0}$.
\begin{theorem}
  \label{thm14}
  The Hermitian structure $[\cdot,\cdot]^{(k)}_P$ is projectively preserved by the Hitchin connection.
\end{theorem}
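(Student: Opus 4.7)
The plan is to exploit that $P_\infty(\sigma,P)$ is itself constructed as a limit of Hitchin parallel transport along a degenerating family in Teichm\"uller space, together with the projective flatness of the Hitchin connection (Theorem~\ref{Pflat}), in order to compare the map $P_\infty$ at different basepoints. Given a smooth curve $\sigma_s$, $s\in [0,1]$, in $\T$ with Hitchin parallel transport $P_s^{\mathrm{H}}:H^{(k)}_{\sigma_0}\to H^{(k)}_{\sigma_s}$, unfolding the definition of $[\cdot,\cdot]^{(k)}_{P,\sigma_s}$ gives
\[
[P_s^{\mathrm{H}}s_1,P_s^{\mathrm{H}}s_2]^{(k)}_{P,\sigma_s} = (P_\infty(\sigma_s,P)\circ P_s^{\mathrm{H}}(s_1),\, P_\infty(\sigma_s,P)\circ P_s^{\mathrm{H}}(s_2))_P^{(k)}.
\]
Thus the theorem reduces to the claim that
\[
P_\infty(\sigma_s,P)\circ P_s^{\mathrm{H}} = \lambda(s)\cdot P_\infty(\sigma_0,P)
\]
for some function $\lambda(s)\in\bC^*$ depending on the path but not on the section, in which case $|\lambda(s)|^2$ is the common scale factor.

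To produce such a $\lambda$, for each large $T>0$ I would consider two piecewise smooth paths in $\T$ with the same endpoints $\sigma_0$ and $\sigma^{(s)}_T$, where $\sigma^{(s)}_T$ denotes the complex structure obtained from $\sigma_s$ by inserting cylinders of length $T$ along the curves of $P$. The first path goes $\sigma_0\leadsto\sigma_s\leadsto\sigma^{(s)}_T$, i.e.\ the given curve followed by the degeneration from $\sigma_s$; the second goes $\sigma_0 \leadsto \sigma^{(0)}_T \leadsto \sigma^{(s)}_T$, i.e.\ the degeneration starting at $\sigma_0$ concatenated with a connecting piece within the deep end of $\T$ towards $\sigma^{(s)}_T$. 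Since $\T$ is contractible, the projective flatness of the Hitchin connection implies that the two Hitchin parallel transports along these paths agree up to a phase $\lambda_T\in U(1)$. By Theorem~\ref{thm11}, both paths yield well-defined limits in $\Hom(H^{(k)}_{\sigma_0},H^{(k)}_P)$ as $T\to\infty$, equal respectively to $P_\infty(\sigma_s,P)\circ P_s^{\mathrm{H}}$ and $P_\infty(\sigma_0,P)$. Since the latter is an isomorphism by Theorem~\ref{Miso}, we may extract a limiting scalar $\lambda(s) = \lim_{T\to\infty}\lambda_T$ with the required property.

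The main obstacle is ensuring the convergence of the comparison phases $\lambda_T$ as $T\to\infty$, and in particular ensuring that the correction introduced by the connecting piece between $\sigma^{(0)}_T$ and $\sigma^{(s)}_T$ can be absorbed in this limit. This will follow from the uniform Cauchy estimates on $E(t)$ derived in the proof of Theorem~\ref{thm11}, which govern the convergence of the Hitchin parallel transport to $P_\infty$ and enable the phase comparison to pass to the limit. Once the relation above is established, substitution gives
\[
[P_s^{\mathrm{H}}s_1,P_s^{\mathrm{H}}s_2]^{(k)}_{P,\sigma_s} = |\lambda(s)|^2\,[s_1,s_2]^{(k)}_{P,\sigma_0},
\]
which is exactly the projective preservation asserted by the theorem.
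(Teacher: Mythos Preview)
Your proposal is correct and follows essentially the same approach as the paper: both reduce the statement to the identity $P_\infty(\sigma_s,P)\circ P_s^{\mathrm{H}} = \lambda(s)\, P_\infty(\sigma_0,P)$, obtained by using projective flatness of the Hitchin connection to deform the path from $\sigma_0$ to $\sigma_s$ into one that first runs out to the boundary point~$P$ along the canonical degeneration and then back. The paper's proof simply asserts this deformation (treating $P$ as an ideal endpoint of the two canonical rays), whereas you make the limiting argument explicit at finite~$T$ and then pass to the limit; one small slip is that projective flatness only gives $\lambda_T\in\bC^*$, not $U(1)$, but since only $|\lambda(s)|^2$ enters your final formula this is harmless.
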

\begin{proof}
  We consider two arbitrary complex structures $\sigma_1$ and $\sigma_2$. Parallel transport along any curve from $\sigma_1$ to $\sigma_2$ is invariant up to scale under perturbation of the curve, hence the curve can be deformed to the canonical curve from $\sigma_1$ to $P$ and composed with the reverse of the canonical curve from $\sigma_2$ to $P$ without changing the projective class of the parallel transport. But by the definition of $[\cdot,\cdot]_P^{(k)}$, the result now follows.
\end{proof}
\begin{theorem}
  \label{thm15}
  For any two pair of pants decompositions $P_1$ and $P_2$ on $\Sigma$, any complex structure on $\sigma_0$ on $\Sigma$ and any level $k$, we have that $[\cdot,\cdot]^{(k)}_{P_1}$ and $[\cdot,\cdot]^{(k)}_{P_2}$ induce the same projective unitary structure on $H^{(k)}$.
\end{theorem}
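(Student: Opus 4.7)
\medskip

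The plan is to reduce the statement to the two local cases already handled in Theorems \ref{thm12} and \ref{thm13}, and then to globalize by a factorization argument. The reduction step relies on the Hatcher--Thurston theorem, which states that the pants graph of $\Sigma$ is connected: any two pair of pants decompositions $P_1$ and $P_2$ of $\Sigma$ are connected by a finite sequence of elementary flips, each of which is supported in either a four-holed sphere subsurface (the A-move) or a one-holed torus subsurface (the S-move) cut out by the unchanged curves. Since projective equivalence of Hermitian structures is transitive, it suffices to prove that $[\cdot,\cdot]^{(k)}_{P_1,\sigma_0}$ and $[\cdot,\cdot]^{(k)}_{P_2,\sigma_0}$ are projectively equivalent in the case that $P_1$ and $P_2$ differ by a single elementary flip.

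So assume $\tilde P = P_1 \cap P_2$ is a maximal common sub-collection, and let $\Sigma_0 \subset \Sigma$ be the subsurface obtained by cutting along $\tilde P$ which contains the two distinct curves of $P_1$ and $P_2$; then $\Sigma_0$ is either a four-holed sphere or a one-holed torus. The next step is to invoke the factorization result of Section \ref{sect3}, applied to $\tilde P$: the space $H_{\tilde P, \bar\sigma_0}^{(k)}$ decomposes as a direct sum over $c \in B_k(\tilde P)$ of the spaces $H^0(\bar N^c_{\bar\sigma_0}, \calL_{c,k})$, and this direct sum is orthogonal with respect to $(\cdot, \cdot)^{(k)}_{\tilde P}$. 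Furthermore each summand factors as a tensor product, indexed by the connected components of $\Sigma \setminus \tilde P$, of the quantizations of the corresponding parabolic moduli spaces with boundary holonomies prescribed by $c$. In this decomposition, the pair of pants decomposition $P_i$, obtained from $\tilde P$ by adding one curve $\gamma_i \subset \Sigma_0$, refines only the tensor factor corresponding to $\Sigma_0$; the factors corresponding to the other components of $\Sigma \setminus \tilde P$ are already pants and contribute identical Hermitian structures to both $[\cdot,\cdot]^{(k)}_{P_1}$ and $[\cdot,\cdot]^{(k)}_{P_2}$.

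Hence, after this factorization, the comparison of $[\cdot,\cdot]^{(k)}_{P_1,\sigma_0}$ and $[\cdot,\cdot]^{(k)}_{P_2,\sigma_0}$ reduces, summand by summand in the direct sum over $c \in B_k(\tilde P)$, to comparing the Hermitian structures obtained from the two one-curve pair of pants decompositions $\{\gamma_1\}$ and $\{\gamma_2\}$ of $\Sigma_0$, viewed with the boundary parabolic structure determined by $c$. Depending on the topological type of $\Sigma_0$, this is precisely the content of Theorem \ref{thm12} (four-holed sphere) or Theorem \ref{thm13} (once-punctured torus), which give that these two local Hermitian structures are projectively equivalent. Combining this local equivalence on the $\Sigma_0$ factor with the identity on each of the remaining pants factors, and then summing over $c \in B_k(\tilde P)$, yields the claimed projective unitary equivalence of $[\cdot,\cdot]^{(k)}_{P_1,\sigma_0}$ and $[\cdot,\cdot]^{(k)}_{P_2,\sigma_0}$ on all of $H^{(k)}_{\sigma_0}$.

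The main obstacle I anticipate is the bookkeeping at the factorization step: one must verify that the limiting maps $P_\infty(\sigma_0,P_i)$ intertwine, up to an overall scalar, the passage to $H^{(k)}_{\tilde P,\bar\sigma_0}$ followed by the local completion on the $\Sigma_0$ factor, so that the global projective scalar ambiguity coming from Theorem \ref{thm14} can be absorbed uniformly in $c$. This is where one uses that the limiting map $P_\infty(\sigma_0,P_i)$ can be factored through $P_\infty(\sigma_0,\tilde P)$ by a further degeneration supported only in the $\Sigma_0$ factor, which is guaranteed by the compatibility of the iterated degenerations in Theorem \ref{thm10}, and that on each component of $\Sigma \setminus \tilde P$ outside $\Sigma_0$ the same identity map is used on both sides.
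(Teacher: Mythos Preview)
Your proposal is correct and follows essentially the same route as the paper. The paper's proof is a single line invoking Theorems \ref{thm12} and \ref{thm13}, because those theorems are already stated for full pair of pants decompositions of $\Sigma$ differing by a single elementary flip and their proofs already contain the factorization argument (tensor with the identity on the complement); the only missing ingredient is the Hatcher--Thurston connectivity of the pants graph, which the paper leaves implicit and you make explicit.
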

\begin{proof}
  This is an immediate consequence of Theorem~\ref{thm12} and \ref{thm13}.
\end{proof}

Theorem \ref{Welldefth} now follows completely similarly, the isomorphism given by parallel transport, discussed above in the proof of Theorem \ref{thm13}, is identified with the corresponding isomorphism in the Reshetikhin-Turaev TQFT, hence they take the vector corresponding to the zero label of the graph for $P_1$ to the same for $P_2$. But then we can conclude Theorem \ref{Welldefth}. More generally, we see that the handlebody vector defined in Definition \ref{HBV} under the isomorphism $I_\Sigma$ is taken to the corresponding vector in the Reshetikhin-Turaev TQFT.

But then, by combining the above, we have established Theorem \ref{HSHV}.  Having established geometric constructions for the Handlebody boundary vectors and the unitary structure as explained above, we can combine this to conclude Theorem \ref{Mtheorem}, where we will determine the constants $c^{(k)}_g$ in the following section.

Let now define a first order approximation to the boundary states $s^{(k)}_{H,P}$, where $H$ is a Handlebody whose boundary is identified with $\Sigma$ and $P$ is a pair of pants decomposition of $\Sigma$.

Consider the linear functional on $H^{(k)}_{\sigma}$ given by
\begin{align*}
	s_H(s) = \sum_{b \in B_P^{(k)}} \int_{x \in h_P^{-1}(b)} \langle s(x), s^{(k)}_{H,P}(P)(x) \rangle \Vol_{\sigma,b}(x).
\end{align*}
 Now let $s^{(k)}_{H,\sigma} \in H_\sigma^{(k)}$ be the state associated to this functional,
\begin{align*}
	(s, s^{(k)}_{H,\sigma}) = s_H(s),
\end{align*}
for all $s \in H_{\sigma}^{(k)}$.

\begin{theorem}\label{T13}
We have the following norm estimate
$$ \left| \frac{s_{H,P}^{(k)}(\sigma)}{|s_{H,P}^{(k)}(\sigma)|} - \frac{s^{(k)}_{H,\sigma}}{|s^{(k)}_{H,\sigma}|}\right| = O(1/k).$$
\end{theorem}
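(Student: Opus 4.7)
The plan is to test both vectors against an arbitrary holomorphic section $s \in H^{(k)}_\sigma$ and show that the two resulting linear functionals on $H^{(k)}_\sigma$ agree projectively up to $O(1/k)$; the Riesz representation theorem then converts this into the stated norm estimate on the unit vectors. Concretely, I will compute $(s, s^{(k)}_{H,\sigma})^{(k)}_\sigma$ and $(s, s^{(k)}_{H,P}(\sigma))^{(k)}_\sigma$ separately and compare their leading asymptotics.

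First, by the very definition of $s^{(k)}_{H,\sigma}$, we have
\[
(s, s^{(k)}_{H,\sigma})^{(k)}_\sigma \;=\; s_H(s) \;=\; \int_{h_P^{-1}(2)} \langle s, s^{(k)}_{H,P}(P) \rangle \, \Vol_{\sigma, 2},
\]
since $s^{(k)}_{H,P}(P)$ is supported on the single Bohr--Sommerfeld fiber corresponding to $0 \in L_k(P)$. For the second vector, I use Theorem \ref{HS} (the projective invariance of $[\cdot,\cdot]^{(k)}_P$ and $(\cdot,\cdot)^{(k)}_\sigma$), which gives
\[
(s, s^{(k)}_{H,P}(\sigma))^{(k)}_\sigma \;\propto\; [s, s^{(k)}_{H,P}(\sigma)]^{(k)}_{P,\sigma} \;=\; (P_\infty(\sigma, P) s,\, s^{(k)}_{H,P}(P))^{(k)}_P \;=\; \langle P_\infty(\sigma, P) s,\, s^{(k)}_{H,P}(P)\rangle(0),
\]
again because $s^{(k)}_{H,P}(P)$ is concentrated at the label $0$. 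Hence the comparison reduces to showing that the distributional pairing on the right-hand side differs from the integral above by a factor of $1 + O(1/k)$ (after normalizing by the universal constant $c^{(k)}_g$ of Theorem \ref{Mtheorem}).

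The key asymptotic step is exactly the duality already invoked in the proof of the proposition preceding this statement: degenerating the complex structure toward $F_P$ and taking $k \to \infty$ produce equivalent asymptotics after a local coordinate change, as developed in \cite{A11}. In the present setting this means that for holomorphic $s$ near the Bohr--Sommerfeld fiber $h_P^{-1}(2)$, the value $\langle P_\infty(\sigma, P) s,\, s^{(k)}_{H,P}(P)\rangle(0)$ computed from the limit parallel transport is, to leading order in $1/k$, precisely the stationary-phase evaluation of the integral $\int_{h_P^{-1}(2)}\langle s, s^{(k)}_{H,P}(P)\rangle \Vol_{\sigma,2}$ against the asymptotic Gaussian profile of a large-$k$ holomorphic section peaked on the BS fiber. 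Combined with Theorem \ref{IP}, which says $G^{(k)}_\sigma = \exp(-F_\sigma) + O(1/k)$, the transverse Gaussian integrals match those that appear in the stationary-phase expansion, and the agreement is up to $O(1/k)$.

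Putting these pieces together, the functionals $s \mapsto (s, s^{(k)}_{H,\sigma})^{(k)}_\sigma$ and $s \mapsto (s, s^{(k)}_{H,P}(\sigma))^{(k)}_\sigma$ agree projectively up to $O(1/k)$, which via Riesz representation yields the desired bound on unit vectors. The main obstacle is making the leading-order WKB/stationary-phase comparison of Step three fully rigorous in this geometric setting, in particular controlling the transverse behavior of holomorphic sections near the singular BS fiber $h_P^{-1}(2)$ (whose fiber structure was analyzed in Section \ref{Welldef}); one must verify that the reduction to local Gaussian models from \cite{A11} applies uniformly in $k$ on a neighborhood of this fiber despite its non-Lagrangian singular locus. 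I expect that the same local symplectic coordinates used in the proof of Theorem \ref{prop1} (with the matrices $Z_t, Y_t$ and the block structure reflecting $F_{\tilde P,\bar\sigma} \cap \bar F_{\tilde P,\bar\sigma}$) provide the framework for this uniform estimate.
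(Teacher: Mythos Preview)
Your approach is essentially the same as the paper's: both reduce the estimate to the analytic equivalence between degeneration in Teichm\"{u}ller space and the large-$k$ limit, deferring the core asymptotic step to \cite{A11}. The paper's own proof is in fact a single sentence to this effect, so your elaboration via the comparison of linear functionals and the Riesz representation is already more detailed than what the paper itself supplies.
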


\proof This theorem follows by the same arguments as presented in \cite{A11}, since the asymptotics in Teichm\"{u}ller space is analytically equivalent to the large $k$ asymptotics.
\eproof

Suppose now that $e^{(k)}_{\alpha,\sigma}\in H^{(k)}_\sigma$ is the coherent state associated to $\alpha\in \L$. Now define
$e^{(k)}_{H,\sigma} \in H^{(k)}_\sigma$ as follows
$$ e^{(k)}_{H,\sigma} = \int_{x \in h_P^{-1}(b)} e^{(k)}_{s^{(k)}_{H,P}(P)(x),\sigma}  \Vol_{\sigma,b}(x).$$

\begin{theorem}\label{T14}

We have the following norm estimate
$$ \left| \frac{e^{(k)}_{H,\sigma}}{|e^{(k)}_{H,\sigma}|} - \frac{s^{(k)}_{H,\sigma}}{|s^{(k)}_{H,\sigma}|}\right| = O(1/k).$$

\end{theorem}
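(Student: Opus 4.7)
The plan is to follow exactly the strategy developed by the author in \cite{A11,A12}, where very similar comparison results between coherent-state constructions and delta-type sections supported on Lagrangian leaves are established in the semiclassical limit. As already pointed out in the proof of Theorem~\ref{T13}, the asymptotics of the degeneration in Teichm\"uller space are, after a local coordinate change, analytically equivalent to the large-$k$ limit, so the core estimates from \cite{A11,A12} apply directly here.

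First I would test both $e^{(k)}_{H,\sigma}$ and $s^{(k)}_{H,\sigma}$ against an arbitrary element $s\in H^{(k)}_\sigma$ of unit norm. Using the reproducing property of the coherent states together with Fubini's theorem one rewrites
\[
(s,e^{(k)}_{H,\sigma}) = \int_{x \in h_P^{-1}(b_0)} (s, e^{(k)}_{s^{(k)}_{H,P}(P)(x),\sigma})\,\Vol_{\sigma,b_0}(x).
\]
The pairing $(s,e^{(k)}_{\alpha,\sigma})$ agrees, up to an error of order $1/k$, with $\langle s(x),\alpha\rangle$; this is the asymptotic reproducing property of coherent states in the setting of Theorem~\ref{IP}, where the Hermitian structure carries the weight $G^{(k)}_\sigma = \exp(-F_\sigma + O(1/k))$ rather than the exact Bergman weight. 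Substituting this into the integral and comparing with the defining relation $(s,s^{(k)}_{H,\sigma}) = s_H(s)$ yields
\[
(s,e^{(k)}_{H,\sigma}) - (s,s^{(k)}_{H,\sigma}) = O(1/k)
\]
uniformly in unit vectors $s$.

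The second step is to upgrade this inner product bound to the projective norm bound. For this one needs that $|e^{(k)}_{H,\sigma}|$ and $|s^{(k)}_{H,\sigma}|$ are comparable of the same semiclassical order, which follows from the stationary-phase and Gaussian concentration estimates for coherent states integrated along the Lagrangian leaf, as carried out in \cite{A11}, combined with Theorem~\ref{T13} and the triangle inequality. Writing $s = e^{(k)}_{H,\sigma}/|e^{(k)}_{H,\sigma}|$ against the difference, the normalized difference is then controlled in norm by the uniform inner product bound above.

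The main obstacle is the precise $O(1/k)$ expansion of the coherent-state reproducing property with respect to the weighted inner product of Theorem~\ref{IP}; this is the analytic heart of \cite{A11,A12}, and once that estimate is imported the remainder of the argument is formal. It is worth stressing that the justification for this import is exactly the analytic equivalence between the Teichm\"uller degeneration limit and the large-$k$ limit that was already used for Theorem~\ref{T13}, so the coherent-state analysis does not have to be redone from scratch.
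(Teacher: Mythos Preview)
Your proposal is essentially correct and follows the same route the paper indicates: the paper's own proof simply refers to the coherent-state asymptotics developed in \cite{A11} (building on \cite{A6}), and your use of the reproducing property $(s,e^{(k)}_{\alpha,\sigma}) = \langle s(\pi(\alpha)),\alpha\rangle + O(1/k)$, integrated over the leaf and compared with the defining functional $s_H$, is exactly how one fleshes out that reference. One small remark: unlike Theorem~\ref{T13}, this statement is at a \emph{fixed} $\sigma$ and is a pure large-$k$ estimate, so you do not need to invoke the analytic equivalence between Teichm\"uller degeneration and $k\to\infty$ here; the standard semiclassical expansion of coherent states from \cite{A11} suffices on its own.
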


\proof  This theorem is a simple calculation and it is proved using the same techniques as presented in \cite{A11}, where we analyzed the asymptotic $k$-behavior of coherent states following on from the techniques used in \cite{A6}.
\eproof

\section{The geometric formula for the Reshetikhin-Turaev quantum Invariants}\label{GF}

Recall that the $\SU(2)$-quantum invariant $Z_k(M)$ is defined via surgery. We briefly recall the construction and its relation with the coloured Jones polynomials.

Let $L$ be an $m$-component banded link in $S^3$. Choose a regular closed neighbourhood $U$ of $L$, consisting of $m$ disjoint solid tori $U_1, \dots, U_m$. Each of these are homeomorphic to $S^1 \times D^2$ with boundary homeomorphic to $S^1 \times S^1$. Choose homeomorphisms $h_i : S^1 \times S^1 \to S^1 \times S^1$ and form the space
\begin{align*}
  M_L = (S^3 \setminus U) \cup_{h_i} (\sqcup_{i=1}^m D^2 \times S^1)
\end{align*}
which is the disjoint union of $S^3 \setminus U$ and $m$ copies of solid tori $D^2 \times S^1$, these two spaces being identified along their common boundary $\sqcup_{i=1}^m S^1 \times S^1$ using the homeomorphisms $h_i$. The resulting topological space $M_L$ is a closed orientable manifold. The space $M_L$ constructed as such depends of course on the homeomorphisms involved in the gluing. Using the banded structure of $L$, one canonically obtains particular homeomorphisms $h_i$ depending only on $L$, and the resulting surgery is referred to as \emph{integral surgery}; in particular, one could talk about surgery along a non-banded link together with an integer called the \emph{framing}, of which there is a canonical 0-framing. Using this, we say that $M_L$ is \emph{obtained by surgery on $S^3$ along $L$}.

\begin{theorem}[Lickorish, Wallace]
  Any closed connected oriented 3-manifold can be obtained by (integral) surgery on $S^3$ along a banded link.
\end{theorem}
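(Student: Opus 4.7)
The plan is to prove the Lickorish--Wallace theorem via Heegaard splittings combined with Lickorish's theorem that the mapping class group of a closed orientable surface is generated by Dehn twists. The key idea is to realize an arbitrary Heegaard gluing as a deformation of the standard one by a product of Dehn twists, each of which can be effected by a $\pm 1$ framed surgery on $S^3$.

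First, I would invoke the existence of Heegaard splittings: every closed connected oriented $3$-manifold $M$ admits a decomposition $M = H_g \cup_{\phi} H_g'$, where $H_g, H_g'$ are standard genus $g$ handlebodies and $\phi\colon \partial H_g' \to \partial H_g$ is an orientation-reversing homeomorphism. Such a splitting exists, for instance, via a smooth triangulation (take a regular neighborhood of the $1$-skeleton as one handlebody). The $3$-sphere $S^3$ itself admits a standard Heegaard splitting $S^3 = H_g \cup_{\phi_0} H_g'$ of every genus, so the manifold $M$ is determined, up to the standard identifications, by the mapping class $[\phi \circ \phi_0^{-1}] \in \mathrm{MCG}(\Sigma_g)$.

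Next I would invoke Lickorish's theorem that $\mathrm{MCG}(\Sigma_g)$ is generated by Dehn twists $\tau_{\gamma}$ along a finite collection of simple closed curves $\gamma \subset \Sigma_g$. Writing $\phi \circ \phi_0^{-1} = \tau_{\gamma_1}^{\epsilon_1}\circ\cdots\circ\tau_{\gamma_m}^{\epsilon_m}$ with $\epsilon_i \in \{+1,-1\}$, the goal reduces to showing that each elementary factor can be absorbed into a framed surgery. For this I would use the local model of a Dehn twist near $\gamma_i$: pushing $\gamma_i$ slightly into the handlebody $H_g$ (hence into $S^3$) and equipping it with the framing coming from $\Sigma_g$, one checks that the $\epsilon_i\cdot(\text{framing})$ surgery on the resulting banded knot produces a new gluing that differs from the original by precisely $\tau_{\gamma_i}^{\epsilon_i}$. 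This is the content of the standard ``slam-dunk'' or Kirby-calculus local model, proved by excising a tubular neighborhood of $\gamma_i$, observing that the gluing on the resulting torus boundary differs from the trivial one by a single Dehn twist on the meridian, and re-gluing.

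Composing these $m$ surgeries yields a banded link $L = \gamma_1 \sqcup \cdots \sqcup \gamma_m \subset S^3$ whose surgery manifold is homeomorphic to $M$. The main obstacle in a careful write-up is the bookkeeping for the framings: one must verify that the surface framing of $\gamma_i$ (inherited from its position on $\Sigma_g$) is compatible with the $\pm 1$ framing needed to realize $\tau_{\gamma_i}^{\pm 1}$, and that successive surgeries can be performed independently because the $\gamma_i$ can be arranged to be disjoint after pushing them to slightly different levels in a collar of $\Sigma_g$. Everything else (connectedness, orientability, choice of handlebodies) is preserved automatically by the construction.
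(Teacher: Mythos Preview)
Your outline is the standard route to the Lickorish--Wallace theorem and is essentially correct. The only passage that would need care in a full write-up is the framing computation: the curve $\gamma_i$ pushed into a collar $\Sigma_g \times [0,1]$ inherits the surface framing, and one must check that cutting out a solid-torus neighborhood and regluing so as to effect $\tau_{\gamma_i}^{\epsilon_i}$ on the Heegaard surface corresponds to integral surgery with coefficient (surface framing) $\pm 1$. Your phrase ``$\epsilon_i\cdot(\text{framing})$ surgery'' is imprecise on this point, but the underlying claim is right and is a short local calculation on $S^1\times D^2$.

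As for comparison with the paper: there is nothing to compare. The paper states the Lickorish--Wallace theorem as a classical result and gives no proof; it is invoked only as background for the surgery definition of the Reshetikhin--Turaev invariant. Your argument is the usual one found in the literature (and is essentially Lickorish's original approach), so it is an appropriate proof to supply where the paper offers none.
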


Now, Kirby showed that two links give rise to the same $3$-manifold if and only if they are related by certain Kirby moves. Thus, any invariant of banded links, invariant under Kirby moves, gives rise to a $3$-manifold invariant, and this is where the coloured Jones polynomials enter the picture. Let $k \in \bbN$ and let $0 \leq n_1,\dots,n_m \leq k$ be integers (corresponding to simple modules of dimensions $n_1,\dots, n_m$ of the quantum group $U_q(\fraksl_2)$ for $q = \exp(2\pi i/k)$). Then the coloured Jones polynomial of an $m$-component link $L$, as defined e.g. using the modular category construction of \cite{T}, is a complex number $J_L(n_1,\dots,n_m) \in \bbC$, depending on $n_1,\dots,n_m$ and $k$. For the special case where $L = U$ is the unknot with framing $0$, $J_U(n) = [n]$, the $n$'th quantum integer. Now, the coloured Jones invariant of a link is \emph{not} invariant under the Kirby moves, but the average of the first $k$ coloured Jones polynomials turn out to be. Specifically, define
\begin{align*}
	A_k(L) = \sum_{0 \leq n_1, \dots, n_m \leq k} \left(\prod_{l=1}^{m} n_l\right) J_L(n_1,\dots,n_m).
\end{align*}
Then
\begin{align*}
	Z_k(L) = \Delta^{\sigma(L)}\calD^{-\sigma(L)-m-1}A_k(L)
\end{align*}
is an invariant of banded links, invariant under the Kirby moves, thus giving rise to a $3$-manifold invariant $Z_k(M_L)$, called the quantum $\SU(2)$-invariant of $M_L$. Here, $U_+$ and $U_-$ denotes the banded unknot with a positive, respectively negative, twist, and for an oriented banded link $L$, we write $\sigma(L) = \sigma_+(L) - \sigma_-(L)$, where $\sigma_+(L), \sigma_-(L)$ are the numbers of positive, respectively negative, eigenvalues of the linking matrix $\lk(L_i,L_j)$ consisting of linking numbers of the components; the linking number of a banded knot $L_i$ with itself is defined to be the linking number of its boundary knots. Furthermore, $\calD$ is the \emph{rank} of the underlying modular category, given explicitly by
\begin{align*}
	\calD = \sqrt{\sum_{n=0}^k [n+1]^2} = \sqrt{\frac{k+2}{2}}\frac{1}{\sin(\pi/(k+2))},
\end{align*}
and $\Delta$ is defined as in \cite[Ch. II, 1.6]{T},
\begin{align*}
	\Delta = \sum_{n=0}^k \exp\left((n+1)^2 \frac{2\pi i}{4k+8}\right)[n+1]^2.
\end{align*}

Note that this invariant has framing anomalies, that is, it is only well-defined up to a choice of 2-framing of the $3$-manifold (of which there always is a particular canonical choice, see \cite{At}). Changing this $2$-framing will change the invariant by a factor of the root of unity 
\begin{align*}
	\Delta \calD^{-1} = \exp(-3\pi i/(2r))\exp(3\pi i/4)
\end{align*}
to some powe.

Assume now that $X$ is given as a Heegaard splitting $X = H \cup H'$, where $H,H'$ are genus $g$ handlebodies. Then the TQFT gluing axioms imply (see \cite[Thm. IV.4.3]{T}) that
\begin{align*}
	Z_k(X) = (\calD \Delta^{-1})^m [Z_k(H'),  Z_k(H)]
\end{align*}
where here $m \in \bbZ$ is an integer depending on the decomposition $M= H\cup H'$ and which has a description in terms of Maslov indices (cf. \cite[Thm. IV.4.3]{T}). The interest of this lies in the fact that the vectors associated to the handlebodies correspond in the picture of trivalent graphs to those graphs having all edges coloured $0$.

From the discussion in the previous section we conclude that

$$ Z_k(X) = (\calD \Delta^{-1})^m  [s_{H_1,P_1}^{(k)}(\sigma),s_{H_2,P_2}^{(k)}(\sigma)]^{(k)}_{P_1,\sigma}.$$

But then by combining the results of theorem \ref{T13} and \ref{T14} together with Theorem \ref{IP}

\begin{eqnarray*}
Z_k(X) \sim && (\calD \Delta^{-1})^m \int_{x_1 \in h_{P_1}^{-1}(b_1)}  \int_{x_2 \in h_{P_2}^{-1}(b_2)} \\
&& \int_M\langle e^{(k)}_{s^{(k)}_{H_1,P_1}(P_1)(x_1),\sigma}, e^{(k)}_{s^{(k)}_{H_2,P_1}(P_2)(x_2),\sigma}\rangle \Vol_{\sigma,b}(x)\Vol_{\sigma,b}(x) e^{-F_\sigma} \frac{\omega^n}{n!}
\end{eqnarray*}
where $\sim$ mean leading order asymptotics.
But by using the usual asymptotic expansion of the coherent states on the smooth part $M'_\sigma$, we see that we can isolate the contributions from the irreducible flat connections as integrals of the form discussed in the following section. 

\section{Asymptotics of quantum invariants for 1 surgeries on nontrivial knots}
\label{1surg}

\label{sec:asympt-expans-integr}

By the above formula, we see that we need to consider the asymptotic
expansion of integrals of the form
\[
\int_M e^{i \lambda f(x)} \phi(x) \omega,
\]
where $M$ is a Riemannian manifold, $\omega$ a volume form, $\phi$ a
smooth real valued function of compact support, $f$ a real polynomial
and $\lambda$ a real parameter. We
will consider asymptotics as $\lambda \to \infty$. In our case the support of $\phi$ are
confound to very small neighbourhoods around the critical points of
$f$. Thus the integral is a sum of the contributions from each of these
patches. All in all are we led to consider integrals of the form
\[
\int_{\bR^k} e^{i \lambda f(x)} \phi(x) dx.
\]
In the situations we consider $f$ will for Gauge theoretic reasons
always be a polynomial. We cannot be certain that the critical point
will be non-degenerate, so we need to use a different method than
stationary phase approximation. We will use a method given in e.g. \cite{varchenko} to describe the asymptotics of these
oscillatory integrals with degenerate critical points.

\label{sec:asympt-expans-oscill}
Let $f$ be an analytic function of $k$ variables, $f: \bR^k \to \bR$,
that is $f(x) = \sum c_k x^k$, and let $\phi: \bR^k \to \bR$ be a
smooth compactly supported function. Then under some mild
non-degeneracy conditions on $f$ (described below) we have the
following asymptotic expansion
\begin{equation} \label{eq:varchenko}
  \int_{\bR^k} e^{i \lambda f(x)} \phi(x) dx \sim e^{i \lambda f(o)}
  \sum_{p} \sum_{n=0}^{k-1} a_{p,n}(\phi) \lambda^p (\ln \lambda)^n.
\end{equation}
Here the $p$-sum is taken over a finite number of arithmetic
progressions not depending on $\phi$, and these progressions all
consists only of negative rationals.

As we are only interested in the asymptotics of the integral, we are
mainly concerned with the largest $p$ occurring in the arithmetic
progressions and the corresponding coefficient. As we will see the
largest $p$ can easily be read of from a Taylor expansion of $f$.

Since $f$ is assumed to be analytic, expand it in a Taylor series
around $0$, $f(x) = \sum a_n x^n$. The \emph{Newton polyhedron}
$\Gamma_+(f)$ is the convex hull of $\cup_{\substack{n \in \bN^k \\
    a_n \neq 0}} \{n + \bR^k_+\}$. Along with the polyhedron we
consider the union of all the compact faces of the Newton polyhedron,
the \emph{Newton diagram} $\Gamma(f)$. The \emph{principal part} of $f$ is
$\sum_{n \in \Gamma(f)}a_n x^n$, and a principal part is said to be
non-singular with respect to $\Gamma(f)$ if for any closed face of the Newton diagram $\gamma \in
\Gamma(f)$, $f_\gamma = \sum_{n \in \gamma} a_n x^n$ has no critical points, i.e. $x_i \frac{\partial f_\gamma}{\partial
  x_i}$, $i = 1, \dots, k$ do not vanish simultaneously on $\{x \in
\bR \, | \, x_1 \cdots x_k \neq 0\}$. We will from now assume that the
phase functions have non-singular principal part with respect to their
Newton diagram.

Let $(t_0,\dots, t_0)$ be the intersection point of the Newton diagram
$\Gamma(f)$ and the diagonal $x_1 = \dots = x_k$. Let furthermore
$\tau_0$ be the smallest face of $\Gamma(f)$ that contains the above
intersection point. Lastly we define $s_0 = -1 /t_0$ and $\rho$ to be
the codimension of $\tau_0$ in $\bR^k$. 

As mentioned above \eqref{eq:varchenko} is the main theorem in
\cite{varchenko}. The theorem is proven by examining a related
integral (mentioned below), which is constructed as a meromorphic
extension and studying the singularities of this integral. By using
the Newton polyhedron they are able to determine the arithmetic
progressions and there by also the leading order term.

\begin{equation}
  \label{eq:varchenko-2}
  \sum_p \sum_{n=0}^{k-1} a_{p,n}(\phi) \lambda^p (\ln t)^n =
  \mu(\phi)t^{s_0} (\ln \lambda)^{\rho-1} +
  O(\lambda^{s_0}(\ln\lambda)^{\rho-2}).
\end{equation}
Note that the values $s_0$ and $\rho$ directly can be read off from
the Newton polyhedron.

Determining the coefficient $\mu(\phi)$ is more complicated, and the
following is a result from \cite{DNS}. If $s_0 \not\in \bZ$ the coefficient $\mu(\phi)$ can
be calculated by the following formula:
\[
  \mu(\phi) = \frac{1}{(\rho-1)!} \Gamma(-s_0)\biggl( \mu_+(\phi)
  e^{\frac{-i\pi s_o}{2}}+\mu_-(\phi)e^{\frac{i \pi s_0}{2}} \biggr).
\]
Note that since $s_0 \not\in \bZ$ $\mu(\phi) = 0$ if and only if
$\mu_{\pm}(\phi) = 0$. Here $\mu_{\pm}(\phi)$ are defined by the
positive and negative part of $f$, as follows. On $D = \{s \in \bC
\, | \, \Re(s)>0\}$ we can define the functions 
\[
I_{\pm}(s) = \int_{\bR^k} f_\pm (x)^s \phi(x) dx,
\]
where $f_+ = \max(f,0)$ and $f_-=\max(-f,0)$. $I_\pm$ can be extended
meromorphically to $\bC$ and are also denoted $I_\pm$. It turns out
that in order to understand the asymptotics of $I(\lambda)$ we must
understand the poles of $I_\pm$. Varchenko's approach showed that the
arithmetic progressions were related to understanding the singular
part of the Laurent expansion of $I_\pm$ at its poles. $\mu_\pm(\phi)$
is defined to be the coefficient the degree $-\rho$ term in the
Laurent expansion of $I_\pm$ about $s_0$:
\[
I_\pm = \frac{\mu_\pm(\phi)}{(s-s_0)^\rho} +
O\biggl(\frac{1}{(s-s_0)^{\rho-1}}\biggr) \quad \text{for } s \to s_0.
\] 
In \cite{DNS} we find the following residue formula
\begin{theorem}[\cite{DNS}]
Assume that $f$ is non-singular with respect to its Newton polyhedron,
and furthermore that $\tau_0$ is compact. When the support of $\phi$
is sufficiently small, then
\[
\mu_\pm(\phi) = k!\Vol(C) \phi(0) PV
\int_{\bR_+^{k-\rho}}f_{\tau_0}(1,\dots,1,y_{\rho+1}, \dots,
y_k)_\pm^{s_0} dy,
\]
where the principal value integral is defined as the value of the
analytic continuation at $t = 1$ of the function
\[
K_{\pm}(t) = \int_{\bR^{k-\rho}} f_{\tau_0}(1,\dots, 1, y_{\rho+1},
\dots, y_k)_\pm^{s_0 t} y^{t-1} dy,
\]
where $K_\pm(t)$ is defined for $t \in \bR_+ \setminus\{0\}$, $t s_0 >
-1$ and $t$ sufficiently small. Here $y = \Pi_{i = \rho+1}^n y_i$ and
$dy = dy_{\rho+1} \wedge \dots \wedge dy_k$.  
\end{theorem}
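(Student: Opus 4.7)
The plan is to extract the coefficient $\mu_\pm(\phi)$ from the Laurent expansion at $s = s_0$ of the meromorphic function $I_\pm(s) = \int_{\bbR^k} f_\pm(x)^s \phi(x)\, dx$ by resolving the singularity of $f$ via a toric modification adapted to the Newton polyhedron $\Gamma_+(f)$. First I would pick a regular simplicial subdivision of the dual fan of $\Gamma_+(f)$, which yields a proper birational morphism $\pi \colon Y \to \bbR^k$ such that in each toric coordinate chart the pullback $\pi^* f$ is a monomial times a non-vanishing function (this is exactly where the assumption that the principal part is non-singular with respect to $\Gamma(f)$ is used), and such that the Jacobian of $\pi$ is likewise monomial. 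Substituting into $I_\pm(s)$ rewrites it as a finite sum of chart integrals of the shape $\int u^{a(\sigma) s + b(\sigma) - 1}\, g_\sigma(u)\, du$, each of which is manifestly meromorphic in $s$ with simple poles at an arithmetic progression of negative rationals determined by the cone $\sigma$; in particular the possible pole locations are read directly off $\Gamma_+(f)$.

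The second step is to localize the residue computation to the contribution of $\tau_0$. The pole at $s = s_0 = -1/t_0$ acquires order exactly $\rho = \mathrm{codim}(\tau_0)$ because $\tau_0$ is by construction the smallest face meeting the main diagonal, and in any toric chart associated with a cone dual to a vertex in the relative interior of $\tau_0$ precisely $\rho$ of the toric monomial factors conspire to generate a simple pole at $s_0$. In such a chart I would split the coordinates as $(u_1,\ldots,u_\rho,y_{\rho+1},\ldots,y_k)$, where $u_1,\ldots,u_\rho$ are the ``radial'' variables whose integration against the monomial yields the $\rho$-fold pole, while $y_{\rho+1},\ldots,y_k$ parametrize $\tau_0$ and enter through $f_{\tau_0}(1,\ldots,1,y_{\rho+1},\ldots,y_k)$. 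Replacing $\phi$ by $\phi(0)$ plus a higher-order correction (which contributes only to strictly less singular Laurent coefficients, since $\phi$ is supported near $0$) and assembling the residues of the $u$-integrations gives the claimed formula, with the prefactor $k!\,\Vol(C)$ arising from the combinatorial sum over chambers in the dual fan incident to $\tau_0$.

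The main obstacle, and the reason for the principal value definition, is that the integral $\int_{\bbR_+^{k-\rho}} f_{\tau_0}(1,\ldots,1,y)_\pm^{s_0}\, dy$ in general diverges at infinity because $f_{\tau_0}$ is quasi-homogeneous on the cone spanned by $\tau_0$ of exactly the degree that makes the integrand fail to decay. I would circumvent this by working throughout with the auxiliary parameter $t$: the exponent $s_0$ is replaced by $s_0 t$ and the integrand is multiplied by the monomial weight $y^{t-1}$, making $K_\pm(t)$ absolutely convergent for small $t > 0$ with $ts_0 > -1$. A second application of the toric-resolution argument, now carried out in the $(k-\rho)$-dimensional $y$-space, shows that $K_\pm(t)$ continues meromorphically to a neighborhood of $t = 1$, and that its value there coincides with the Laurent coefficient computed in the previous step. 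The delicate bookkeeping is the identification of the constant $k!\,\Vol(C)$ in terms of the volume of a chamber in the dual fan adjacent to $\tau_0$, which requires tracking the Jacobians and multiplicities through the toric charts and checking that the remaining charts (those whose cones are not dual to the relative interior of $\tau_0$) contribute only to lower-order Laurent terms at $s_0$.
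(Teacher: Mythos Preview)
The paper does not prove this theorem; it is quoted verbatim from \cite{DNS} and used as a black box in Section~\ref{1surg}. There is therefore no ``paper's own proof'' to compare against. Your sketch via a toric resolution adapted to $\Gamma_+(f)$, chartwise monomialization of $f_\pm^s$ and the Jacobian, and isolation of the order-$\rho$ pole at $s_0$ from the charts dual to $\tau_0$, is the standard route and is indeed how the result is established in \cite{DNS} (building on \cite{varchenko}); the regularization by the auxiliary parameter $t$ and the identification of the constant $k!\,\Vol(C)$ are also handled there essentially as you describe.
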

$C$ is a convex hull independent of $\phi$. Hence the only way
$\mu(\phi)$ depends on $\phi$ is by its value at $0$.

From the above discussion, we conclude that if we consider a Heedgaard decomposition of a three manifold $X = H_1 \cup_\Sigma H_2$, which is obtained by doing $1$ surgery on a non-trivial knot, then the result of Kronheimer and Mrowka \cite{KM},  gives a non-trivial representation of fundamental group of $X$ to $SU(2)$, showing that the two corresponding Lagrangians for $H_1$ and $H_2$ must intersect and therefore give a non-vanishing contribution to the asymptotics of the quantum invariant, hence proving Theorem \ref{notS3}.

\end{document}